\theoremstyle{thmstyleone}%
\newtheorem{thm}{Theorem}
\newtheorem{prop}[thm]{Proposition}%
\newtheorem{cor}[thm]{Corollary}%
\newtheorem{lema}[thm]{Lemma}%
\theoremstyle{thmstyletwo}%
\newtheorem{ex}{Example}%
\newtheorem{rmk}{Remark}%
\theoremstyle{thmstylethree}%
\newtheorem{defin}{Definition}%
\def\N{\mathbb{N}}
\def\P{\mathbb{P}}
\def\K{\mathbb{K}}
\begin{document}

\title[Counting square free monomial cremona maps]{Counting square free monomial Cremona maps}


\author*[1]{\fnm{Bárbara} \sur{Costa}}\email{barbara.costasilva@ufrpe.br}

\author[2]{\fnm{Thiago} \sur{Dias}}\email{thiago.diasoliveira@ufrpe.br}

\author[3]{\fnm{Rodrigo} \sur{Gondim}}\email{rodrigo.gondim@ufrpe.br}

\author[4]{\fnm{Ricardo} \sur{Machado}}\email{ricardo.machadojunior@ufrpe.br}

\affil*[1]{\orgdiv{Departamento de Matemática}, \orgname{Universidade Federal Rural de Pernambuco}, \orgaddress{\street{Rua Dom Manuel de Medeiros}, \city{Recife}, \postcode{52171-900}, \state{Pernambuco}, \country{Brasil}}}

\affil[2]{\orgdiv{Departamento de Matemática}, \orgname{Universidade Federal Rural de Pernambuco}, \orgaddress{\street{Rua Dom Manuel de Medeiros}, \city{Recife}, \postcode{52171-900}, \state{Pernambuco}, \country{Brasil}}}

\affil[3]{\orgdiv{Departamento de Matemática}, \orgname{Universidade Federal Rural de Pernambuco}, \orgaddress{\street{Rua Dom Manuel de Medeiros}, \city{Recife}, \postcode{52171-900}, \state{Pernambuco}, \country{Brasil}}}

\affil[4]{\orgdiv{Departamento de Matemática}, \orgname{Universidade Federal Rural de Pernambuco}, \orgaddress{\street{Rua Dom Manuel de Medeiros}, \city{Recife}, \postcode{52171-900}, \state{Pernambuco}, \country{Brasil}}}


\abstract{We give a complete list of square-free monomial Cremona maps of $P^{n-1}$, with $n\leq 6$, up to equivalence classes. We also give an algorithm to count them. Using this algorithm, we obtain in addition a complete list for $n=6$.}

\keywords{Cremona maps, Monomial ideal, Birational maps, Graphs, Clutter}



\maketitle

\section{Introduction}\label{sec1}

Cremona transformations are birational automorphisms of the projective space. They were first systematically studied by L. Cremona in the 19th century, followed by Cayley and Noether. The study of the Cremona group remains a major classical topic in Algebraic Geometry. The interest in monomial Cremona transformations has gained impetus more recently (see \cite{V,GP,SV1,SV2,SV3}). 
Following the philosophy introduced in \cite{SV1,SV2} and shared by \cite{SV3,CS} we focus on the so-called ``birational combinatorics'', that is, the theory of characteristic-free rational
maps of the projective space defined by monomials, along with natural criteria for such maps to be birational. The central point of view is that the criteria must reflect the monomial data, as otherwise, one falls back in the general theory of birational maps in projective spaces. \\

Our main tool from combinatorics is graph theory and the theory of clutters naturally associated with a finite set of square-free monomials of the same degree. 
The determinantal principle of birationality proved in \cite{SV2} and stated here as Proposition \ref{prop:DPB}, is the fundamental result linking the combinatorics and the algebraic setup. We deal with two instances of monomial Cremona maps. 
The first one is quadratic Cremona monomial maps in an arbitrary number of variables. 
Here we recover \cite[Prop. 5.1]{SV2} characterizing monomial quadratic Cremona maps in a combinatorial language -- see Theorem \ref{prop:degreetwo}. The second instance is the monomial square free cubic transformations in $n \leq 6$ variables. Here we give a complete list up to equivalence classes under the action of $S^n$, the permutation group of $n$ symbols. We give an algorithm in SageMath whose input is the number of variables and the degree of the monomials that define the square-free Cremona maps. The output is the complete list of such Cremona Maps. \\

We now describe the contents of the paper in more detail. The second section highlights the combinatoric setup, the log matrix associated with a finite set of monomials having the same degree, and the associated clutter in the square-free case. We recall the determinantal principle of birationality in Proposition \ref{prop:DPB} and the 
duality principle in Proposition \ref{prop:duality}, both found in \cite{SV2}. We also present the so-called counting Lemmas \ref{processo}, \ref{isomorfas}, \ref{dual}, \ref{cone} and \ref{sequencia de grau}  which are the main tools to enumerate square free Cremona monomial maps up to equivalence under the natural action of $S_n$, the permutation group. We present also two extremal constructions of monomial Cremona transformations, Proposition \ref{prop:DLP} and Corollary \ref{cor:PRP}. These constructions allow us to recover the classification of quadratic monomial Cremona transformations in an arbitrary number of variables (see \cite[Prop. 5.1]{SV2} and Theorem \ref{prop:degreetwo}). This classification in our context has a natural combinatorial proof. 
We recall that the classification of quadro-quadric Cremona transformations in general was treated in \cite{PR,PR2}.\\

In the third section we recover the classification of square free monomial Cremona transformations in $\P^3$ and $\P^4$ (see \cite{SV2}), here, Proposition \ref{cor:p3} and Theorem \ref{cor:p4}. We prove the main result of the paper, Theorem \ref{thm:main}, that counts the number of square free monomial Cremona transformations in $\P^5$ up to equivalence classes. By the duality principle, Proposition \ref{prop:duality}, the 
hard part of the enumeration is the cubic square free monomial Cremona maps in $6$ variables, described in Proposition \ref{prop:d3n6t1}, Proposition \ref{prop:d3n6t2} and Proposition \ref{prop:d3n6t3}. \\

The fourth section is about an algorithm in SageMath that outputs the number of monomial square-free Cremona maps of degree $d$ in $n$ variables. We run the algorithm for $d \leq 3$ and $n \leq 7$ which give us the number of monomial square free Cremona maps in $\P^{n-1}$ for $n \leq 7$. 

\section{Combinatorics}

\subsection{The combinatoric setup}

Let $\K$ be a field and $n \geq 2$  let $ \K[x_1,\ldots,x_n]$ be the polynomial ring. For $v = (a_1,\ldots,a_n) \in \N^n$ we denote by 
$\underline{x}^{v}=x_1^{a_1}\ldots x_n^{a_n}$ the associated monomial and by $d=\mid v\mid=a_1+\ldots+a_n$ its degree. The vector $v$ is called the {\em log vector} of the monomial.

\begin{defin}
\sloppy{ Let $F=\{f_1,\ldots,f_n\}$ be an ordered set of monomials $f_j \in \K[x_1,\ldots,x_n]$, to which can associate the log vectors
$$v_j=(v_{1j}, \ldots,v_{nj})$$
where $\underline{x}^{v_j}=f_j$. The {\em log matrix} associated to $F$ is the matrix $A_F=(v_{ij})_{n \times n}$, whose columns are the (transpose of the) log vectors of $f_j$. 
If all the monomials have the same degree $d \geq 2$, which is our interest, then the log matrix is called $d$-stochastic.}
\end{defin}

The monomial $f_j$ is called square free if for all $x_i$, $i=1,\ldots,n$, we have $x_i^2 \nmid f_j$. The set $F$ is called square free if all its monomials are square free.\\

\sloppy{Let $F=\{f_1,\ldots,f_n\}$ be a set of monomials of same degree $d$ with $f_i \in \K[x_1,\ldots,x_n]$ for $i=1,\ldots,n$. $F$ defines a rational map: 
$$\varphi_F: \P^{n-1} \dashrightarrow \P^{n-1}$$
given by $\varphi_F(\underline{x})=(f_1(\underline{x}):\ldots,f_n(\underline{x}))$, where $\underline{x}=x_1, x_2, \ldots, x_n$.}
\\

The following definition has an algebro-geometric flavor, including an algebraic notion of birationality. It is advantageous in this context. For more details see \cite{SV1, SV2, CS}. 
An ordered set $F$ of $n$ monomials of the same degree $d$ is a Cremona set if the map $\varphi_F$ is a Cremona transformation.

\begin{defin} \label{defin:cremonaset} Let $\K[\underline{x}]=\K[x_1,\ldots,x_n]$. 
Let $\K[\underline{x}_d]$ be the Veronese algebra generated by all monomials of degree $d$.
Let $F$ be a set of monomials of same degree $d$. $F$ is a Cremona set if the extension $\K[f_1,\ldots,f_n] \subset \K[\underline{x}_d]$ 
becomes an equality at the level of fields of fractions.  
\end{defin}

We are interested in discussing whether the rational map $\varphi_F$ is birational. Hence we assume the following restrictions on the set $F$. 

\begin{defin} \label{defin:restricoescanonocas} 
We say that a set $F = \{f_1,\ldots,f_n\}$ of monomials $f_i \in \K[x_1,\ldots,x_n]$ of same degree satisfies the canonical restrictions if:
\begin{enumerate}
 \item For each $j=1, \ldots ,n$ there is a $k$ such that $x_j \mid f_k$;
 \item For each $j=1,\ldots,n$ there is a $k$ such that $x_j \nmid f_k$.
\end{enumerate}
\end{defin}

\begin{defin}
Let $\sigma \in S_n$ be a permutation of $n$ letters. For each monomial 
$f = \underline{x}^{v}$, with $v \in \N^n$, we denote $f_{\sigma}=\underline{x}^{\sigma(v)}$. 
For a finite set of monomials $F$ and $\sigma \in S_n$ we denote $F_{\sigma}=\{f_{\sigma}\mid f \in F\}$.  
 Two Cremona sets $F,F' \subset \K[x_1,\ldots,x_n]$ are said to be equivalent if there is $\sigma \in S_n$ such that $F'=F_{\sigma}$.
\end{defin}

\begin{rmk}\rm
Notice that we tacitly suppose that the order of the elements in $F$ is irrelevant. An ordered set of polynomials gives a Cremona map, but the property of being a Cremona transformation is invariant under permutations. One can see this by the algebraic definition of Cremona set, Definition \ref{defin:cremonaset}. An isomorphism between Cremona monomial maps is a relabel of the set of variables and a reorder of the forms.  
\end{rmk}

We make systematic use of the following Determinantal Principle of Birationality (DPB for short) due to Simis and Villarreal, see \cite[Prop. 1.2]{SV1}.

\begin{prop}\cite{SV1}\label{prop:DPB} {\bf(Determinantal Principle of Birationality (DPB))} Let $F$ be a finite set of monomials
of the same degree $d$ and let $A_F$ be its log matrix. Then $F$ is a Cremona set if and only if $\det A_F = \pm d$.
\end{prop}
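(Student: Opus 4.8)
The plan is to translate the condition of Definition~\ref{defin:cremonaset} into a statement about the index of one sublattice of $\Z^n$ inside another, exploiting that a map defined by monomials is, on the open torus of $\P^{n-1}$, a homomorphism of algebraic tori, so that birationality is governed by linear algebra over $\Z$ applied to $A_F$.

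First I would reformulate both algebras as monomial algebras attached to submonoids of $\N^n$. For a submonoid $S\subseteq\N^n$ let $\K[S]\subseteq\K[\underline{x}]$ be the span of the monomials $\underline{x}^s$, $s\in S$; since $\underline{x}^{s-s'}=\underline{x}^s/\underline{x}^{s'}$, passing to fraction fields replaces $S$ by the subgroup $\langle S\rangle\subseteq\Z^n$ it generates. Now $\K[f_1,\dots,f_n]=\K[S_F]$, where $S_F$ is the monoid generated by the log vectors $v_1,\dots,v_n$, so $\mathrm{Frac}(\K[f_1,\dots,f_n])=\mathrm{Frac}(\K[\Gamma])$ with $\Gamma:=\Z v_1+\dots+\Z v_n$, the subgroup of $\Z^n$ spanned by the columns of $A_F$. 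Likewise, every monomial whose degree is a multiple of $d$ is a product of monomials of degree $d$, so $\K[\underline{x}_d]=\K[S_G]$ with $S_G=\{w\in\N^n:\ d\mid |w|\}$, whence $\mathrm{Frac}(\K[\underline{x}_d])=\mathrm{Frac}(\K[G])$ with $G:=\{w\in\Z^n:\ d\mid |w|\}$. Thus, by Definition~\ref{defin:cremonaset}, $F$ is a Cremona set if and only if $\mathrm{Frac}(\K[\Gamma])=\mathrm{Frac}(\K[G])$, where $\Gamma\subseteq G$ since $|v_j|=d$ for all $j$.

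Next I would isolate the numerical content. Let $L_0=\{a\in\Z^n:|a|=0\}$, a free lattice of rank $n-1$ with basis $e_j-e_1$. Because each column of $A_F$ has entry-sum $d$ one has $|A_Fa|=d\,|a|$; hence $A_F$ preserves $L_0$ and acts on $\Z^n/L_0\cong\Z$ by multiplication by $d$, so $\det A_F=\pm\,d\cdot\det(A_F|_{L_0})$ (in particular $d\mid\det A_F$, as one also sees by adding all rows of $A_F$ to the first). Moreover $\Im(A_F|_{L_0})$ is the lattice $\Lambda:=\langle v_j-v_1:\ 2\le j\le n\rangle$, since $A_F(e_j-e_1)=v_j-v_1$. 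As $|v_1|=d\neq0$ we have $\Z v_1\cap L_0=0$, which yields the splittings $G=L_0\oplus\Z v_1$ and $\Gamma=\Lambda\oplus\Z v_1$; consequently $[G:\Gamma]=[L_0:\Lambda]=|\det(A_F|_{L_0})|=|\det A_F|/d$ when $\det A_F\neq0$, while $\Gamma$ has rank $\le n-1<\rk G$ when $\det A_F=0$. So it remains to show that $\mathrm{Frac}(\K[\Gamma])=\mathrm{Frac}(\K[G])$ exactly when $[G:\Gamma]=1$; combined with the above this reads $F$ is a Cremona set $\iff|\det A_F|=d\iff\det A_F=\pm d$.

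This last equivalence is the step I expect to be the real obstacle, because it must hold in every characteristic, including when $[G:\Gamma]$ is a power of $\mathrm{char}\,\K$ (where the induced field extension is purely inseparable but still a proper extension, so one cannot argue by counting points in a generic fibre). I would prove it directly: if $\Gamma$ has infinite index in $G$ its rank is strictly smaller, so $\mathrm{Frac}(\K[\Gamma])$ and $\mathrm{Frac}(\K[G])$ have different transcendence degrees; if $[G:\Gamma]=m<\infty$, pick coset representatives $w_1,\dots,w_m$ of $\Gamma$ in $G$, so that $\K[G]=\bigoplus_{i=1}^m\underline{x}^{w_i}\,\K[\Gamma]$ is a free $\K[\Gamma]$-module of rank $m$, and observe that if $\K[G]\subseteq\mathrm{Frac}(\K[\Gamma])$ then localizing at $\K[\Gamma]\setminus\{0\}$ produces a ring between $\K[\Gamma]$ and the field $\mathrm{Frac}(\K[\Gamma])$ that is simultaneously a $\mathrm{Frac}(\K[\Gamma])$-vector space of dimension $m$ (base change of a free module) and of dimension at most $1$ (being contained in that field), forcing $m=1$, i.e. $\Gamma=G$. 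Geometrically this is simply the statement that $\varphi_F$ induces on the character lattice $L_0$ of the torus $(\K^*)^{n-1}\subset\P^{n-1}$ the endomorphism $A_F|_{L_0}$, which defines a birational self-map iff it lies in $\mathrm{GL}(L_0)$, i.e. iff $\det(A_F|_{L_0})=\pm1$, which by the factorization above is iff $\det A_F=\pm d$.
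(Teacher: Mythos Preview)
The paper does not supply its own proof of this proposition: it is quoted from \cite{SV1} (Proposition~1.2 there) and used as a black box throughout. So there is nothing in the present paper to compare your argument against.

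That said, your argument is correct and is essentially the standard one: pass from the semigroup algebras $\K[f_1,\dots,f_n]\subset\K[\underline{x}_d]$ to the group algebras $\K[\Gamma]\subset\K[G]$ at the level of fraction fields, identify $[G:\Gamma]$ with $|\det A_F|/d$ via the block-triangular structure coming from the $d$-stochastic condition, and then show $\mathrm{Frac}(\K[\Gamma])=\mathrm{Frac}(\K[G])\iff\Gamma=G$. Your free-module argument for this last step is clean and, as you note, sidesteps separability issues in positive characteristic. One cosmetic remark: the sign in $\det A_F=\pm\,d\cdot\det(A_F|_{L_0})$ is superfluous, since the determinant of the endomorphism $A_F|_{L_0}$ is basis-independent and the block-triangular decomposition gives $\det A_F=d\cdot\det(A_F|_{L_0})$ on the nose; this of course does not affect the conclusion.
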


We can associate to each set of square free monomials, $F \subset \K[x_1, \ldots, x_n]$, a combinatoric structure 
called clutter, also known as the Sperner family, see \cite[Chapter~6]{V} for more details. 

\begin{defin}
A clutter $S$ is a pair $S=(V,E)$ consisting of a finite set, the vertex set $V$, and a set of subsets of $V$, the edge set $E$. The cardinality of $S$ is given by $|E|$. The edge set is characterized by the property that that is no edge contained in another one.
We say that a clutter $S$ is a $d$-clutter if all the edges have the same cardinality $d$. 
\end{defin}

Let $F=\{f_1,\ldots,f_n\}$ be a set of square free monomials of same degree $d$ with $f_i \in \K[x_1,\ldots,x_n]$ and let $A=(v_{ij})$ be its log matrix. We define the clutter $S_F=(V,E)$ 
in the following way, $V = \{x_1, . . . , x_n\}$ is the vertex set, and $E=\{e_1,\ldots,e_n\}$ where 
$e_i=\{x_j, \ j \in \{1,\ldots,n\}\mid v_{ij}=1\}$. Notice that all the edges have the same cardinality, $\mid e\mid =d$, hence $S$ is a $d$-clutter. 
There is a bijective correspondence between $d$-clutters and sets of square-free monomials of degree $d$. In the present work, we deal only with $d$-clutters, here called, for short, clutter instead of to say $d$-clutter. 
Furthermore, all the clutter considered has the same number of vertex and edges. 

\begin{ex}\rm
 A simple graph $G=(V,E)$ is a $2$-clutter. Note that we can use a simple graph representing a square-free Cremona monomial map of degree two. If the set $F$ of monomials of degree two also contains some squares, they can be represented as loops in the graph. Hence a set of monomials of degree two can always be represented as a graph.  
\end{ex}

\begin{defin}

\begin{enumerate}
\item A subclutter $S'$ of a clutter $S=(V,E)$ is a clutter $S'=(V',E')$ where $V' \subset V$ and $E' \subset E$. Deleting an vertex $v$ of a clutter $S=(V,E)$ we obtain a subclutter $S \setminus v=(V',E')$ where $V'=V \setminus v$ and $E' = \{e \in E\mid v \not \in e\}$. 
\item  A clutter $C=(V,E)$  is called a cone 
 if there is a vertex $v \in V$ such that $v \in e$ for all $e \in E$.  The base of $C$ is the clutter $B_C=(V',E')$ with $V'=V\setminus v$ and $E'=\{e \setminus v, \forall e \in E\}$.
 \item A subclutter $C$ of a clutter $S$ is called maximal cone of $S$ if $C$ is a cone with maximal cardinality.

\end{enumerate}
 
 \end{defin}

 We recall a combinatoric notion of duality (see  \cite{SV2}, \cite{BARBARAARON} and \cite{DORIAARON}). 
 
 \begin{defin}
 Let $F$ be a set of square free monomials of the same degree $d$ with log-matrix $A_F = (v_{ij})_{n \times n}$, its dual complement is the set 
 $F^{\vee}$ of monomials whose log-matrix is $A_{F^{\vee}} = (1 - v_{ij})_{n \times n}$. From the clutter point of view, if $S_F=(V,E)$, then 
 the dual complement clutter (dual clutter for short), is the clutter $S^{\vee}:=S_{F^{\vee}}=(V,E^{\vee})$ where $E^{\vee}=\{V\setminus e\mid e \in E\}$. 
 \end{defin}

 The following basic principle is instrumental in the classification. For the proof, see \cite[Proposition 5.4]{SV2}.

\begin{prop}\cite{SV2} \label{prop:duality}{\bf(Duality Principle)} Let $F$ be a set of square free monomials in $n$ variables, of the same
degree $d$, satisfying the canonical restrictions. Then $F$ is a Cremona set if and only if $F^{\vee}$ is a Cremona set.
 \end{prop}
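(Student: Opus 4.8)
The plan is to reduce the statement, via the Determinantal Principle of Birationality, to a single identity relating $\det A_F$ and $\det A_{F^{\vee}}$. Write $A=A_F$ and $\mathbf{1}=(1,\dots,1)^{\mathsf{T}}\in\R^{n}$. Since $F$ consists of square-free monomials of degree $d$, the matrix $A$ has entries in $\{0,1\}$ and all column sums equal to $d$; that is, $A$ is $d$-stochastic, which we read as $\mathbf{1}^{\mathsf{T}}A=d\,\mathbf{1}^{\mathsf{T}}$. By definition $A_{F^{\vee}}=J-A$, where $J=\mathbf{1}\mathbf{1}^{\mathsf{T}}$ is the all-ones matrix; hence $A_{F^{\vee}}$ has entries in $\{0,1\}$ and column sums $n-d$, so $F^{\vee}$ consists of square-free monomials of degree $d':=n-d$. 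Reading the two canonical restrictions of Definition \ref{defin:restricoescanonocas} as statements about the rows of $A$ (every row has a $1$, every row has a $0$) shows at once that $1\le d\le n-1$, that $1\le d'\le n-1$, and that $F^{\vee}$ again satisfies the canonical restrictions; moreover $(F^{\vee})^{\vee}=F$ since $1-(1-v_{ij})=v_{ij}$. By Proposition \ref{prop:DPB}, it therefore suffices to prove, for every $d$-stochastic matrix $A$,
\begin{equation}\label{eq:dualitydet}
\det(J-A)=(-1)^{n-1}\,\frac{n-d}{d}\,\det A .
\end{equation}
Indeed, granting \eqref{eq:dualitydet}, if $\det A_F=\pm d$ then $\det A_{F^{\vee}}=\det(J-A)=\pm(n-d)=\pm d'$, so $F^{\vee}$ is a Cremona set; and the reverse implication follows by applying this to $F^{\vee}$ in place of $F$, using $(F^{\vee})^{\vee}=F$ and $n-d'=d$.

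To prove \eqref{eq:dualitydet} I would work with the adjugate matrix $\operatorname{adj}(A)$, which satisfies $A\operatorname{adj}(A)=\operatorname{adj}(A)A=(\det A)\,I$ for \emph{every} square matrix, so that no invertibility hypothesis is needed. Left-multiplying $A\operatorname{adj}(A)=(\det A)I$ by $\mathbf{1}^{\mathsf{T}}$ and using $\mathbf{1}^{\mathsf{T}}A=d\,\mathbf{1}^{\mathsf{T}}$ gives $d\,\mathbf{1}^{\mathsf{T}}\operatorname{adj}(A)=(\det A)\,\mathbf{1}^{\mathsf{T}}$, hence $\mathbf{1}^{\mathsf{T}}\operatorname{adj}(A)=\tfrac{1}{d}(\det A)\,\mathbf{1}^{\mathsf{T}}$ and so $\mathbf{1}^{\mathsf{T}}\operatorname{adj}(A)\,\mathbf{1}=\tfrac{n}{d}\det A$. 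Now apply the rank-one update identity $\det(M+uv^{\mathsf{T}})=\det M+v^{\mathsf{T}}\operatorname{adj}(M)\,u$, valid for any $n\times n$ matrix $M$ and vectors $u,v$, with $M=-A$ and $u=v=\mathbf{1}$; since $\det(-A)=(-1)^{n}\det A$ and $\operatorname{adj}(-A)=(-1)^{n-1}\operatorname{adj}(A)$, we obtain
\[
\det(J-A)=\det\bigl(-A+\mathbf{1}\mathbf{1}^{\mathsf{T}}\bigr)=(-1)^{n}\det A+(-1)^{n-1}\tfrac{n}{d}\det A=(-1)^{n-1}\det A\Bigl(\tfrac{n}{d}-1\Bigr),
\]
which is exactly \eqref{eq:dualitydet}. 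If one prefers to avoid the adjugate, it is enough to treat the case $\det A\neq 0$: then $\mathbf{1}$ is a left eigenvector of $A^{-1}$ with eigenvalue $1/d$, and \eqref{eq:dualitydet} drops out of the Sherman--Morrison formula; this suffices because in the argument the identity is only ever used when one of $A$, $J-A$ is invertible.

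I do not expect a genuine obstacle here: the one observation that makes everything work is that $d$-stochasticity of $A$ is precisely the statement that $\mathbf{1}$ is a left eigenvector of $A$ with eigenvalue $d$, which pins down $\mathbf{1}^{\mathsf{T}}\operatorname{adj}(A)$ and turns the passage from $A$ to $J-A=-A+\mathbf{1}\mathbf{1}^{\mathsf{T}}$ into an explicit rank-one perturbation. The remaining ingredients — that $A_{F^{\vee}}=J-A$ is $(n-d)$-stochastic, that $F^{\vee}$ inherits the canonical restrictions, and that $(F^{\vee})^{\vee}=F$ — are immediate from the definitions, and the sign $(-1)^{n-1}$ in \eqref{eq:dualitydet} is irrelevant since the DPB detects the determinant only up to sign.
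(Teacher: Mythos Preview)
Your argument is correct. Note, however, that the paper itself does not prove this proposition: it simply refers the reader to \cite[Proposition~5.4]{SV2}, so there is no in-paper proof to compare against. Your route --- reducing the statement via the DPB (Proposition~\ref{prop:DPB}) to the determinant identity $\det(J-A)=(-1)^{n-1}\tfrac{n-d}{d}\det A$ for a $d$-stochastic $0/1$ matrix $A$, and deriving that identity from the rank-one update formula $\det(M+uv^{\mathsf T})=\det M+v^{\mathsf T}\operatorname{adj}(M)\,u$ together with the left-eigenvector relation $\mathbf{1}^{\mathsf T}A=d\,\mathbf{1}^{\mathsf T}$ --- is the natural one and matches how this duality is handled in the cited literature.
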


\subsection{Two extremal principles and its consequences}

Consider a set $F=\{f_1,\ldots,f_n\}$ of monomials $f_i \in \K[x_1,\ldots,x_n]$, of same degree $d$ satisfying the canonical restrictions. The log matrix of $F$, $A_F$ is a $d$-stochastic matrix since 
the sum along every column is equal to $d$. The incidence degree of $x_i$ in $F$, denoted by $a_i$, is the number of monomials $f$ in $F$ such that $x_i\mid f$. By double counting in the log matrix, we have the following incidence equation:
\begin{equation}\label{eq:incidence}
a_1+\ldots+a_n=nd,
\end{equation}
here $1 \leq a_i \leq n-1$ for all $i=1,\ldots,n-1$.\\

Let $F=\{f_1,\ldots,f_n\}$ be a set of square free monomials of same degree $d$. Suppose that there is a variable $x_i$ such that the incidence degree in $F$ is 1. This extremal condition means that the vertex $x_i$ belongs to only one facet of the associated clutter $S(V,E)$ of $F$.  
This observation  suggests the following definition  inspired by a similar notion on Graph theory.

\begin{defin}
 Let $S=(V,E)$ be a clutter. A leaf in $S$ is a vertex $v \in V$ with incidence degree $1$. Let $e$ be the only edge containing $v$. 
 Then deleting the leaf $v$ we obtain a subclutter $S\setminus v = (V\setminus v,E \setminus e)$.
\end{defin}

The following result allows us to delete leaves of the clutters associated with a square-free Cremona set to obtain another square-free Cremona set on a smaller ambient space.  Similarly, we can attach leaves to square-free Cremona sets. 

\begin{prop} \label{prop:DLP} {\bf (Deleting Leaves principle (DLP))}
 Let $F=\{f_1,\ldots,f_n\}$ be a set of monomials of same degree $d$, with $f_i \in \K[x_1,\ldots,x_n]$ for $i=1,\ldots,n$, satisfying the canonical restrictions. 
  Suppose that $x_n\mid f_n$, $x_n^2\nmid f_n$, and $x_n \nmid f_i$ for $i = 1,\ldots,n-1$. 
 Let $F' = \{f_1,\ldots,f_{n-1}\}$ be considered as a set of monomials of degree $d$ in $\K[x_1,\ldots,x_{n-1}]$.
 \begin{enumerate}
  \item If $F'$ does not satisfy the canonical restrictions, then $F$ is not a Cremona set.
  \item If $F'$ satisfies the canonical restrictions, then $F$ is a Cremona set if and only if $F'$ is a Cremona set.
 \end{enumerate}
\end{prop}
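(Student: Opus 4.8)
The plan is to reduce everything to the Determinantal Principle of Birationality (Proposition~\ref{prop:DPB}) by a single cofactor expansion. Since $x_n\nmid f_i$ for $i=1,\dots,n-1$, each of $f_1,\dots,f_{n-1}$ lies in $\K[x_1,\dots,x_{n-1}]$, so $F'$ really is a set of $n-1$ monomials of degree $d$ in $n-1$ variables and has a genuine $(n-1)\times(n-1)$ log matrix $A_{F'}$. The three hypotheses on $f_n$ say precisely that the $n$-th row of the $n\times n$ matrix $A_F$ is $(0,\dots,0,1)$. Expanding $\det A_F$ along that row kills every term except the one attached to the $(n,n)$-entry; since that entry is $1$, what remains is the minor obtained by deleting the last row and the last column of $A_F$, which is exactly $A_{F'}$. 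Hence $\det A_F=\det A_{F'}$. Applying Proposition~\ref{prop:DPB} to $F$ and to $F'$ then yields at once: $F$ is a Cremona set if and only if $\det A_F=\pm d$, if and only if $\det A_{F'}=\pm d$, if and only if $F'$ is a Cremona set. This is assertion~(2), and it reduces assertion~(1) to the claim that a set $F'$ of square-free monomials failing the canonical restrictions is not a Cremona set, i.e.\ that such an $A_{F'}$ is singular.

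To prove this claim I would argue by cases. If condition~(1) of Definition~\ref{defin:restricoescanonocas} fails for $F'$, then some variable divides none of $f_1,\dots,f_{n-1}$, so $A_{F'}$ has a zero row and $\det A_{F'}=0$. If condition~(2) fails, then some variable $x_j$ divides every $f_i$ with $i<n$; here I use square-freeness to write $f_i=x_jh_i$ with $h_i$ square-free of degree exactly $d-1$ and involving only the $n-2$ variables other than $x_j$. The log vectors $w_1,\dots,w_{n-1}$ of $h_1,\dots,h_{n-1}$ then all lie on the affine hyperplane $\{w\in\R^{n-2}\mid\sum_k w_k=d-1\}$, an affine space of dimension $n-3$; since $n-1>(n-3)+1$ they are affinely dependent, so there are scalars $c_i$, not all zero, with $\sum_i c_i=0$ and $\sum_i c_iw_i=0$. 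Writing $u_i$ for the $i$-th column of $A_{F'}$, i.e.\ the log vector of $x_jh_i$, we have $u_i=e_j+w_i$ with $e_j$ the $j$-th standard basis vector, hence $\sum_i c_iu_i=(\sum_i c_i)e_j+\sum_i c_iw_i=0$; thus the columns of $A_{F'}$ are linearly dependent and again $\det A_{F'}=0$. In either case $\det A_{F'}\neq\pm d$, so $F'$, and hence $F$, is not a Cremona set. (Geometrically, in the second case $\varphi_{F'}=(h_1:\cdots:h_{n-1})$ factors through the linear projection away from $x_j$, so its image has dimension at most $n-3<n-2$ and it is not dominant.)

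The two determinant remarks in the first paragraph are essentially free and assertion~(2) is then immediate; the step I expect to be the main obstacle is the failure-of-condition-(2) case of assertion~(1), and it is also the one place where square-freeness of $F$ (hence of $F'$) is genuinely used — for arbitrary monomials one may have to factor $x_j$ out with unequal multiplicities, the degree count collapses, and the conclusion can fail. Note that for $n=2$ the leaf hypothesis is degenerate, so one may assume $n\geq 3$ in this last case.
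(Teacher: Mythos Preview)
Your proof is correct and follows essentially the same route as the paper: expand $\det A_F$ along the last row (your observation that the sign is exactly $+1$, not merely $\pm 1$, is sharper than the paper's) and then invoke DPB for both directions.

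The one place you diverge is the sub-case of assertion~(1) where condition~(2) of the canonical restrictions fails for $F'$. The paper disposes of this more directly: if $x_j\mid f_i$ for all $i<n$ and $F'$ is square-free, then the $j$-th row of $A_{F'}$ is the all-ones vector; since each column sums to $d$, the sum of \emph{all} rows is $(d,\dots,d)=d$ times the $j$-th row, so the rows are linearly dependent and $\det A_{F'}=0$. Your affine-dependence argument reaches the same conclusion but through a longer path. On the other hand, your version has the merit of making explicit exactly where and why square-freeness enters --- a hypothesis the paper's statement omits but its proof silently uses --- and your closing remark that the conclusion can genuinely fail for non-square-free $F$ is correct and worth retaining.
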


\begin{proof} Let $A_F$ be the log matrix of $F$. Computing the determinant $\det A_F$ by Laplace's rule on the last row it is easy to see that:
$$\det A_F = \pm \det A_{F'}.$$
\begin{enumerate}
 \item If $F'$ does not satisfy the canonical restrictions, then $\det A_{F'}=0$. Indeed, if there is a $x_j$ for some $j=1,\ldots,n-1$ such that $x_j \nmid f_i$ 
 for all $i=1,\ldots,n-1$, then the $j$-th row of $A_{F'}$ is null and $\det A_{F'}=0$. On the other side, if  there is a $x_j$ for some $j=1,\ldots,n-1$ 
 such that $x_j \mid f_i$ 
 for all $i=1,\ldots,n-1$, then the $j$-th row of $A_{F'}$ has all entries equal to $1$. Since $A_{F'}$ is $d$-stochastic in the columns, replace the first row
 for the sum of all the rows except the $j$-th give us a row with all entries $d-1$, hence $\det A_{F'}=0$.  
  \item If $F'$ satisfies the canonical restrictions, then by the Determinantal Principle of Birationality, Proposition \ref{prop:DPB}, $F$ is a Cremona set if and only if $\det A_F=d$, since 
  $\det A_F = \pm \det A_{F'}$, the result follows. 
\end{enumerate}
\end{proof}

In the opposite direction, suppose that $F$ has a variable whose incidence degree is $n-1$. Geometrically it is the maximal possible cone on the clutter since $F$ satisfies the canonical restrictions. We want to focus on the base of this cone.

\begin{defin}
 Let $S=(V,E)$ be a clutter with $|E|=n$ we say that $v \in V$ is a root if it has incidence degree $n-1$. 
 We define a new clutter $S/v=(\tilde{V},\tilde{E})$ pucking the root $v$, where $\tilde{V}=V \setminus v$ and $\tilde{E}=\{e \setminus v\mid e \in E, v \in e\}$. 
 It is easy to see that $S/v=(S^{\vee}\setminus v)^{\vee}$. 
 \end{defin}

\begin{cor} \label{cor:PRP} {\bf (Plucking Roots Principle (PRP))}
 Let $F=\{f_1,\ldots,f_n\}$ be a set of square free monomials of same degree $d$, with $f_i \in \K[x_1,\ldots,x_n]$ for $i=1,\ldots,n$, satisfying the canonical restrictions. 
  Suppose that $f_i=x_ng_i$, for $i = 1,\ldots,n-1$ and $x_n \nmid f_n$. 
 Let $\tilde{F} = \{g_1,\ldots,g_{n-1}\}$ be considered as a set of square free monomials of degree $d-1$ in $\K[x_1,\ldots,x_{n-1}]$. Then $F$ is a Cremona set if and only if $\tilde{F}$ is a Cremona set.
  In particular, if  $F$ is a Cremona set, then $\tilde{F}$ satisfies the canonical restrictions.

\end{cor}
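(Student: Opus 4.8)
The plan is to reduce the Plucking Roots Principle to the Deleting Leaves Principle via the combinatorial duality already established. First I would observe that the hypothesis ``$f_i = x_n g_i$ for $i=1,\dots,n-1$ and $x_n \nmid f_n$'' says exactly that $x_n$ is a root of the clutter $S_F$, i.e.\ it has incidence degree $n-1$. Passing to the dual complement $F^{\vee}$, the variable $x_n$ now has incidence degree $1$ in $S_{F^{\vee}}$: the unique edge of $S_{F^{\vee}}$ containing $x_n$ is $V \setminus e_n$, where $e_n$ is the edge of $S_F$ coming from $f_n$ (the one not containing $x_n$). So $x_n$ is a leaf of $S_{F^{\vee}}$, and the monomials of $F^{\vee}$ other than the one corresponding to $f_n$ are divisible by $x_n$ to the first power only, while that distinguished monomial is not divisible by $x_n$. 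This is precisely the setup of Proposition~\ref{prop:DLP} applied to $F^{\vee}$.

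Next I would run the Deleting Leaves Principle on $F^{\vee}$. Deleting the leaf $x_n$ from $S_{F^{\vee}}$ yields a clutter on $n-1$ vertices with $n-1$ edges, which (via the bijection between $(d')$-clutters and square-free monomial sets) corresponds to a set $G$ of square-free monomials of degree $d$ in $\K[x_1,\dots,x_{n-1}]$. By Proposition~\ref{prop:DLP}, either $G$ fails the canonical restrictions --- in which case $F^{\vee}$, hence $F$ by the Duality Principle (Proposition~\ref{prop:duality}), is not a Cremona set --- or $G$ satisfies them and $F^{\vee}$ is a Cremona set iff $G$ is. Then I would identify $G$ explicitly: since $S_{F^{\vee}} \setminus x_n$ consists of the edges of $S_{F^{\vee}}$ not containing $x_n$, and the only such edge is $V \setminus e_n$, one sees after unwinding the definitions that $(S_{F^{\vee}} \setminus x_n)^{\vee}$ (dual complement now taken inside $\K[x_1,\dots,x_{n-1}]$) is exactly $S_{\tilde F}$ where $\tilde F = \{g_1,\dots,g_{n-1}\}$; this is essentially the remark $S/v = (S^{\vee}\setminus v)^{\vee}$ stated in the definition of plucking a root. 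Applying the Duality Principle once more in $n-1$ variables, $G$ is a Cremona set iff $\tilde F$ is.

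Stringing the equivalences together: $F$ is a Cremona set $\iff F^{\vee}$ is $\iff$ ($G$ satisfies the canonical restrictions and $G$ is a Cremona set) $\iff \tilde F$ is a Cremona set; and in the degenerate case, failure of the canonical restrictions for $G$ (equivalently for $\tilde F$) forces $F$ not to be Cremona. The ``in particular'' clause then follows: if $F$ is a Cremona set, then $\tilde F$ must be a Cremona set, so in particular it satisfies the canonical restrictions --- indeed, a Cremona set automatically satisfies restriction~(1) because $\det A_{\tilde F} = \pm(d-1) \neq 0$ rules out a zero row, and restriction~(2) because a row of all ones would, by $d$-stochasticity of the columns (here $(d-1)$-stochasticity), again force the determinant to vanish, exactly as in the proof of Proposition~\ref{prop:DLP}(1).

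I expect the main obstacle to be the bookkeeping in the identification $S_{\tilde F} = (S_{F^{\vee}} \setminus x_n)^{\vee}$, i.e.\ checking that plucking the root $x_n$ from $S_F$ (which lowers the degree from $d$ to $d-1$) really is the composite ``dualize, delete the resulting leaf, dualize again'' --- one must be careful that the two dualizations happen in different ambient dimensions ($n$ versus $n-1$ variables) and that the degree drops by exactly one at the plucking step while staying constant at the leaf-deletion step. A clean way to handle this is to argue directly at the level of log matrices: write $A_F$ with its last column being $e_n$ (log vector of $f_n$, with a $0$ in the last row) and its last row being $(1,\dots,1,0)$; expand $\det A_F$ along the last row, and show the relevant cofactor is, up to sign, $\det A_{\tilde F}$ after subtracting the last row from every other row --- then invoke Proposition~\ref{prop:DPB} directly, bypassing the duality gymnastics entirely. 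Either route works; the matrix computation is probably the shortest and least error-prone.
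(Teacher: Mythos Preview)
Your approach is exactly the paper's: dualize to turn the root $x_n$ into a leaf of $S_{F^{\vee}}$, apply the Deleting Leaves Principle to $F^{\vee}$, and dualize back in $n-1$ variables using the identity $S_F/v=(S_{F^{\vee}}\setminus v)^{\vee}$. Note, however, that a few directions are reversed in your write-up --- in $F^{\vee}$ it is only the monomial dual to $f_n$ that contains $x_n$ (the other $n-1$ do not), so $S_{F^{\vee}}\setminus x_n$ has $n-1$ edges rather than one, and the deleted set $G=(F^{\vee})'$ has degree $n-d$, not $d$ --- but these are bookkeeping slips that do not affect the logic, and your matrix-expansion alternative is also sound.
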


\begin{proof}
 Let $S_F$ the clutter associated to $F$. Since $S_F/v=(S_F^{\vee}\setminus v)^{\vee}=(S_{F^\vee}\setminus v)^{\vee}$ is the clutter associated to $\tilde{F}$, 
 hence $(\tilde{F})^{\vee}=(F^{\vee})'$. By Duality Principle, Proposition \ref{prop:duality}, $F$ is a Cremona set if and only if $F^{\vee}$ is a Cremona set.
 Since $F^{\vee}$ has a leaf, by DLP, Proposition \ref{prop:DLP}, $F^{\vee}$   is a Cremona set if and only if $(F^{\vee})'$  is a Cremona set. 
 Since $(F^{\vee})'=(\tilde{F})^{\vee}$, the result follows by Duality Principle.
 \end{proof}

The Corollary  \ref{cor:PRP} is a particular case of the proposition 1.6 of \cite{rasi} about the generalized Jonquières transformations. 

To classify the Cremona sets of degree two, we use the following Lemma. A proof can be found in \cite[Lemma 4.1]{SV2}.

\begin{lema}\cite{SV2} \label{lema:cohesive}
Let $F = \{f_1, ... , f_n\} \subset \K[\underline{x}] = \K[x_1,\ldots,x_n]$ be forms of fixed degree $d \geq 2$.
Suppose one has a partition $\underline{x} = \underline{y} \cup \underline{z}$ of the variables such that $F = G \cup H$, where the forms in
the set $G$ (respectively, $H$) involve only the $\underline{y}$-variables (respectively, $\underline{z}$-variables). If neither $G$ nor
$H$ is empty then $F$ is not a Cremona set.
\end{lema}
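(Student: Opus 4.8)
The plan is to use the Determinantal Principle of Birationality (Proposition \ref{prop:DPB}): it suffices to show that the log matrix $A_F$ has $\det A_F = 0$, since $0 \neq \pm d$. First I would set up the block structure. Write $\underline{y} = \{y_1,\ldots,y_p\}$ and $\underline{z} = \{z_1,\ldots,z_q\}$, so that $p + q = n$, and suppose $G = \{g_1,\ldots,g_r\}$ involves only $\underline{y}$-variables while $H = \{h_1,\ldots,h_s\}$ involves only $\underline{z}$-variables, with $r + s = n$ and both $r, s \geq 1$. Ordering the rows of the log matrix so that the $\underline{y}$-rows come first and the $\underline{z}$-rows come last, and ordering the columns so that the forms in $G$ come first, the log matrix acquires the block form
\[
A_F = \begin{pmatrix} B & 0 \\ 0 & C \end{pmatrix},
\]
where $B$ is $p \times r$ (the exponents of the $\underline{y}$-variables in the forms of $G$) and $C$ is $q \times s$ (the exponents of the $\underline{z}$-variables in the forms of $H$); the two off-diagonal blocks vanish precisely because no $g_i$ involves a $\underline{z}$-variable and no $h_j$ involves a $\underline{y}$-variable.

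The key step is then a dimension count. Since each column of $A_F$ sums to $d$, each of the $r$ columns of $B$ sums to $d > 0$, so no column of $B$ is zero; in particular $r \geq 1$ forces $B$ to be nonempty, and similarly for $C$. Now I distinguish cases according to whether the block $A_F$ is even square in the naive sense. If $p \neq r$ then $B$ is not square; more to the point, consider the situation $p < r$: then $B$ has more columns than rows, so its $r$ columns are linearly dependent, and this dependence extends (by padding with zeros in the $\underline{z}$-rows) to a linear dependence among the first $r$ columns of $A_F$, giving $\det A_F = 0$. The symmetric argument with $C$ handles $q < s$. The remaining possibility is $p \geq r$ and $q \geq s$; combined with $p + q = n = r + s$ this forces $p = r$ and $q = s$, so both blocks are square. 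In that square case I argue that $\det B = 0$: the columns of $B$ all sum to $d$, so subtracting the first row from the vector of column sums, equivalently replacing one row by $d \cdot(1,\ldots,1)$ minus the sum of the others shows that the all-ones row vector is such that $(1,\ldots,1)B = (d,\ldots,d)$; hence the rows of $B$ together with suitable scaling produce a relation, and more cleanly: the $q$ rows coming from $C$ in $A_F$, together with the fact that summing all $p$ of the $B$-rows gives the constant vector $d$ in the first $r$ columns and $0$ elsewhere while summing all $q$ of the $C$-rows gives $0$ in the first $r$ columns and $d$ elsewhere, exhibits the all-ones vector of length $n$ as a left-kernel-adjacent relation; the precise statement is that $(\,\underbrace{1,\ldots,1}_{p}, \underbrace{-1,\ldots,-1}_{q}\,) A_F = (\,\underbrace{d,\ldots,d}_{r},\underbrace{-d,\ldots,-d}_{s}\,)$, which is nonzero, so this does not immediately give singularity — instead I fall back on the cleaner observation that $\det A_F = \pm \det B \cdot \det C$ and apply the standard fact that a $d$-stochastic square matrix of size $\geq 1$ whose ambient Cremona set fails the canonical restrictions (or simply: whose proper sub-block forces the determinant relation to fail) cannot have $\det = \pm d$ simultaneously with $\det C = \pm d$, because $|\det A_F| = |\det B|\,|\det C|$ and the factorization is incompatible with $|\det A_F| = d$ unless one factor is $d$ and the other is $1$, which a direct check on $d$-stochastic matrices rules out when both have size $\geq 1$ and $d \geq 2$.

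The main obstacle is the square sub-case: showing that the product $|\det B|\cdot|\det C| = d$ is impossible when both $B$ and $C$ are $d$-stochastic square matrices of size at least one. I expect to resolve it by noting that a $1\times 1$ $d$-stochastic matrix is exactly $(d)$ with determinant $d$, while for size $k \geq 1$ a $d$-stochastic matrix has $|\det| $ divisible by $d$ (subtract the sum of all other rows from any one row, as in the proof of DLP, to extract a factor of $d$ after suitable column operations), so $|\det B| = d\,m$ and $|\det C| = d\,m'$ for integers $m, m' \geq 0$; then $|\det A_F| = d^2 m m' \neq d$ for any choice, since $d \geq 2$. Thus $\det A_F \neq \pm d$, and by Proposition \ref{prop:DPB}, $F$ is not a Cremona set. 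This completes the argument in every case.
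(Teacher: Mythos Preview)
The paper does not supply its own proof of this lemma; it simply cites \cite[Lemma~4.1]{SV2}. Your argument via the Determinantal Principle of Birationality is correct and is presumably close in spirit to the original, since DPB is the workhorse of that reference as well.

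The exposition wanders, however. The aborted left-eigenvector computation in the square case (which you yourself note ``does not immediately give singularity'') should simply be cut. The clean line is exactly what you reach at the end: after reordering rows and columns, $A_F$ is block-diagonal with blocks $B$ of size $p\times r$ and $C$ of size $q\times s$, where $p+q=r+s=n$ and $r,s\geq 1$. If $p\neq r$, one of the two blocks has more columns than rows, so the columns of $A_F$ are linearly dependent and $\det A_F=0$. If $p=r$ and $q=s$, then $\det A_F=\det B\cdot\det C$; since every column of $B$ (respectively $C$) sums to $d$, adding all rows to the first row produces the row $(d,\ldots,d)$, whence $d\mid\det B$ and $d\mid\det C$, and therefore $d^2\mid\det A_F$. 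As $d\geq 2$, in every case $|\det A_F|$ is either $0$ or at least $d^2>d$, so it is never $d$, and Proposition~\ref{prop:DPB} finishes.

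One small correction: the divisibility $d\mid\det B$ is not what the proof of Proposition~\ref{prop:DLP} does --- that argument uses Laplace expansion along a row with a single nonzero entry, and separately a proportional-rows trick to force a zero determinant when the canonical restrictions fail. The right justification for your claim is the one-line row operation just described; drop the reference to DLP there.
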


\begin{defin}
 A set $F$ of monomials satisfying the canonical restrictions is said to be cohesive if the forms can not be disconnected as in the hypothesis of Lemma \ref{lema:cohesive}. 
\end{defin}

The next result is elementary, but it concentrates all the essential information about the leafless case.

\begin{lema}\label{lema:cycle}
 Let $C_n=(V,E)$ be an $n$-cycle with $n\geq 3$, $V=\{1,\ldots,n\}$ and\\ $E=\{\{1,2\},\{2,3\},\ldots,\{n-1,n\},\{n,1\}\}$. Let $M_n := M_{n \times n}$
 be the incidence matrix of $C_n$. Then $\det M_n = 1-(-1)^n$. 
\end{lema}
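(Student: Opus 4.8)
The plan is to compute $\det M_n$ directly by cofactor expansion, exploiting the almost-tridiagonal structure of the incidence matrix of the $n$-cycle. Order the vertices and edges as in the statement, so that edge $e_i = \{i, i+1\}$ for $1 \le i \le n-1$ and $e_n = \{n,1\}$. Then the incidence matrix $M_n = (m_{ij})$, with $m_{ij} = 1$ if vertex $i$ lies on edge $e_j$, has $1$'s on the main diagonal, $1$'s on the subdiagonal $m_{i+1,i}$, a single $1$ in the top-right corner $m_{1,n}$, a single $1$ in position $m_{n,n}$, and $0$'s elsewhere. (The exact placement of the two ``corner'' entries depends on the indexing convention, but in every case one gets a lower-triangular band plus one stray entry in the last column.)

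First I would expand the determinant along the last column, which has exactly two nonzero entries. The minor attached to the diagonal entry $m_{n,n}=1$ is the determinant of an $(n-1)\times(n-1)$ lower-triangular matrix with $1$'s on the diagonal, hence equals $1$. The minor attached to the corner entry $m_{1,n}=1$ is, up to the sign $(-1)^{1+n}$, the determinant of the matrix obtained by deleting row $1$ and column $n$; that matrix is again triangular (now with the subdiagonal $1$'s becoming the diagonal), so its determinant is $\pm 1$ as well, and a quick check of the sign of the permutation contributing the unique nonzero term shows the total contribution is $(-1)^{n+1}\cdot(\pm 1) = -(-1)^n$. Assembling the two terms gives $\det M_n = 1 + (-(-1)^n) = 1 - (-1)^n$, as claimed.

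Alternatively, and perhaps more cleanly, I would argue by a short recursion: expanding along the first row (which meets only edges $e_1$ and $e_n$) expresses $\det M_n$ in terms of two triangular minors, or one may observe that $M_n$ is exactly the log matrix of the square-free quadratic monomial set $F = \{x_1x_2, x_2x_3, \ldots, x_{n-1}x_n, x_nx_1\}$ and that the result is then a special case well suited to the Determinantal Principle of Birationality (Proposition \ref{prop:DPB}): the cycle gives a Cremona set precisely when $n$ is odd, which is consistent with $1 - (-1)^n \in \{0,2\}$.

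The only subtlety — and thus the main thing to be careful about — is bookkeeping of signs in the cofactor expansion: the two corner entries sit in positions whose cofactor signs are $(-1)^{1+n}$ and $(-1)^{n+n}$, and one must track the sign of the permutation realizing the unique nonzero term in the off-diagonal minor. Everything else is a routine triangular-determinant computation, so I do not expect any real obstacle beyond getting the parity of these signs right, which is exactly what produces the factor $(-1)^n$ in the final answer.
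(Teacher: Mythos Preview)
Your approach is correct and essentially the same as the paper's. The paper expands $\det M_n$ by Laplace's rule along the first row (which has nonzero entries only in columns $1$ and $n$), obtaining two triangular $(n-1)\times(n-1)$ minors with determinant $1$ each and combining them with the cofactor signs to get $1+(-1)^{n-1}=1-(-1)^n$; your primary suggestion of expanding along the last column is the same computation up to transposition, and you even mention the first-row expansion as your alternative.
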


\begin{proof} Computing the determinant by Laplace's rule on the first row:\\
$$
 \begin{vmatrix}
  1&0&\ldots&0&1\\
  1&1&\ldots&0&0\\
  0&1&\ldots&0&0\\
  0&0&\ldots&0&0\\
  \vdots & \vdots &\ldots&\vdots &\vdots \\
  0&0&\ldots&1&0\\
  0&0&\ldots&1&1\\
 \end{vmatrix}_{n}
  = 
\begin{vmatrix}  
  1&0&\ldots&0&0\\
  1&1&\ldots&0&0\\
  0&1&\ldots&0&0\\
   \vdots & \vdots &\ldots&\vdots &\vdots \\
  0&0&\ldots&1&0\\
  0&0&\ldots&1&1\\
 \end{vmatrix}_{n-1}
  + 
 (-1)^{n-1}\begin{vmatrix}
  1&1&0&\ldots&0\\
  0&1&1&\ldots&0\\
  0&0&1&\ldots&0\\
  \vdots & \vdots & \vdots &\ldots&\vdots \\
  0&0&0&\ldots&1\\
  0&0&0&\ldots&1\\
 \end{vmatrix}_{n-1}
 $$

 Since both determinants on the right are triangular, the result follows.
 \end{proof}

 We now are in a position to give a direct and purely combinatoric proof of the structure result of monomial Cremona sets in degree two, see \cite[Prop. 5.1]{SV2}. 
 
\begin{thm}\label{prop:degreetwo} Let $F \subset \K[x_1,\ldots,x_n]$ be a cohesive set of monomials of degree two 
satisfying the canonical restrictions. Let $A_F$ denote the log matrix and $G_F$ the graph. The following conditions are equivalent:
\begin{enumerate}
  \item $\det A_F \neq 0$; 
 \item $F$ is a Cremona set;
 \item Either \begin{enumerate}
                \item[(i)] $G_F$ has no loops and a unique cycle of odd degree; 
                \item[(ii)] $G_F$ is a tree with exactly one loop.
              \end{enumerate}

\end{enumerate}
\end{thm}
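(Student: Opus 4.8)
The plan is to compute $\det A_F$ completely from the combinatorics of $G_F$ and then invoke the Determinantal Principle of Birationality, Proposition~\ref{prop:DPB}. The implication $(2)\Rightarrow(1)$ needs nothing: a Cremona set satisfies $\det A_F=\pm d=\pm 2\neq 0$. For the rest, I would first record that, $F$ being cohesive, the graph $G_F$ is connected (a disconnection would split $F$ as in Lemma~\ref{lema:cohesive}); since $G_F$ has $n$ vertices and $n$ edges, its cyclomatic number is $1$, so it has exactly one cycle, and since the monomials of $F$ are pairwise distinct (no multiple edges) that cycle is either a single loop or a cycle $C_k$ with $k\geq 3$. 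Hence $G_F$ is obtained from a tree on all $n$ vertices by adding one more edge: if that edge is a loop then $G_F$ is a tree with exactly one loop, so alternative (3)(ii) holds; otherwise $G_F$ is loopless with a unique cycle $C_k$, $k\geq 3$, whose length may be even or odd. So condition (3) is equivalent to ``$G_F$ has a loop, or $k$ is odd,'' and it suffices to show that $\det A_F=\pm 2$ in exactly those cases and $\det A_F=0$ when $k$ is even.

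\emph{The loop case.} After relabelling we may assume $f_n=x_n^2$; then $f_1,\dots,f_{n-1}$ are square-free and form the edge set of the tree $T:=G_F\setminus\{\text{loop}\}$ on $x_1,\dots,x_n$. The last column of $A_F$ has all entries $0$ except a $2$ in the last row, so Laplace expansion along it gives $\det A_F=2\det A'$, where $A'$ is the $(n-1)\times(n-1)$ matrix obtained from the $0/1$ incidence matrix of $T$ by deleting the row of $x_n$. I would prove $\det A'=\pm 1$ by induction on the number of vertices: a tree with at least two vertices has at least two leaves, hence a leaf $\ell\neq x_n$; the row of $\ell$ in $A'$ has a single nonzero entry, a $1$ in the column of its unique incident edge, and expanding along that row reduces $A'$ to the analogous matrix for $T\setminus\ell$. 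Therefore $\det A_F=\pm 2\neq 0$, and by Proposition~\ref{prop:DPB} the set $F$ is a Cremona set; so (1), (2) and (3) all hold in this case.

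\emph{The loopless case.} Here $F$ is square-free and $G_F$ is connected and unicyclic with unique cycle $C_k$, $k\geq 3$. If $G_F$ is not already a cycle it has a leaf $v$, which I would remove using the Deleting Leaves Principle, Proposition~\ref{prop:DLP}. The point to verify is that one always lands in alternative (2) of that proposition, i.e.\ that the reduced set $F'$ still satisfies the canonical restrictions: a violation would mean some variable $x_j$ divides every monomial of $F'$, hence lies on all $n-1$ remaining edges, which in a simple loopless graph forces those edges to be exactly the star centred at $x_j$ --- but then $v$, being joined to $x_j$, would have degree $\geq 2$ in $G_F$, contradicting that it is a leaf; the other canonical restriction is disposed of the same way, using connectedness. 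Consequently $F'$ is again a loopless cohesive set satisfying the canonical restrictions, $G_{F'}$ is connected with the same cycle $C_k$, and $\det A_F=\pm\det A_{F'}$. Iterating, we strip $G_F$ down to $C_k$, so by Lemma~\ref{lema:cycle} we get $\det A_F=\pm\bigl(1-(-1)^k\bigr)$; this is $\pm 2$ when $k$ is odd and $0$ when $k$ is even. Proposition~\ref{prop:DPB} then gives: for $k$ odd, (1), (2) and (3)(i) all hold; for $k$ even, all three fail. Putting the two cases together yields $(1)\Leftrightarrow(2)\Leftrightarrow(3)$.

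The one step I expect to be delicate is this last verification --- that the canonical restrictions (equivalently, cohesiveness) are preserved under leaf-deletion in the loopless case --- because Proposition~\ref{prop:DLP} on its own permits a degenerate branch in which the reduced determinant is forced to vanish; excluding that branch is exactly where the structural facts ``$G_F$ connected, loopless, cyclomatic number $1$'' get used. Everything else is Laplace expansion together with Lemma~\ref{lema:cycle} and the Determinantal Principle of Birationality.
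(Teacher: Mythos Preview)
Your argument is correct and follows essentially the same route as the paper: both recognize that cohesiveness forces $G_F$ to be connected with cyclomatic number one, peel off leaves via Proposition~\ref{prop:DLP} until only the unique cycle (or loop) remains, and then invoke Lemma~\ref{lema:cycle} together with the DPB. The only noticeable difference is cosmetic: the paper strips leaves uniformly and lands on either a cycle or a single loop $\{x^2\}$, whereas you split off the loop case at the outset and handle it by a direct Laplace expansion plus the standard tree--incidence--matrix induction; your explicit verification that the canonical restrictions survive each leaf deletion is a point the paper passes over silently.
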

 
\begin{proof}

Since $F$ is a cohesive set, the associated graph is connected. Therefore, by DLP, Proposition \ref{prop:DLP}, we can delete all the leaves of $G_F$ to construct another cohesive set $F'$ on 
$m\leq n$ variables whose graph is connected and leafless. 
The incidence Equation \ref{eq:incidence} applied to $F'$ give us $a'_1=\ldots=a'_m=2$. Hence $G_{F'}$ is a disjoint union of cycles and loops. 
Since $G_{F'}$ is connected, there are only two possibilities. Either
\begin{enumerate}
 \item[(i)] $G_{F'}$ is a single loop; $F'=\{x^2\}$ and $\det A_F=2$;
 \item[(ii)] or $G_{F'}$ is a cycle. By Lemma \ref{lema:cycle}, odd cycles has determinant $2$ and even cycles have determinant $0$. 
\end{enumerate}
The result follows by attaching the petals on $F'$ to reach $F$. 
\end{proof}

\subsection{Counting Lemmas}

Our main goal is to count the equivalence classes of monomial Cremona transformations under the natural action of $S_n$. To do this, we study the action of the group 
$S_n$ on the set of all square free Cremona sets of monomials of fixed degree.\\

\begin{defin}
Let $G$ be a group and $X$ a non empty set. We denote by
$$\begin{array}{cccc}
\ast: & G \times X & \rightarrow & X \\
 & (g,x) & \mapsto & g*x \end{array}$$
an action of $G$ on $X$. The action induces a natural equivalence relation among the elements of $X$; $x\equiv y$ if and only if, $x=g\ast y$ for some $g \in G$. We denote the orbit of a element $x \in X$ by
$$\mathcal{O}_x=\{x' \in X\mid x'=g \ast x \mbox{ for some}\  g \in G\}.$$
The set of the orbits is the quotient $X/G$. The stabilizer of $x$ is $G_x=\{g \in G \mid g*x=x\}<G$. If $G$ acts on finite set $X$, then this action induces an action on $X^k$ the set of $k$-subsets of $X$.\\
 
\end{defin}

\begin{defin}\label{def_action}
Let $\mathcal{M}_{n,d}$ be the set of square free monomials in $\K[x_1, \ldots, x_n]$ and let $\mathcal{M}^k_{n,d}$ be the set of $k$-subsets of $\mathcal{M}_{n,d}$. There is a natural action of $S_n$ on both the sets $\mathcal{M}_{n,d}$ and $\mathcal{M}^k_{n,d}$.

$$\begin{array}{cccc}
\ast: & S_n \times \mathcal{M}_{n,d}  & \rightarrow & \mathcal{M}_{n,d} \\ &(\sigma,m)=(\sigma, x_1^{a_1}\ldots x_n^{a_n}) & \mapsto & \sigma \ast m = x_{\sigma(1)}^{a_1}\ldots x_{\sigma(n)}^{a_n}
\end{array}$$
$$\begin{array}{cccc}
 \ast: & S_n \times \mathcal{M}^k_{n,d}  & \rightarrow & \mathcal{M}^k_{n,d}\\
& (\sigma,\{m_1, \ldots, m_k\}) & \mapsto & \{\sigma \ast m_1, \ldots, \sigma \ast m_k\}
\end{array}.$$

Let us consider the subset $\mathcal{C}_{n,d} \subset \mathcal{M}^n_{n,d}$ of Cremona sets representing square free monomial Cremona maps. 
To count equivalence classes of square free  monomial Cremona maps under the action of $S_n$ is to determine the orbits of $\mathcal{C}_{n,d}/S_n \subset \mathcal{M}^n_{n,d}/ S_n$.
 
\end{defin}

The next result allows to construct $\mathcal{M}^k_{n,d}/S_n$ iteratively. Notice that $S_n$ acts transitively on $M^1_{n,d}$.

\begin{lema}\label{processo}
 If $\mathcal{M}^i_{n,d}/S_n=\{\mathcal{O}_{F_1}, \ldots, \mathcal{O}_{F_r}\}$ then $$\mathcal{M}^{i+1}_{n,d}/S_n=\{\mathcal{O}_{\{F_j,\beta*f\}}; \, F_j \mbox{ is a representative of an orbit on}\ \mathcal{M}^i_{n,d}/S_n,$$ $$\mbox{for some} \, f \in \mathcal{M}_{n,d} \setminus F_j\ \mbox{ and }\ \, \forall \beta \in S_n \,  \}$$
 \end{lema}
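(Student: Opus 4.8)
The plan is to show the two inclusions between the set described on the right-hand side and $\mathcal{M}^{i+1}_{n,d}/S_n$. Write $\mathcal{R}$ for the collection of orbits $\mathcal{O}_{\{F_j,\,\beta * f\}}$ appearing on the right, where $F_j$ ranges over a fixed set of orbit representatives of $\mathcal{M}^i_{n,d}/S_n$, $f$ ranges over $\mathcal{M}_{n,d}\setminus F_j$, and $\beta$ ranges over $S_n$; here the hypothesis is that each $\{F_j,\beta*f\}$ is genuinely an $(i+1)$-subset, i.e. $\beta*f\notin F_j$. The inclusion $\mathcal{R}\subseteq \mathcal{M}^{i+1}_{n,d}/S_n$ is immediate, since each $\{F_j,\beta*f\}$ is by construction an $(i+1)$-subset of $\mathcal{M}_{n,d}$, hence its orbit lies in $\mathcal{M}^{i+1}_{n,d}/S_n$.

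For the reverse inclusion, the key step is: given any $(i+1)$-subset $G=\{m_1,\ldots,m_{i+1}\}\subseteq\mathcal{M}_{n,d}$, exhibit an element of $\mathcal{R}$ equal to $\mathcal{O}_G$. First I would single out one monomial, say $m_{i+1}$, and set $G_0=\{m_1,\ldots,m_i\}$, an $i$-subset. By the inductive description of $\mathcal{M}^i_{n,d}/S_n$, there is an index $j$ and a permutation $\sigma\in S_n$ with $\sigma * G_0 = F_j$. Apply $\sigma$ to all of $G$: then $\sigma * G = F_j \cup \{\sigma * m_{i+1}\}$, and since the $S_n$-action on $\mathcal{M}^{i+1}_{n,d}$ sends subsets to subsets of the same cardinality, $\sigma * m_{i+1}\notin F_j$. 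Now pick any $f\in\mathcal{M}_{n,d}\setminus F_j$ and any $\beta\in S_n$ with $\beta * f = \sigma * m_{i+1}$ — for instance $f=\sigma*m_{i+1}$ and $\beta=\mathrm{id}$, which is a legitimate choice since $f\notin F_j$. Then $\{F_j,\beta*f\}=\sigma*G$, so $\mathcal{O}_{\{F_j,\beta*f\}}=\mathcal{O}_{\sigma*G}=\mathcal{O}_G$, and this orbit lies in $\mathcal{R}$.

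Combining the two inclusions gives $\mathcal{M}^{i+1}_{n,d}/S_n=\mathcal{R}$, which is exactly the claimed formula. The only subtle point worth spelling out is that the $S_n$-action on $\mathcal{M}^{i+1}_{n,d}$ is well defined — i.e. that $\sigma$ carries a set of $i+1$ distinct monomials to a set of $i+1$ distinct monomials — but this is immediate because $\sigma$ acts bijectively on $\mathcal{M}_{n,d}$, so $\sigma * m_{i+1}$ cannot coincide with any $\sigma * m_k$ for $k\le i$. No part of the argument is genuinely hard; the main thing to be careful about is keeping straight the roles of the three quantifiers (the representative $F_j$, the added monomial $f$, and the permutation $\beta$), since in the forward direction one only needs a single well-chosen triple rather than all of them. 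The redundancy built into ranging over all $\beta\in S_n$ is harmless for correctness and is what makes the statement convenient to use algorithmically.
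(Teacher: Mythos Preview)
Your proof is correct and follows essentially the same double-inclusion approach as the paper: the easy inclusion is immediate, and for the reverse inclusion both you and the paper remove one monomial from an arbitrary $(i+1)$-set, move the remaining $i$-set onto a chosen representative $F_j$ by some permutation, and then identify the extra monomial as $\beta*f$. The only cosmetic difference is that the paper invokes transitivity of $S_n$ on $\mathcal{M}_{n,d}$ to produce $\beta$ for an arbitrary $f$, whereas you simply take $f=\sigma*m_{i+1}$ and $\beta=\mathrm{id}$; both choices suffice.
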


\begin{proof} Define $A=\{\mathcal{O}_{\{F_j,\beta*f\}}; \, F_j \mbox{ is a representative of some orbit in} \mathcal{M}^i_{n,d}/S_n,$ $\mbox{for some } \, f \in \mathcal{M}_{n,d} \setminus F_j \mbox{ and } \, \forall \beta \in S_n\}$. 
 Since  $A \subset \mathcal{M}^{i+1}_{n,d}/S_n$ by construction we will show that $\mathcal{M}^{i+1}_{n,d}/S_n \subset  A$.

In fact, consider $\mathcal{O}_H \in \mathcal{M}^{i+1}_{n,d}/S_n$, 
say that $H=\{g_1, \ldots, g_i, g_{i+1}\}$. Since $\overline{H}=H \setminus \{g_{i+1}\}$ has $i$ elements 
there is a $j$ such that $\overline{H} \in \mathcal{O}_{F_j}$, that is, there is $\gamma \in S_n$ such that 
$\gamma*F_j=\overline{H}$.

Since $S_n$ acts transitively in $\mathcal{M}_{n,d}$, for $f \in \mathcal{M}_{n,d} \setminus F_j$, 
there is $\beta \in S_n$ such that $\beta*f=\gamma^{-1}*g_{i+1}$.

Therefore $\gamma*\{F_j, \beta*f\}=H$, that is, $\mathcal{O}_H=\mathcal{O}_{\{F_j, \beta*f\}} \in A$.
\end{proof}

This process determines $\mathcal{M}^i_{n,d}/S_n$ inductively, but each orbit can appear many times. We now 
answer partially the natural question of whether $\mathcal{O}_{\{F_j, \beta_1*f\}}=\mathcal{O}_{\{F_k, \beta_2*g\}}$, 
with $F_j, F_k$ representatives of distinct orbits in $\mathcal{M}^i_{n,d}/S_n$.

\begin{lema}\label{isomorfas}
Let $F \in \mathcal{M}^i_{n,d}$ and $f \in \mathcal{M}_{n,d}$. If $\gamma \in G_F$ and $\beta^{-1} \gamma \delta \in G_f$ then 
$\mathcal{O}_{\{F, \delta*f\}}=\mathcal{O}_{\{F, \beta*f\}}$.
In particular, if $\gamma \in G_F$ then $\mathcal{O}_{\{F, \gamma*f\}}=\mathcal{O}_{\{F, f\}}$.
\end{lema}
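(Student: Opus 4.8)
The plan is to exhibit one explicit element of $S_n$ carrying $\{F,\delta*f\}$ to $\{F,\beta*f\}$; the only natural candidate is $\gamma$ itself. First I would record the two facts that make this work. Since $\gamma\in G_F$ we have $\gamma*F=F$. Since $\beta^{-1}\gamma\delta\in G_f$ we have $(\beta^{-1}\gamma\delta)*f=f$, and because $\ast$ is a left action (Definition \ref{def_action}) this rearranges to $(\gamma\delta)*f=\beta*f$. Then the computation is immediate:
$$\gamma*\{F,\delta*f\}=\{\gamma*F,\ \gamma*(\delta*f)\}=\{\gamma*F,\ (\gamma\delta)*f\}=\{F,\ \beta*f\},$$
where the middle equality uses associativity of the action and the outer ones use the two recorded facts. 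Hence $\{F,\delta*f\}$ and $\{F,\beta*f\}$ lie in the same $S_n$-orbit, i.e. $\mathcal{O}_{\{F,\delta*f\}}=\mathcal{O}_{\{F,\beta*f\}}$.

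For the final ``in particular'' assertion I would deduce it from the part just proved. Since $G_F$ is a subgroup of $S_n$, $\gamma\in G_F$ forces $\gamma^{-1}\in G_F$; applying the first part with $\gamma$ replaced by $\gamma^{-1}$, with $\delta=\gamma$ and $\beta=\operatorname{id}$, the required membership becomes $\beta^{-1}\gamma^{-1}\delta=\gamma^{-1}\gamma=\operatorname{id}\in G_f$, which holds automatically, and we obtain $\mathcal{O}_{\{F,\gamma*f\}}=\mathcal{O}_{\{F,f\}}$. (Equivalently, one checks directly that $\gamma^{-1}*\{F,\gamma*f\}=\{\gamma^{-1}*F,\ f\}=\{F,f\}$.)

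There is essentially no obstacle here: the entire content is bookkeeping for a left group action. The one point deserving a word of care is the left-action convention, so that $\gamma*(\delta*f)=(\gamma\delta)*f$ and so that membership of $\beta^{-1}\gamma\delta$ in the stabilizer $G_f$ really does translate into the displayed equality $(\gamma\delta)*f=\beta*f$ of monomials; once the conventions from Definition \ref{def_action} are fixed, the argument above is complete.
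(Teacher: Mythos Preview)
Your proof is correct and follows exactly the approach indicated in the paper, which simply instructs the reader to verify that $\gamma*\{F,\delta*f\}=\{F,\beta*f\}$; you have carried out that verification in full detail, including the left-action bookkeeping and the specialization for the ``in particular'' clause.
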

\begin{proof} Verify that $\gamma*\{F,\delta*f\}=\{F, \beta*f\}$.
\end{proof}

By using that every permutation is a bijection, we can prove the two following Lemmas. 

\begin{lema}\label{dual}
If $F,H \in \mathcal{M}^i_{n,d}$ and $\mathcal{O}_{F}=\mathcal{O}_{H}$ then 
$\mathcal{O}_{F^{\vee}}=\mathcal{O}_{H^{\vee}}$.
\end{lema}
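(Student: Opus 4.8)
The statement to prove is Lemma \ref{dual}: if $F,H \in \mathcal{M}^i_{n,d}$ lie in the same $S_n$-orbit, then their dual complements $F^\vee, H^\vee$ also lie in the same orbit. The plan is to unwind the definitions and exhibit the required permutation explicitly. By hypothesis there is $\sigma \in S_n$ with $H = \sigma \ast F$, that is, $H = \{\sigma \ast f \mid f \in F\}$. The goal is to show $H^\vee = \sigma \ast F^\vee$, which by definition of the orbit gives $\mathcal{O}_{F^\vee} = \mathcal{O}_{H^\vee}$.

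First I would recall, from the definition of the dual complement clutter, that passing to $F^\vee$ amounts, at the level of the clutter $S_F = (V,E)$, to replacing each edge $e \in E$ by its complement $V \setminus e$. The key observation is that this complementation commutes with the induced $S_n$-action on subsets of $V$: for any $\sigma \in S_n$ and any subset $e \subseteq V = \{x_1,\ldots,x_n\}$ we have $\sigma(V \setminus e) = V \setminus \sigma(e)$, simply because $\sigma$ is a bijection of $V$ onto itself (this is exactly the phrase ``by using that every permutation is a bijection'' in the text). Translating back to monomials: if $f = \underline{x}^v$ has log vector $v \in \{0,1\}^n$, then $\sigma \ast f$ has log vector $\sigma(v)$, and the dual complement of $\sigma \ast f$ has log vector $\mathbf{1} - \sigma(v) = \sigma(\mathbf{1} - v)$ (again because permuting coordinates commutes with subtracting from the all-ones vector), which is precisely the log vector of $\sigma \ast (f^\vee)$. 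Hence $(\sigma \ast f)^\vee = \sigma \ast (f^\vee)$ for each individual monomial $f$.

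Then I would assemble the set-level identity: applying the previous step elementwise over $f \in F$, and using that $\sigma \ast (-)$ and $(-)^\vee$ are both applied to sets by acting on each member,
$$H^\vee = (\sigma \ast F)^\vee = \{(\sigma \ast f)^\vee \mid f \in F\} = \{\sigma \ast (f^\vee) \mid f \in F\} = \sigma \ast (F^\vee).$$
Therefore $F^\vee$ and $H^\vee$ are in the same $S_n$-orbit, i.e. $\mathcal{O}_{F^\vee} = \mathcal{O}_{H^\vee}$, as claimed.

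This lemma is essentially a formal verification, so I do not expect a genuine obstacle; the only point requiring a moment's care is the compatibility $\sigma(\mathbf{1}-v) = \mathbf{1} - \sigma(v)$ (equivalently $\sigma(V\setminus e) = V \setminus \sigma(e)$), which must be stated cleanly since it is the entire content of the proof. One should also note implicitly that $\sigma \ast F$ still has $i$ distinct elements (the action is by bijections, so it preserves cardinality), so that $H^\vee$ is well defined as an element of $\mathcal{M}^i_{n,d}$; this is automatic but worth keeping in mind when the lemma is later applied to Cremona sets via the Duality Principle, Proposition \ref{prop:duality}.
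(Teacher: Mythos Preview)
Your argument is correct and matches the paper's approach exactly: the paper dispenses with Lemma \ref{dual} in a single line, observing that it follows ``by using that every permutation is a bijection,'' and your proof simply spells out that sentence via the identity $\sigma(\mathbf{1}-v)=\mathbf{1}-\sigma(v)$.
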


\begin{lema}\label{cone}
If $F,H \in \mathcal{M}^i_{n,d}$ and $\mathcal{O}_{F}=\mathcal{O}_{H}$, then for all maximal cone 
$\mathcal{C}$ of $F$ there is a just one maximal cone $\mathcal{C}'$ of $H$ such that 
$\mathcal{O}_{\mathcal{C}}=\mathcal{O}_{\mathcal{C}'}$ and {\it vice versa}.
\end{lema}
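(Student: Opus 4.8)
The plan is to exploit the fact that a permutation $\sigma\in S_n$ acts on square-free monomials by relabelling variables, hence carries clutters to clutters and, crucially, preserves all combinatorial features used to define a maximal cone. First I would fix $\sigma\in S_n$ with $\sigma\ast F = H$ (such $\sigma$ exists because $\mathcal{O}_F=\mathcal{O}_H$); I want to show that $\mathcal{C}\mapsto \sigma\ast\mathcal{C}$ is a bijection between the maximal cones of $F$ and those of $H$, and that $\mathcal{O}_{\mathcal{C}}=\mathcal{O}_{\sigma\ast\mathcal{C}}$ (the latter being immediate, since $\sigma$ itself realizes the equivalence). The statement is then symmetric in $F$ and $H$ via $\sigma^{-1}$, giving the ``vice versa''.

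The key steps, in order, are: (1) Observe that $\sigma$ induces an isomorphism of clutters $S_F\to S_H$ sending the vertex $x_i$ to $x_{\sigma(i)}$ and each edge $e_i$ of $S_F$ to an edge of $S_H$; in particular it sends subclutters to subclutters and preserves cardinalities of edges and of edge-sets. (2) Check that $\sigma$ sends a cone to a cone: if $C=(V',E')$ is a subclutter of $S_F$ with a vertex $v\in e$ for every $e\in E'$, then $\sigma(v)\in\sigma(e)$ for every edge of $\sigma\ast C$, so $\sigma\ast C$ is a cone of $S_H$ of the same cardinality $|E'|$. Conversely $\sigma^{-1}$ sends cones of $S_H$ to cones of $S_F$, so $\sigma$ restricts to a cardinality-preserving bijection between the sets of cones. (3) Since the maximal cones are exactly the cones of maximal cardinality, and $\sigma$ preserves cardinality, this bijection carries maximal cones of $F$ to maximal cones of $H$. (4) For a maximal cone $\mathcal{C}$ of $F$, set $\mathcal{C}'=\sigma\ast\mathcal{C}$; then $\mathcal{O}_{\mathcal{C}}=\mathcal{O}_{\mathcal{C}'}$ by definition of the $S_n$-action on $\mathcal{M}^k_{n,d}$, and uniqueness of $\mathcal{C}'$ with this property among maximal cones of $H$ will follow because two distinct maximal cones of $H$ lie in distinct orbits — here I would use that the $S_n$-orbit of a cone records its cardinality and its base up to equivalence, but more simply, if $\mathcal{C}'_1$ and $\mathcal{C}'_2$ were both $S_n$-equivalent to $\mathcal{C}$ then they would be $S_n$-equivalent to each other, and one checks this forces $\mathcal{C}'_1=\mathcal{C}'_2$ once we also track which vertex is the apex; alternatively one states uniqueness only up to the orbit, matching how Lemma \ref{cone} is used downstream.

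The main obstacle is step (4): the precise sense in which $\mathcal{C}'$ is ``the just one'' maximal cone with $\mathcal{O}_{\mathcal{C}}=\mathcal{O}_{\mathcal{C}'}$. If $H$ happens to have two genuinely different maximal cones that are abstractly $S_n$-equivalent (same cardinality, equivalent bases), then uniqueness as literally stated fails unless one reads ``just one'' as ``a canonically determined one, namely $\sigma\ast\mathcal{C}$ for the chosen $\sigma$.'' I would resolve this by phrasing the proof so that the asserted correspondence is the explicit map $\mathcal{C}\mapsto\sigma\ast\mathcal{C}$ induced by a fixed $\sigma$ with $\sigma\ast F=H$; this is a genuine bijection on the nose between the maximal-cone sets, it trivially satisfies $\mathcal{O}_{\mathcal{C}}=\mathcal{O}_{\sigma\ast\mathcal{C}}$, and it is its own inverse up to replacing $\sigma$ by $\sigma^{-1}$, which is exactly the ``vice versa.'' The remaining verification — that $\sigma\ast$ really does send the poset of subclutters of $S_F$ isomorphically onto that of $S_H$ and preserves the cone property and cardinalities — is the routine bookkeeping referred to in the sentence preceding the lemma (``By using that every permutation is a bijection''), and I would present it in one or two lines rather than belabor it.
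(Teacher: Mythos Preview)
Your approach is correct and matches the paper's: the authors give no proof beyond the sentence ``By using that every permutation is a bijection, we can prove the two following Lemmas,'' and your argument---fix $\sigma$ with $\sigma\ast F=H$ and transport cones along $\sigma$---is exactly the intended elaboration. Your caution about the literal uniqueness claim is well-founded: the paper later tabulates maximal cones \emph{with multiplicity} (e.g.\ ``$C_3\ (\times 2)$''), so the operative content is that $\sigma$ induces a bijection of maximal-cone \emph{multisets}, which is precisely the reading you adopt.
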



Let $F \subset \K[x_1, \ldots, x_n]$ be a set of square free monomials of degree $d$.
The sequence of incidence degrees of $F$ is the sequence of incidence degree of $x_1, \ldots, x_n$ 
in non-increasing order. If the incidence degree of $x_i$ in $F$ is $a$ and $\alpha \in S_n$ such that $\alpha(i)=j$ 
then the incidence degree of $x_j$ in $\alpha * F$ is $a$. We have proved the following. 

\begin{lema}\label{sequencia de grau}
If $\mathcal{O}_{F}=\mathcal{O}_{H}$ then $F$ and $H$ have the same sequence of incidence degrees.
\end{lema}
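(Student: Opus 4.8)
The plan is to show that the sequence of incidence degrees is an invariant of the $S_n$-orbit by tracking how a permutation $\alpha$ transports the incidence data. First I would recall the definition: for $F = \{f_1,\ldots,f_i\} \subset \mathcal{M}_{n,d}$, the incidence degree of $x_j$ in $F$ is the cardinality of the set $\{k : x_j \mid f_k\}$, equivalently the number of $1$'s in the $j$-th row of the log matrix, and the sequence of incidence degrees is the multiset of these numbers sorted in non-increasing order.

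Next I would carry out the key computation, which is essentially the observation already flagged in the paragraph preceding the statement. Suppose $\mathcal{O}_F = \mathcal{O}_H$, so there is $\alpha \in S_n$ with $H = \alpha * F$. The action on a single monomial sends $x_1^{a_1}\cdots x_n^{a_n}$ to $x_{\alpha(1)}^{a_1}\cdots x_{\alpha(n)}^{a_n}$, so $x_j \mid f$ if and only if $x_{\alpha(j)} \mid \alpha * f$. Therefore the map $f \mapsto \alpha * f$ is a bijection from $\{f \in F : x_j \mid f\}$ onto $\{g \in H : x_{\alpha(j)} \mid g\}$, which shows the incidence degree of $x_j$ in $F$ equals the incidence degree of $x_{\alpha(j)}$ in $H$. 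Since $\alpha$ is a permutation of $\{1,\ldots,n\}$, running over all $j$ we conclude that the multiset of incidence degrees of $H$ is exactly the multiset of incidence degrees of $F$; sorting both in non-increasing order yields the same sequence.

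Finally I would note that this is precisely the content asserted in the lemma, so no further argument is needed. There is no real obstacle here: the only thing to be careful about is the bookkeeping in the definition of the action (whether the permutation acts on indices or on variables) so that one correctly gets $x_j \mid f \iff x_{\alpha(j)} \mid \alpha*f$ rather than the inverse permutation; once this is pinned down the rest is immediate. In fact, since the text says ``We have proved the following'' just before the statement, the intended proof is exactly this short remark, and I would simply write it out in two or three sentences.
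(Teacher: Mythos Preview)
Your proposal is correct and matches the paper's approach exactly: the paper's entire proof is the one-sentence observation preceding the statement (if the incidence degree of $x_i$ in $F$ is $a$ and $\alpha(i)=j$, then the incidence degree of $x_j$ in $\alpha*F$ is $a$), which you have simply unpacked in more detail. Your remark about the bookkeeping of the action is the only subtlety, and you handle it correctly.
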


\begin{rmk}\rm 
The converse is not true as one can check with easy examples. Indeed, let $F=\{x_1x_2,x_2x_3,x_1x_3,x_1x_4,x_4x_5,x_5x_6\}$ and 
let $H=\{x_1x_2,x_2x_3,x_3x_4,x_4x_5,x_1x_5,x_1x_6\}$ be sets of square free monomials of degree $2$. They have the same sequence of incidence degree, but $\mathcal{O}_{F} \neq \mathcal{O}_{H}$. 
It is easy to see that the graphs $G_F$ and $G_H$ associated with $F$ and $H$ respectively are nonequivalent. Indeed,  $G_H$ has just one cycle, and it has three elements.  On the other side, $G_H$ has just one cycle. It has five elements, therefore $\mathcal{O}_{F} \neq \mathcal{O}_{H}$. 
\end{rmk}

\section{Counting square-free monomial Cremona maps}

\subsection{Counting Cremona sets in $\mathbb{P}^{3}$}
In this section we re-obtain some results of \cite{SV2} in \S 3.1 and \S 3.2. The \S 3.3 contains the new results that are Propositions $3.6$, $3.7$ and $3.8.$\\


%

\begin{prop}\label{cor:p3}

There are three nonisomorphic square-free monomial Cremona transformations in $\P^3$. 
\end{prop}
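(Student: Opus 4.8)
The plan is to enumerate square-free monomial Cremona sets $F\subset\K[x_1,x_2,x_3,x_4]$ of degree $d$ satisfying the canonical restrictions, up to the action of $S_4$, and to count the Cremona ones using the DPB (Proposition~\ref{prop:DPB}). First I would reduce the problem by degree. In $n=4$ variables a square-free monomial of degree $d$ has $d\le 4$; but if $d=4$ the only square-free monomial is $x_1x_2x_3x_4$, so $F$ cannot have four distinct elements, and if $d=1$ the canonical restrictions fail (each $x_i$ must divide some $f_k$ and not divide another, forcing repetitions). So only $d=2$ and $d=3$ occur, and by the Duality Principle (Proposition~\ref{prop:duality}) the $d=3$ case is dual-equivalent to the $d=2$ case: $F\mapsto F^{\vee}$ sends a degree-two clutter on $4$ vertices to a degree-two clutter on $4$ vertices (since $4-2=2$). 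Hence it suffices to classify cohesive degree-two Cremona sets in $4$ variables and then check that the duality does not collapse or create classes.

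Next I would apply Theorem~\ref{prop:degreetwo}. A cohesive degree-two Cremona set $F$ in $4$ variables corresponds to a connected graph $G_F$ on $4$ vertices with $4$ edges (counting a loop as an edge) that either (i) has no loop and a unique cycle, which must be odd, i.e.\ a triangle with one pendant edge, or (ii) is a tree with exactly one loop. Enumerating connected graphs on $4$ labelled vertices with $4$ edges up to $S_4$: in case (i) the triangle-plus-pendant-edge graph is unique up to isomorphism; in case (ii) a tree on $4$ vertices is either the path $P_4$ or the star $K_{1,3}$, and attaching a loop gives the possibilities ``loop at an interior/end vertex of $P_4$'' and ``loop at the center/a leaf of $K_{1,3}$'' — but one must discard those violating the canonical restrictions (every $x_i$ must divide some monomial: automatic since $G_F$ has no isolated vertex; and every $x_i$ must fail to divide some monomial: this rules out a vertex of incidence degree $4$, e.g.\ a loop at the center of $K_{1,3}$). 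I expect this bookkeeping to leave exactly the claimed number of classes once the Counting Lemmas~\ref{isomorfas}, \ref{dual}, \ref{sequencia de grau} are used to confirm non-equivalence (the incidence-degree sequence distinguishes most cases).

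The main obstacle is the interaction with duality: I must verify that passing to $F^{\vee}$ in degree $3$ does not produce Cremona sets outside the list obtained from degree $2$, and conversely that distinct degree-two classes can become dual-equivalent, so that the final count of \emph{three} classes in $\P^3$ correctly merges the degree-two list with its dual degree-three list rather than doubling it. Concretely I would compute $F^{\vee}$ for each surviving degree-two representative, identify which degree-two class its (degree-two) dual lands in via the incidence-degree sequence and Lemma~\ref{dual}, and then tabulate the orbits of the involution $F\mapsto F^{\vee}$ on the set of all degree-two \emph{and} degree-three classes; the three fixed/paired orbits give the three non-isomorphic Cremona transformations. A cross-check is available through the DLP/PRP (Proposition~\ref{prop:DLP}, Corollary~\ref{cor:PRP}): every leaf or root can be stripped to land in $\P^2$ or $\P^3$ with smaller data, and one verifies consistency with the known smaller cases.
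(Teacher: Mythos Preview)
Your proposal has several genuine errors that derail the argument.

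First, you wrongly exclude $d=1$. With $n=4$ and $d=1$ the set $F=\{x_1,x_2,x_3,x_4\}$ consists of four distinct square-free monomials; each $x_i$ divides $f_i$ and fails to divide $f_j$ for $j\neq i$, so the canonical restrictions hold. This is the identity map, and the paper counts it as one of the three classes.

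Second, your duality is miscomputed. The dual complement sends degree $d$ to degree $n-d$. With $n=4$ this pairs $d=1$ with $d=3$ and makes $d=2$ self-dual; it does \emph{not} pair $d=2$ with $d=3$. Consequently the whole ``merging under the involution $F\mapsto F^{\vee}$'' discussion in your last paragraph is based on a false premise: there is no merging, the three degrees simply contribute $1+1+1$.

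Third, you apply Theorem~\ref{prop:degreetwo} including case~(ii), trees with a loop. A loop encodes the monomial $x_i^2$, which is not square-free; since the statement concerns square-free Cremona sets, only case~(i) is relevant. In four variables case~(i) gives exactly one graph, the triangle with a pendant edge, and that is the end of the $d=2$ story.

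The paper's proof is immediate once these points are straight: for $d=1$ the identity, for $d=3$ the standard inversion (the dual of the identity), and for $d=2$ the single square-free graph from Theorem~\ref{prop:degreetwo}. No orbit-merging or case~(ii) bookkeeping is needed.
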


\begin{proof}
 We are in the case $n=4$. Let $d$ be the common degree. If $d=1$ we have only the identity. If $d=3$, the unique Cremona Transformation is the standard inversion.

  If $d=2$, by Theorem \ref{prop:degreetwo} there are only one possible Cremona set whose graph is:

  \hspace{2.0in}
  \begin{figure}[!htbp]

\begin{center}
\label{aaa}
\includegraphics[height=0.8in]{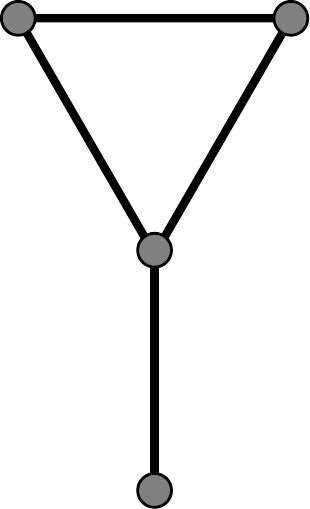}\\
\caption{The unique graph of a Cremona map in $\P^3$ with $d=2$}
\end{center}
\end{figure}

\end{proof}

\subsection{Counting Cremona sets in  $\mathbb{P}^{4}$}
\begin{thm}\label{cor:p4}
There are ten nonisomorphic square-free monomial Cremona transformations in $\P^4$. 
\end{thm}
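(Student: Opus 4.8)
The plan is to exhaust the cases on the common degree $d$ of the Cremona set $F \subset \K[x_1,\ldots,x_5]$, using the Duality Principle (Proposition \ref{prop:duality}) to cut the work in half, and then to reduce each remaining case to the leafless situation via the Deleting Leaves Principle (Proposition \ref{prop:DLP}). Since $n=5$, a set $F$ satisfying the canonical restrictions has $d \in \{1,2,3,4\}$: the degree-one case is the identity, and by duality the degree-four case ($d = n-1$) corresponds to the degree-one case applied to $F^\vee$, contributing one map (the standard Cremona inversion $\underline{x} \mapsto (x_2x_3x_4x_5 : x_1x_3x_4x_5 : \cdots)$). Likewise the degree-$3$ lists and degree-$2$ lists need not both be computed from scratch: $d=2$ and $d=3$ are swapped by duality only when the number of variables makes $d$ and $n-d$ coincide, which does not happen here, so I will treat $d=2$ and $d=3$ and then check that duality pairs each $d=2$ class with a distinct $d=3$ class, or count them independently.

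For $d=2$ I would invoke Theorem \ref{prop:degreetwo} directly. A cohesive Cremona set of degree two on $5$ variables corresponds to a connected graph $G_F$ on $5$ vertices with $5$ edges that is either a tree with one loop or a loopless graph with a unique odd cycle. Deleting leaves reduces $G_F$ to a single loop or to an odd cycle ($C_3$ or $C_5$, since $C_4$ is even and has determinant zero). I would then enumerate, up to the $S_5$-action, the ways of attaching the remaining edges as a forest of "petals" (pendant trees) onto the core $\{x^2\}$, $C_3$, or $C_5$; this is a small finite check producing the degree-two classes. Non-cohesive sets are excluded by Lemma \ref{lema:cohesive}. For the general (possibly non-cohesive) count I must also recall that the canonical restrictions plus cohesiveness are forced for a Cremona set, so nothing is lost.

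For $d=3$ I would argue similarly but now with the clutter $S_F$, a $3$-clutter on $5$ vertices with $5$ edges satisfying the canonical restrictions. If $S_F$ has a leaf, Proposition \ref{prop:DLP} reduces to a Cremona set in $4$ variables of degree $3$, which by Proposition \ref{prop:duality} corresponds to a degree-$1$ set in $4$ variables — i.e. essentially the $\P^3$ list — so these come from coning/attaching leaves onto already-classified smaller maps. If $S_F$ has a root (a vertex of incidence degree $n-1=4$), the Plucking Roots Principle (Corollary \ref{cor:PRP}) reduces to a degree-two Cremona set in $4$ variables, again already understood. The genuinely new cases are the clutters with all incidence degrees strictly between $2$ and $n-1$, i.e. the "leafless, rootless" $3$-clutters; here the incidence equation \eqref{eq:incidence} gives $a_1+\cdots+a_5 = 15$ with each $a_i \in \{2,3\}$, forcing the multiset $\{3,3,3,3,3\}$ or $\{3,3,3,3,3\}$ — in fact $\{a_i\}$ must be $(3,3,3,3,3)$ is impossible since that sums to $15$: so it is exactly $(3,3,3,3,3)$, a single incidence profile, greatly restricting the combinatorics — and I would enumerate those clutters up to $S_5$ and apply the DPB, Proposition \ref{prop:DPB}, to decide which have log-matrix determinant $\pm 3$. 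Collating the degree-$1$, degree-$2$, degree-$3$, and degree-$4$ classes and removing the duplications flagged by Lemmas \ref{dual} and \ref{sequencia de grau} yields the total of ten.

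The main obstacle will be the degree-three leafless-rootless enumeration: organizing the finitely many $3$-clutters on $5$ vertices with $5$ edges and the forced incidence profile into $S_5$-orbits without double-counting, and then checking the determinantal criterion for each. This is exactly the bookkeeping that the counting Lemmas \ref{processo}--\ref{sequencia de grau} (and, ultimately, the SageMath algorithm of Section 4) are designed to automate; in the $n=5$ case it remains small enough to do by hand, but it is where all the real content of the count lies, the other cases being immediate consequences of DLP, PRP, duality, and the already-established $\P^3$ classification.
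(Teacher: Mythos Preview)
Your strategy takes a much longer route than necessary, and the detour stems from a misreading of the Duality Principle. You write that ``$d=2$ and $d=3$ are swapped by duality only when the number of variables makes $d$ and $n-d$ coincide, which does not happen here,'' but this is backwards: the map $F \mapsto F^{\vee}$ \emph{always} sends a degree-$d$ set in $n$ variables to a degree-$(n-d)$ set, and by Lemma~\ref{dual} together with $(F^{\vee})^{\vee}=F$ it descends to a bijection between $S_n$-orbits. Since here $n=5$, duality pairs $d=2$ with $d=3$ directly, so the two counts are equal with no further checking required. That is exactly the paper's argument: one class each for $d=1$ and $d=4$; four classes for $d=2$ by Theorem~\ref{prop:degreetwo} (the three connected graphs on five vertices containing a $C_3$ plus two pendant edges, and the pentagon $C_5$); and then four classes for $d=3$ obtained simply as the dual complements of those four graphs. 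Total ten. No independent analysis of $d=3$ is carried out.

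Your proposed leaf/root/leafless--rootless trichotomy for $d=3$ could in principle be pushed through, but it is not where ``all the real content of the count lies'' --- in fact there is no new content at all. The leafless--rootless case you flag as the hard one forces, as you correctly compute, the uniform incidence sequence $(3,3,3,3,3)$; its dual is then a $2$-regular simple graph on five vertices, hence $C_5$, so there is exactly one such class and it is already visible in the $d=2$ list. Moreover your three cases are not disjoint: a $3$-clutter can have both a leaf and a root (the dual of the $C_3$ with two pendant edges attached at the same vertex is such an example, with incidence sequence $(4,4,3,3,1)$), so extra bookkeeping would be needed to avoid double counting. All of this is bypassed once Proposition~\ref{prop:duality} is applied as intended.
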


\begin{proof}

If the degree $d=1$ or $d=4$, we have only one Cremona monomial map. \\

By Duality Principle, Proposition \ref{prop:duality}, the number of square-free monomial Cremona maps of degree $d=2$ and $d=3$ are the same. Furthermore, 
by Theorem \ref{prop:degreetwo}, the possible square free Cremona sets of degree two have the following graphs:

\medskip

\begin{figure}[h!]
\centering
\hspace{-0.6cm}
\begin{minipage}[h]{0.31 \linewidth}
\centering  
\includegraphics[height=0.8in]{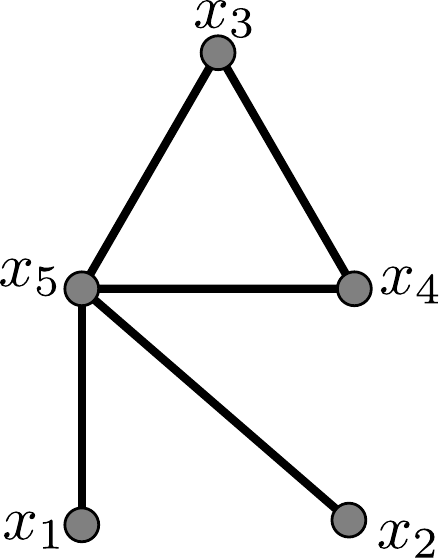} \\
\caption{\\Graph $G1$}\label{fig_2}
\end{minipage}
\hspace{-1.2cm}
\begin{minipage}[h]{0.31 \linewidth}
\centering
\includegraphics[height=0.8in]{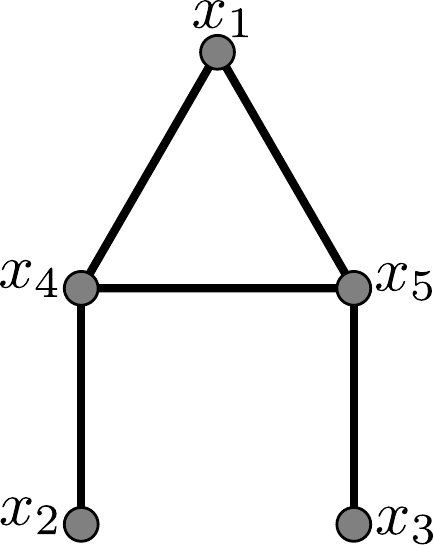}  \\
\caption{\\Graph $G_2$}\label{fig_3}
\end{minipage}
\hspace{-1.3cm}
\begin{minipage}[h]{0.31\linewidth}
\centering
\includegraphics[height=0.8in]{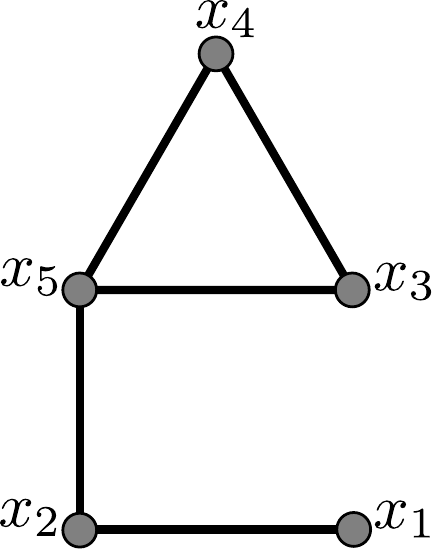}  \\
\caption{\\Graph $G_3$}\label{fig_4}
\end{minipage}
\hspace{-1.2cm}
\begin{minipage}[h]{0.31\linewidth}
\centering
\includegraphics[height=0.8in]{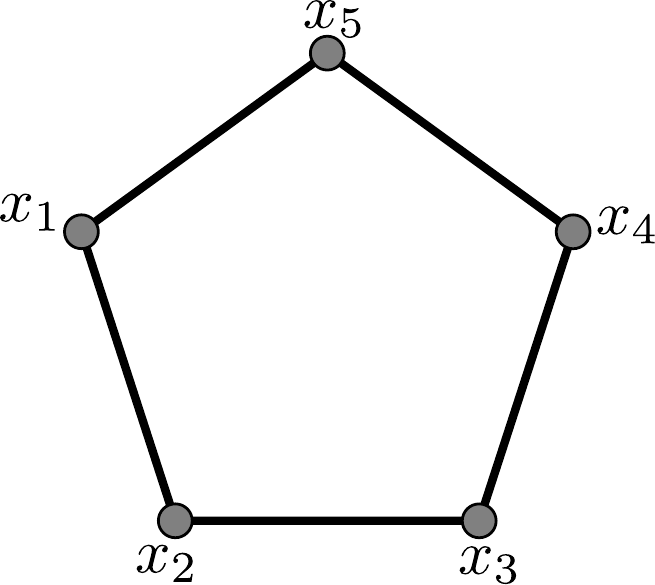}  \\
\caption{\\Graph $G_4$}\label{fig_5}
\end{minipage}

\end{figure}

%
\medskip

The dual complement of such Cremona sets are the square free Cremona sets of degree three, the associated clutter are the clutter duals of the preceding graphs:
 
\begin{figure}[h!] 
\centering
\hspace{-0.6cm}
\begin{minipage}[h]{0.31 \linewidth}

\centering
\includegraphics[height=.9in]{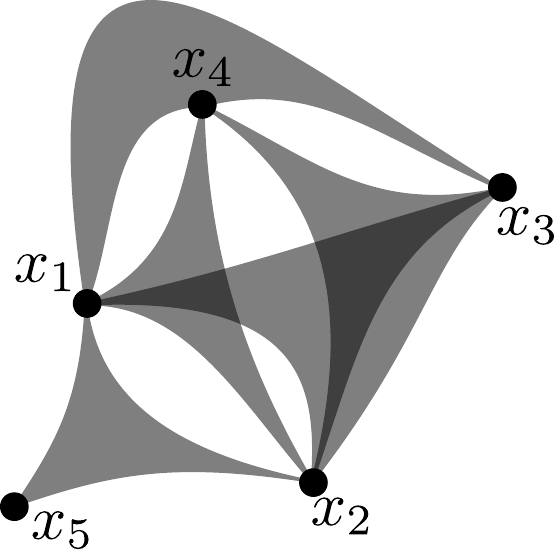}
\caption{Clutter \\$S_1=G_1^{\vee}$}
\end{minipage}
\hspace{-1.0cm}
\begin{minipage}[h]{0.31 \linewidth}
\centering
\includegraphics[height=.9in]{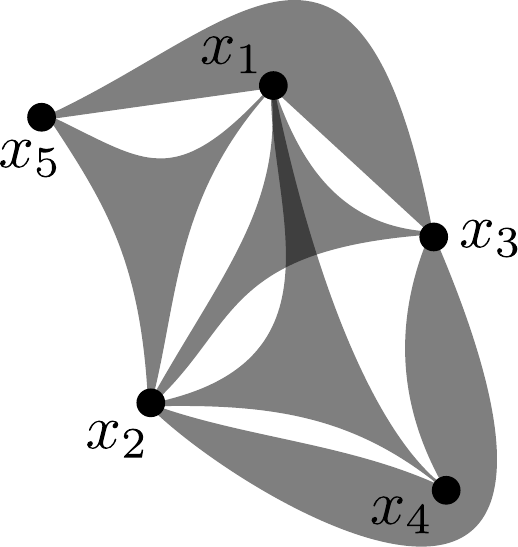}
\caption{Clutter \\$S_2 = G_2^{\vee}$}
\end{minipage}
\hspace{-1.1cm}
\begin{minipage}[h]{0.31\linewidth}
\centering
\includegraphics[height=.9in]{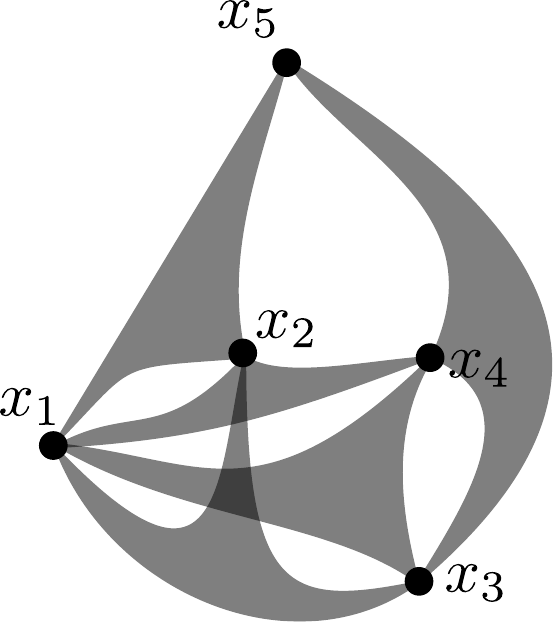}
\caption{Clutter \\$S_3 = G_3^{\vee}$}
\end{minipage}
\hspace{-1.2cm}
\begin{minipage}[h]{0.31\linewidth}
\centering
\includegraphics[height=.9in]{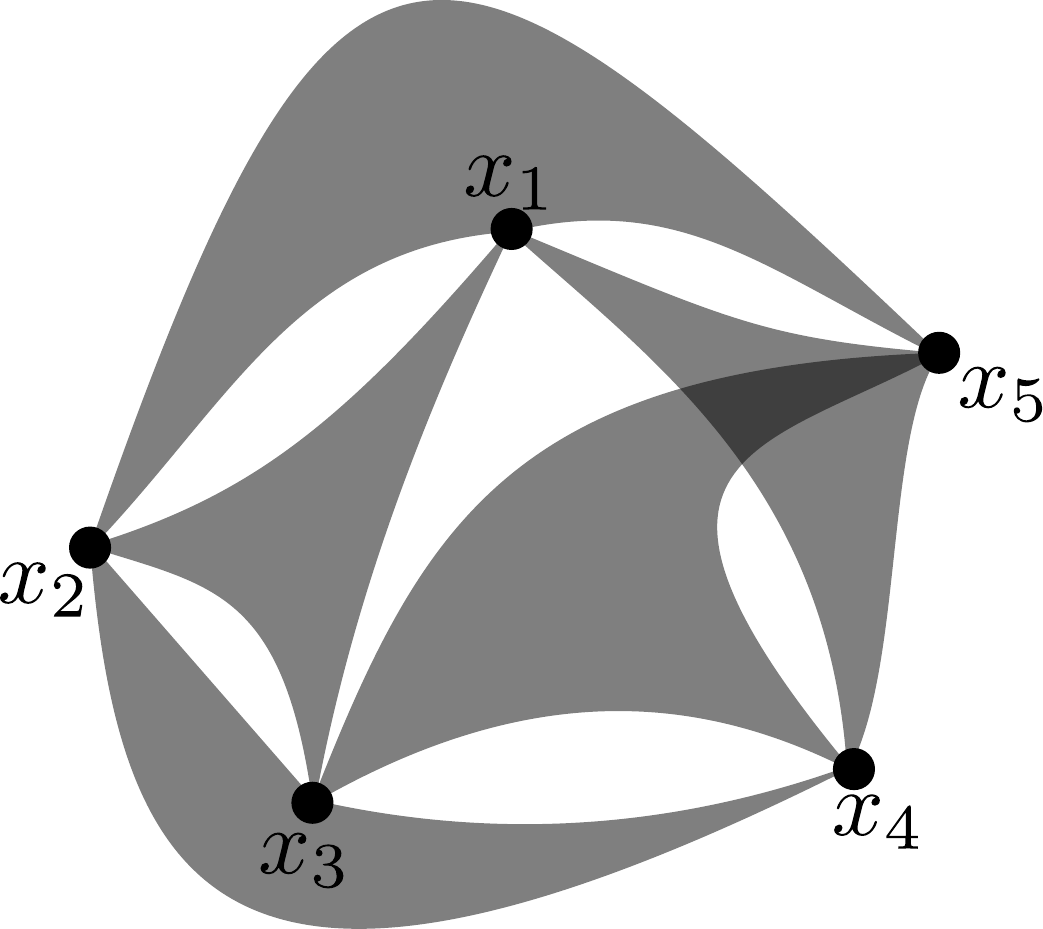}
\caption{Clutter \\$S_4 = G_4^{\vee}$}
\end{minipage}
\end{figure}
\medskip
 \end{proof}

\subsection{Counting Cremona sets in $\mathbb{P}^{5}$}

Our main result is the following Theorem:

\begin{thm}\label{thm:main}
 There exist fifty-eight equivalence classes of square free monomial Cremona transformations in $\P^5$.
\end{thm}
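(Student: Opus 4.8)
The plan is to stratify the count by the common degree $d$ of the defining monomials. In $\P^5$ we have $n=6$, so $d$ ranges over $\{1,2,3,4,5\}$, and the Duality Principle (Proposition \ref{prop:duality}) lets us fold the case $d=4$ onto $d=2$ and $d=5$ onto $d=1$, since a set satisfying the canonical restrictions (Definition \ref{defin:restricoescanonocas}) is Cremona if and only if its dual complement is. The degrees $d=1$ and $d=5$ each contribute exactly one class -- the identity and, dually, the standard Cremona inversion $f_j=x_1\cdots x_6/x_j$ -- so it remains to count the classes $c_2$ in degree $2$ and $c_3$ in degree $3$, and to verify $1+c_2+c_3+c_2+1=58$.

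For $d=2$ I would invoke Theorem \ref{prop:degreetwo}: a cohesive square-free set of quadratic monomials in six variables satisfying the canonical restrictions is a Cremona set if and only if its graph $G_F$ is connected, loop-free (square-freeness rules out the tree-with-one-loop alternative), and has a unique cycle, necessarily of odd length. Since a connected simple graph on six vertices with six edges is unicyclic, one enumerates up to the $S_6$-action the unicyclic graphs on six vertices whose cycle has length $3$ or $5$. Contracting the cycle reduces this to listing the (rooted) trees on four vertices and distributing the attachment points of the pendant trees among the cycle vertices, modulo the cycle symmetry -- a short finite check producing the value $c_2$, after which $c_2$ is also the count for $d=4$ by duality.

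The hard part is $d=3$ in six variables, which is exactly the content of Propositions \ref{prop:d3n6t1}, \ref{prop:d3n6t2} and \ref{prop:d3n6t3}. Here the associated $3$-clutter $S_F$ has six vertices and six edges and the incidence equation \eqref{eq:incidence} reads $a_1+\cdots+a_6=18$ with $1\le a_i\le 5$. If some $a_i=1$, then $S_F$ has a leaf and the Deleting Leaves Principle (Proposition \ref{prop:DLP}) reduces $F$ to a cubic square-free Cremona set in five variables, already classified (as the dual family) in Theorem \ref{cor:p4}; if some $a_i=5$, then $S_F$ has a root and the Plucking Roots Principle (Corollary \ref{cor:PRP}) reduces $F$ to a quadratic Cremona set in five variables, classified via Theorem \ref{prop:degreetwo}. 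The genuinely new cases thus satisfy $2\le a_i\le 4$ for all $i$, and the incidence equation then leaves only the incidence-degree sequences $(3,3,3,3,3,3)$, $(4,3,3,3,3,2)$, $(4,4,3,3,2,2)$ and $(4,4,4,2,2,2)$. For each such sequence I would generate the $S_6$-orbits of admissible clutters by the iterative procedure of Lemma \ref{processo}, pruning repetitions with Lemmas \ref{isomorfas}, \ref{dual}, \ref{cone} and \ref{sequencia de grau} -- that is, using the incidence-degree sequence, the maximal-cone type, and the dual clutter as orbit invariants -- and finally test the Determinantal Principle of Birationality (Proposition \ref{prop:DPB}), $\det A_F=\pm 3$, on the survivors.

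The main obstacle is precisely this last enumeration: the number of $3$-clutters on six vertices is large, and the delicate point is not verifying the determinant criterion but organizing the orbit bookkeeping so that no equivalence class is counted twice or overlooked; this is why the argument is split into the three Propositions \ref{prop:d3n6t1}--\ref{prop:d3n6t3} (one per structural family) and independently cross-checked by the SageMath algorithm of Section~4. Adding the contributions, $1+c_2+c_3+c_2+1=58$, which proves the theorem.
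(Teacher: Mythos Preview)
Your top-level architecture matches the paper's exactly: stratify by $d$, fold $d=4,5$ onto $d=2,1$ by duality, count $d=2$ via Theorem~\ref{prop:degreetwo}, and defer the $d=3$ count to Propositions~\ref{prop:d3n6t1}--\ref{prop:d3n6t3}. Two points, however, deserve tightening. First, your trichotomy ``some $a_i=1$ / some $a_i=5$ / all $2\le a_i\le 4$'' is not a partition, since Cremona sets with incidence sequence such as $(5,4,3,3,2,1)$ have both a leaf and a root; the paper instead uses ``leaf and no root / root / neither'' (Remark~\ref{rmk:3types}). More importantly, your description of the first two cases as ``already classified'' via DLP and PRP is too quick: those principles show that deleting the leaf or plucking the root produces one of the four $\P^4$ Cremona sets, but this correspondence is many-to-one on equivalence classes. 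The bulk of Propositions~\ref{prop:d3n6t1} and~\ref{prop:d3n6t2} is precisely the reattachment count --- for each of the four small sets $F'$, enumerating the $S_6$-orbits of the extra monomial using the stabilizer of $F'$ (Lemma~\ref{isomorfas}) and then separating the resulting classes by invariants --- which yields $10$ and $20$ rather than $4$ and $4$.

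For the leafless, rootless case your plan (list the admissible incidence sequences and run the iterative orbit procedure of Lemma~\ref{processo}) is viable but differs from the paper's route. The paper first excludes the doubly stochastic sequence $(3,3,3,3,3,3)$ a priori via \cite[Proposition~5.6]{SV2}, then uses Lemma~\ref{mdc} to force any such $F$ to contain a cone of size four over $x_1$ whose base is one of six specific graphs on $\{x_2,\dots,x_6\}$; a direct determinant computation kills three of these bases outright, and only then are the remaining two monomials $h_1,h_2$ adjoined and their orbits sorted. This cone-first approach keeps the search space much smaller than an incidence-sequence sweep, at the cost of a slightly less mechanical argument.
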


\begin{proof}
 Let $d$ be the degree of the Cremona set, we have $1 \leq d \leq 5$. We have only one Cremona set for either $d=1$ or $d=5$, and they are dual. 
 For $d=2$, or dually $d=4$, we have eight possibilities, see Proposition \ref{prop:d2n6}. In total sixteen. \\
 For $d=3$ there are forty nonequivalent monomial Cremona sets according to Propositions \ref{prop:d3n6t1}, \ref{prop:d3n6t2} and \ref{prop:d3n6t3}.
\end{proof}

We give a complete description of these Cremona sets by drawing the associated clutters.
 
 \begin{prop}\label{prop:d2n6}
 There are eight equivalence classes of square free monomial Cremona sets of degree $2$ in $\K[x_1,\ldots,x_6]$. 
 \end{prop}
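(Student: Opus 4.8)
The plan is to enumerate the eight classes via the structural reductions developed earlier, exploiting the duality principle to cut the work roughly in half. Since we are in the case $n=6$, $d=2$, a square-free monomial set of degree two corresponds to a $2$-clutter on six vertices with six edges, i.e.\ a simple graph $G_F$ with six vertices and six edges. By Lemma~\ref{lema:cohesive} and the discussion of cohesiveness, the relevant graphs are connected; a connected graph on six vertices with six edges has exactly one cycle (first Betti number one), so $G_F$ is a unicyclic graph, possibly with one loop if $F$ contains a square. Moreover the canonical restrictions force every vertex to have incidence degree between $1$ and $5$. By Theorem~\ref{prop:degreetwo}, $F$ is a Cremona set precisely when either $G_F$ is loopless with its unique cycle of odd length, or $G_F$ is a tree with exactly one loop. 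So the enumeration reduces to a purely graph-theoretic count of such graphs up to isomorphism.

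First I would handle the loopless case. The unique cycle must be odd, hence of length $3$ or $5$ (length $1$ is a loop, already excluded here). If the cycle is a $5$-cycle, the sixth vertex is attached to it by a single edge (a leaf), and by the vertex-transitivity of $C_5$ there is exactly one such graph up to isomorphism. If the cycle is a $3$-cycle, the remaining three vertices and three edges form, together with the attaching edges, a forest of rooted trees hanging off the triangle; counting these amounts to distributing a forest with three extra edges among the three (interchangeable) triangle vertices. Enumerating rooted-forest shapes on three additional vertices and then quotienting by the $S_3$-symmetry of the triangle yields a short finite list — I expect on the order of four or five non-isomorphic graphs here. This is the combinatorial heart of the argument and the step I expect to be the main obstacle, since one must be careful not to overcount configurations related by the triangle's automorphisms or by automorphisms of the attached trees; organizing the count by the partition of the three pendant vertices among the triangle's legs (i.e.\ which of the patterns $3$, $2+1$, $1+1+1$ occurs, and within each how the edges are arranged) keeps it manageable.

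Next I would handle the tree-with-one-loop case: here $G_F$ is a tree on six vertices together with a single loop at one of its vertices. Up to isomorphism this is a choice of a tree on six vertices and a choice of vertex orbit for the loop. There are six trees on six labelled-up-to-isomorphism vertices (the path, the ``chair''/broom variants, the spider $S(1,1,3)$, the double star, the star, etc.), and for each one counts the number of vertex orbits under its automorphism group to place the loop. Summing these gives the count for this family. Finally, I would cross-check the total against the duality principle, Proposition~\ref{prop:duality}: the degree-$2$ classes must be in bijection with the degree-$4$ classes, and the DLP/PRP reductions of Proposition~\ref{prop:DLP} and Corollary~\ref{cor:PRP} should be consistent with the list. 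Adding the loopless count and the looped count should give exactly eight; presenting the associated clutters (as the paper does) then exhibits the eight representatives explicitly, completing the proof.
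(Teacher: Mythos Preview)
Your overall strategy---invoke Theorem~\ref{prop:degreetwo} and enumerate the admissible graphs on six vertices---is the same as the paper's. But there is a genuine error: you include the tree-with-one-loop case (ii) of Theorem~\ref{prop:degreetwo}. A loop in $G_F$ corresponds to a monomial $x_i^2$, and you yourself note this (``possibly with one loop if $F$ contains a square''). Since the proposition concerns \emph{square-free} Cremona sets, case (ii) is excluded outright; only case (i) applies. If you carried out your plan as written, the looped-tree family would contribute many additional classes (there are six trees on six vertices, each with several vertex orbits for the loop), and you would badly overshoot eight.

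Once you drop the loop case, the loopless enumeration alone already gives exactly eight. Your estimate of ``four or five'' for the $3$-cycle family is too low: organizing by the partition of the three pendant vertices among the triangle vertices gives $4$ graphs for the partition $3{+}0{+}0$ (the four rooted trees on four vertices), $2$ graphs for $2{+}1{+}0$ (the two rooted trees on three vertices), and $1$ graph for $1{+}1{+}1$, totaling $7$. Adding the single $5$-cycle-with-pendant graph gives $8$, which matches the paper's list $G_5,\ldots,G_{12}$. The duality check you propose is fine as a sanity check but is not needed for the count.
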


 \begin{proof} According to Theorem \ref{prop:degreetwo} we have the following possibilities for the associated graph of such a Cremona set. 

 \medskip

\begin{figure}[!htbp]
\captionsetup{justification=centering}
\centering
\hspace{0.2cm}
\begin{minipage}[h]{0.25 \linewidth}
\hspace{-.2cm}
\centering
\includegraphics[height=.7in]{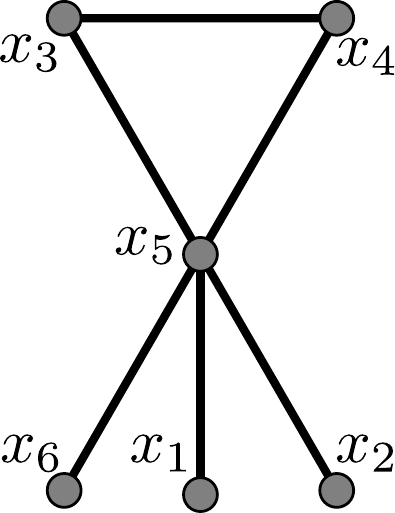}\\
\caption{\\Graph $G_5$}
\end{minipage}
\centering
\hspace{-0.2cm}
\begin{minipage}[h]{0.25 \linewidth}
\centering
\includegraphics[height=.7in]{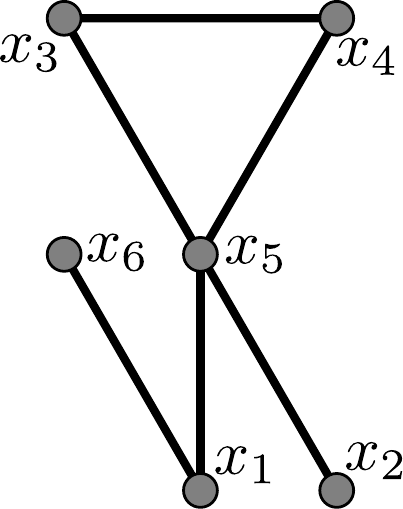}\\
\caption{\\Graph $G_6$}
\end{minipage}
\hspace{-0.1cm}
\begin{minipage}[h]{0.25\linewidth}
\centering
\includegraphics[height=.7in]{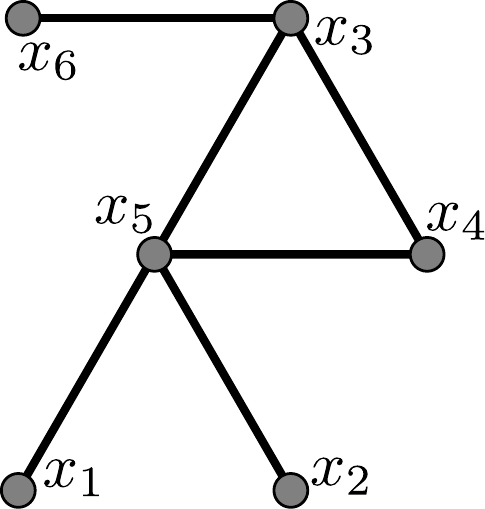}\\
\caption{\\Graph $G_7$}
\end{minipage}
\hspace{-0.5cm}
\begin{minipage}[h]{0.25 \linewidth}
\centering
\includegraphics[height=.7in]{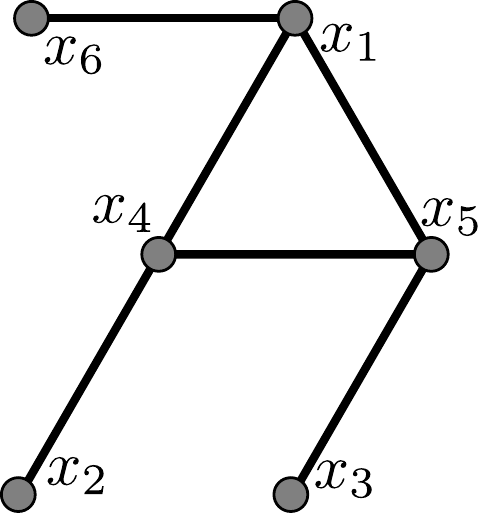}\\
\caption{\\Graph $G_8$}
\end{minipage}
\end{figure}


\begin{figure}[!h]
\captionsetup{justification=centering}
\centering
\hspace{-.2cm}
\begin{minipage}[h]{0.25 \linewidth}
\hspace{-.1cm}
\centering
\includegraphics[height=.7in]{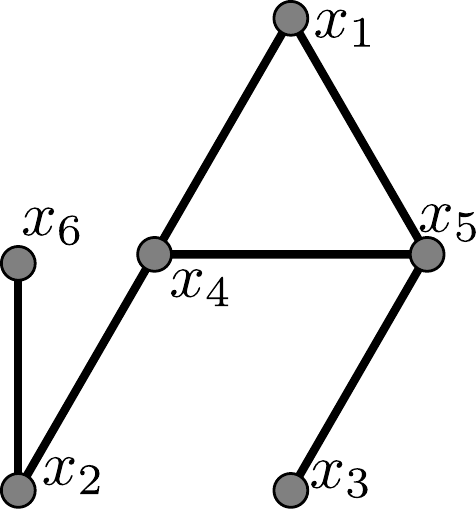}\\
\caption{\\Graph $G_9$}
\end{minipage}
\centering
\hspace{-.5cm}
\begin{minipage}[h]{0.25 \linewidth}
\centering
\includegraphics[height=.7in]{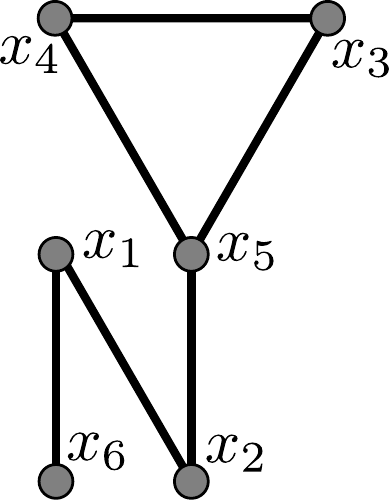}\\
\caption{\\Graph $G_{10}$}
\end{minipage}
\centering
\hspace{-.5cm}
\begin{minipage}[h]{0.25 \linewidth}
\centering
\includegraphics[height=.7in]{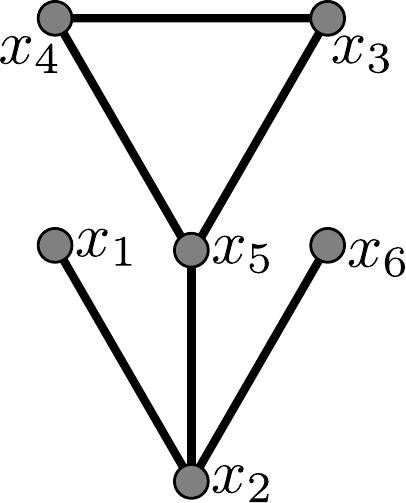}\\
\caption{\\Graph $G_{11}$}
\end{minipage}
\centering
\hspace{-0.5cm}
\begin{minipage}[h]{0.25 \linewidth}
\centering
\includegraphics[height=.7in]{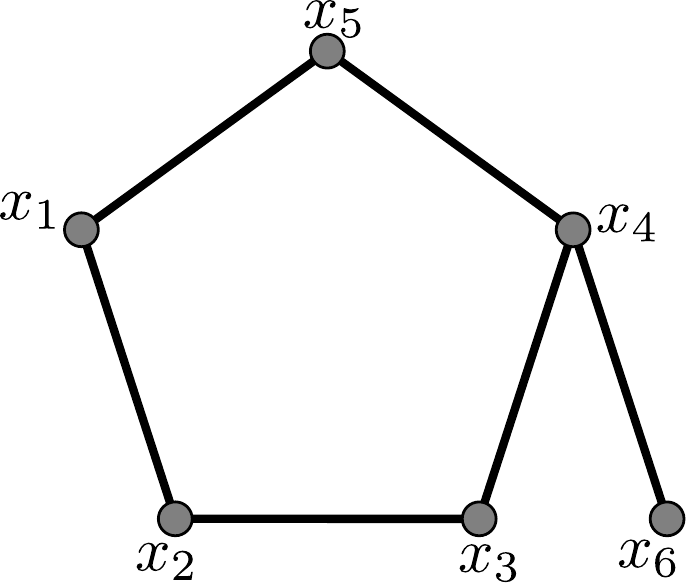}\\
\caption{\\Graph $G_{12}$}
\end{minipage}
\end{figure}
 
 \end{proof}

Let us consider square free monomial Cremona transformations of $\mathbb{P}^5$ as a set of $n=6$ square free monomials of degree $d=3$. The corresponding log-matrix is a $6\times 6$ $3$-stochastic matrix 
whose determinant is $\pm 3$ by the DPB, Proposition \ref{prop:DPB}.

\begin{rmk}\label{rmk:3types}\rm 
From now on we deal with the following setup: $A_F$ is a $6 \times 6$ $3$-stochastic matrix with eighteen entries $0$ and eighteen entries $1$ and
whose determinant is $\pm 3$. Furthermore, since the dimension $\dim R = 6$ and the degree $d=3$ are not coprime, $A_F$ can not
be doubly stochastic, see \cite[Proposition 5.6]{SV2}.
So there is a row of the matrix $A_F$ with at least four entries $1$ and it has three possible types:
\begin{enumerate}
\item $A_F$ does not have a row with five entries $1$ but has a row with only one entry $1$. The clutter has one leaf but has no root;

\item $A_F$ has a row with five entries $1$. The associated clutter has a root;

\item $A_F$ does not have a row with five entries $1$ neither a row with only one entry $1$. The clutter is leafless and has no root.
\end{enumerate}

\end{rmk}

\begin{lema}\label{mdc}
Let $F=\{f_1,\ldots,f_6\} \subset \K[x_1,\ldots,x_6]$ be cubic monomials defining a Cremona transformation of $\mathbb{P}^5$.
Then for each choice of $4$ monomials of $F$ there are $2$ of them whose $\operatorname{gdc}$ is of degree $2$.
 \end{lema}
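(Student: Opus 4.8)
The plan is to pass to the clutter $S_F=(V,E)$ and turn the statement into a parity obstruction on $\det A_F$. For square-free monomials $f_i=\underline{x}^{v_i}$ with associated edges $e_i\in E$ one has $\deg\gcd(f_i,f_j)=|e_i\cap e_j|$; and since $\det A_F=\pm 3\neq 0$ the columns of $A_F$ are pairwise distinct, so the $f_i$ are pairwise distinct and $|e_i\cap e_j|\le 2$ whenever $i\neq j$. Hence it is enough to prove that no four of the edges can be pairwise intersecting in at most one vertex; equivalently, to derive a contradiction from the assumption (after relabeling) that $|e_i\cap e_j|\le 1$ for all $1\le i<j\le 4$.

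Next I would extract the combinatorial structure forced by that assumption. Put $d_v=\#\{\,i\le 4: v\in e_i\,\}$ for $v\in V$. Double counting in the $6\times 4$ submatrix gives $\sum_{v\in V}d_v=\sum_{i=1}^4|e_i|=12$ and $\sum_{v\in V}\binom{d_v}{2}=\sum_{i<j}|e_i\cap e_j|\le \binom{4}{2}=6$. Since $|V|=6$ and $x\mapsto\binom{x}{2}$ is convex, the equality $\sum_v d_v=12$ already forces $\sum_v\binom{d_v}{2}\ge 6$, with equality only if $d_v=2$ for every $v$. Therefore $d_v=2$ for all $v$, and every $e_i\cap e_j$ consists of exactly one vertex. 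Sending $v$ to the pair $p(v)=\{i,j\}$ with $v\in e_i\cap e_j$ then gives an injection $V\hookrightarrow\binom{\{1,2,3,4\}}{2}$ between two $6$-element sets, hence a bijection. Consequently the $6\times 4$ submatrix $B$ of $A_F$ whose columns are the log vectors of $f_1,f_2,f_3,f_4$ is, up to a permutation of its rows, precisely the vertex--edge incidence matrix of the complete graph $K_4$.

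I would then compute all $4\times 4$ minors of $B$. Selecting $4$ of the $6$ rows of $B$ is the same as selecting a $4$-edge subgraph of $K_4$ on its $4$ vertices; since $K_4$ is $3$-edge-connected such a subgraph is connected and contains exactly one cycle, which is either a triangle or a $4$-cycle (the two cases are $K_4$ minus two adjacent edges and $K_4$ minus a perfect matching). By Lemma \ref{lema:cycle} applied to the cycle --- after deleting the pendant leaf by a Laplace expansion, in the spirit of Proposition \ref{prop:DLP}, in the triangle-plus-pendant case --- the resulting $4\times 4$ determinant is $\pm 2$ for a triangle and $0$ for a $4$-cycle; in either case it is even. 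Finally, expanding $\det A_F$ by the generalized Laplace rule along the two columns coming from $f_5,f_6$ writes $\det A_F$ as a signed sum of products of a $2\times 2$ minor (taken from those two $0/1$ columns, hence in $\{0,\pm1\}$) with the complementary $4\times 4$ minor of $B$ (even, by the above). Thus $\det A_F$ is even, contradicting $\det A_F=\pm 3$ from the DPB, Proposition \ref{prop:DPB}. So among any four of the $f_i$ some pair has $\gcd$ of degree $\ge 2$, and by distinctness exactly $2$.

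The step I expect to be the crux is the rigidity in the second paragraph: once $B$ is recognised as the incidence matrix of $K_4$, the parity argument is immediate, so the real content is checking that ``all six pairwise intersections have size $\le 1$'' genuinely collapses to the perfectly balanced configuration $d_v\equiv 2$ --- which is exactly what the convexity estimate $\sum_v\binom{d_v}{2}\ge 6$ delivers. A secondary point requiring a little care is the bookkeeping of sizes and signs in the generalized Laplace expansion, but since only the parity of each summand matters, the signs are irrelevant to the conclusion.
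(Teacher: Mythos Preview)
Your proof is correct and follows the same overall arc as the paper: assume four edges pairwise meet in at most one vertex, force the $6\times 4$ submatrix to be the edge--vertex incidence matrix of $K_4$, and conclude that $\det A_F$ is even, contradicting the DPB.

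Where the paper simply asserts both steps (``it is easy to see'' for the $K_4$ rigidity; ``by applying elementary operations in the rows'' for the parity), you supply genuine arguments. Your double--counting plus convexity argument for the rigidity is cleaner and more informative than an ad hoc case check: it shows directly that $d_v\equiv 2$ and that the bijection $v\mapsto p(v)$ is forced. For the parity step, your route via the generalized Laplace expansion and Lemma~\ref{lema:cycle} is correct but slightly heavier than necessary: once you know $d_v=2$ for every $v$, the four columns of $B$ sum to $2\cdot\mathbf{1}$, so they are linearly dependent over $\Z/2\Z$ and $\det A_F$ is even immediately---this is presumably the ``elementary operation'' the paper has in mind. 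Your minor analysis (triangle-plus-pendant versus $4$-cycle, using $3$-edge-connectivity of $K_4$) is a pleasant alternative that makes contact with Lemma~\ref{lema:cycle} and Proposition~\ref{prop:DLP}.
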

 \begin{proof} Suppose, on teh contrary, that there are four monomials $f_1,f_2,f_3,f_4$ such that
 $\deg(\operatorname{gcd}(f_i,f_j)) \leq 1$. On the log matrix it imposes the existence of a $6 \times 4$ sub-matrix whose
 all $6 \times 2$ sub-matrices have at most one line with two entries $1$. Let us consider, up to a permutation, $f_1=x_1x_2x_3$.
It is easy to see that, up to a permutation, the log-matrix of these four vectors must be of the form:
$$\left[ \begin{array}{cccc}
1 & 1 & 0 & 0\\
1 & 0 & 1 & 0\\
1 & 0 & 0 & 1\\
0 & 1 & 1 & 0\\
0 & 1 & 0 & 1\\
0 & 0 & 1 & 1\\
\end{array} \right].$$
On the other side, by applying elementary operations in the rows, we can prove that  the log-matrix of any set
$F=\{x_1x_2x_3, x_1x_4x_5,x_2x_4x_6,x_3x_5x_6,f_5,f_6\}$
has even determinant. This contradicts our hypothesis that the set of monomials defines a Cremona transformation.
\end{proof}

\begin{prop} \label{prop:d3n6t1}
There are $10$ equivalence classes of square free Cremona monomials of degree $3$ in $\K[x_1, \ldots, x_6]$
whose log-matrix are of type $1$.
\end{prop}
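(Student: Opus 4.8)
\noindent\emph{Sketch of proof.} The plan is to obtain every type~$1$ Cremona set by attaching a single leaf to a square free cubic Cremona set in five variables, using the Deleting Leaves Principle (Proposition~\ref{prop:DLP}), and then to organise the resulting list with the Counting Lemmas.

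First I would pin down the possible ``bases''. Let $F=\{f_1,\dots,f_6\}$ be a type~$1$ square free Cremona set. By Remark~\ref{rmk:3types} its clutter has a leaf and no root, so after a permutation $x_6\mid f_6$, $x_6^2\nmid f_6$ and $x_6\nmid f_i$ for $i\le 5$; write $f_6=x_6 g$ with $g$ a square free quadratic monomial in $x_1,\dots,x_5$ and set $F'=\{f_1,\dots,f_5\}$. By Proposition~\ref{prop:DLP}, if $F'$ failed the canonical restrictions then $F$ would not be a Cremona set, so $F'$ satisfies them and is then a square free cubic Cremona set in $\K[x_1,\dots,x_5]$. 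By Theorem~\ref{cor:p4} there are ten square free monomial Cremona maps in $\P^4$, one each in degrees $1$ and $4$, hence by the Duality Principle (Proposition~\ref{prop:duality}) four in degree $3$: the dual complements $F'_1,\dots,F'_4$ of the four graphs of square free quadratic Cremona sets in $\P^4$. A direct computation gives their incidence degree sequences as $(3,3,3,3,3)$, $(1,3,3,4,4)$, $(2,2,3,4,4)$ and $(2,3,3,3,4)$, so by Lemma~\ref{sequencia de grau} they are pairwise inequivalent.

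Next I would run the construction backwards. For each $F'_k$ and each square free quadratic monomial $g$ in $x_1,\dots,x_5$, the set $F_{k,g}:=F'_k\cup\{x_6 g\}$ satisfies the canonical restrictions, hence is a Cremona set by Proposition~\ref{prop:DLP}; it is of type~$1$ precisely when no $x_j$ with $j\le5$ becomes a root, i.e. when $g$ involves only variables whose incidence degree in $F'_k$ is at most $3$. So the admissible monomials $g$ are the edges of the complete graph on those variables, and by the previous paragraph every type~$1$ Cremona set is equivalent to some $F_{k,g}$. For a fixed base I would count these up to equivalence: since $F'_k$ does not involve $x_6$, its stabiliser in $S_6$ fixes $x_6$ and restricts to the automorphism group of the graph dual to $F'_k$, so by Lemma~\ref{isomorfas} it suffices to count the orbits of admissible edges under that group. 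Going through the four graphs one gets $2$, $2$, $2$ and $4$ such orbits, hence at most $2+2+2+4=10$ classes.

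The last step, which I expect to be the main obstacle, is to check that these $10$ classes are actually pairwise distinct, i.e. that no two built on different bases coincide and that the bookkeeping of the multiple leaf case hides nothing. Their incidence degree sequences are $(4,4,3,3,3,1)$ (five of them), $(4,4,4,3,2,1)$ (four of them) and $(4,4,4,4,1,1)$ (one of them); by Lemma~\ref{sequencia de grau} only candidates with equal sequences can be equivalent, and the one with sequence $(4,4,4,4,1,1)$ is alone in its class. Each of the remaining nine has a \emph{unique} leaf, so ``$F$ with its leaf removed'', taken up to $S_5$, is a well defined invariant of the equivalence class, equal to the base $F'_k$ that $F$ was built on; since the $F'_k$ are pairwise inequivalent, candidates with different bases are inequivalent, and candidates with the same base are inequivalent because their edges $g$ lie in distinct automorphism orbits (using Lemma~\ref{isomorfas}, and Lemma~\ref{cone} where convenient). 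Verifying in addition that the single multiple leaf case hides no coincidence then completes the count.
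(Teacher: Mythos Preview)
Your overall strategy matches the paper's exactly: use Proposition~\ref{prop:DLP} to peel off the leaf $x_6$, land on one of the four cubic square free Cremona sets $F'_1,\dots,F'_4$ in $\K[x_1,\dots,x_5]$ from Theorem~\ref{cor:p4}, and then enumerate the admissible quadratic monomials $g$ up to the stabiliser of $F'_k$. Your orbit counts $2+2+2+4=10$ agree with the paper's (the paper records them as $2,2,4,2$ under a different labelling of the $F'_k$).

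The one genuine difference is the distinctness argument. The paper separates the nine candidates sharing incidence sequences $(4,4,4,3,2,1)$ and $(4,4,3,3,3,1)$ by computing, for each clutter, the isomorphism types of its maximal cones (Lemma~\ref{cone}) and tabulating them. Your route---observing that those nine clutters have a \emph{unique} leaf, so ``delete the leaf'' gives a well-defined $S_5$-class, namely the base $F'_k$---is cleaner and avoids the cone bookkeeping entirely; it immediately separates candidates with different bases, and for a common base the isomorphism must fix $x_6$ and restrict to an element of $\operatorname{Stab}(F'_k)$ carrying one $g$ to the other, contradicting the choice of distinct orbit representatives. One small point: this last step is really the \emph{converse} direction to Lemma~\ref{isomorfas} (that lemma only gives a sufficient condition for two orbits to coincide), so you should state the short direct argument rather than cite the lemma. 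The mention of Lemma~\ref{cone} ``where convenient'' is then unnecessary, and your caution about the double-leaf case is harmless but vacuous, since that candidate is already isolated by its incidence sequence.
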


\begin{proof} Let $F \subset \K[x_1, \ldots, x_6]$ be such a set, that is, $F=\{F',mx_6\}$, for which $\{m\} \cup F' \subset K[x_1, \ldots, x_5]$. By DLP, Proposition \ref{prop:DLP}, $F'$ is a Cremona set.

We describe square-free monomial Cremona transformations of degree $3$ in $\mathbb{P}^4$ in corollary \ref{cor:p4}. 
There are $4$ equivalence classes of them. The associated clutters have the following representation:

\begin{figure}[h!] 
\centering
\hspace{-0.3cm}
\begin{minipage}[h]{0.26 \linewidth}
\centering
\includegraphics[height=0.7in]{figuras2/tipo1clutter1.pdf}\\
\caption{\\ Clutter  $F'_1$}
\end{minipage}
\hspace{-0.5cm}
\begin{minipage}[h]{0.26 \linewidth}
\centering
\includegraphics[height=0.7in]{figuras2/tipo1clutter2.pdf}
\caption{\\ Clutter  $F'_2$}
\end{minipage}
\hspace{-0.4cm}
\begin{minipage}[h]{0.26\linewidth}
\centering
\includegraphics[height=0.7in]{figuras2/tipo1clutter3.pdf}
\caption{\\ Clutter  $F'_3$}
\end{minipage}
\hspace{-0.2cm}
\begin{minipage}[h]{0.26 \linewidth}
\centering
\includegraphics[height=0.7in]{figuras2/tipo1clutter4.pdf}
\caption{\\ Clutter $F'_4$}
\end{minipage}
\end{figure} 


\medskip

Therefore, $F$ is of the form $\{F'_i,m_jx_6\}$, where $i=1, 2, 3, 4$ and
$m_j \in \K[x_1, \ldots, x_5]$ is a square free monomial of degree $2$. Let us study the possibilities for the last monomial. According to Lemma \ref{isomorfas} there are some orbits that coincide.

First of all notice that the incidence degree of $x_1$ and $x_2$ in both, $F'_1$ and $F'_2$ is $4$, therefore
$F=\{F'_i,m_jx_6\}$, with $i=1,2$, satisfying our hypothesis imposes $m_j \in \K[x_3,x_4,x_5]$. The stabilizer of $F'_1$ has generators $\beta=(1,2)$ and $\gamma=(3,4)$ and the stabilizer of $F'_2$ is generated by $\beta= (1,2)(4,5)$.
By Lemma \ref{isomorfas} we have $$\mathcal{O}_{\{F'_1,x_3x_5x_6\}}=\mathcal{O}_{\{F'_1,\gamma*(x_3x_5x_6)\}}. $$
By choosing one representative for each orbit we have two possibilities for $m_1$: $x_3x_4$ or $x_3x_5$, with associated clutters:

\medskip

\begin{figure}[h!] 
\captionsetup{justification=centering}
\centering
\hspace{-1cm}
\begin{minipage}[h]{0.26 \linewidth}
\centering

\includegraphics[height=.8in]{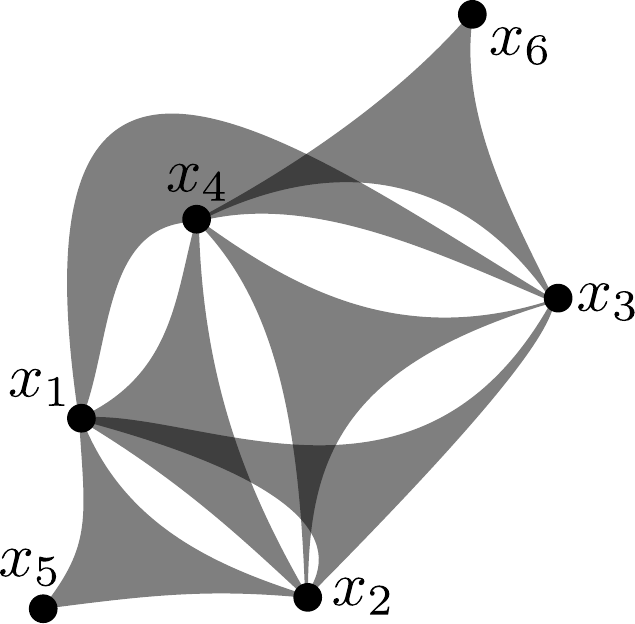}\\
\caption{Clutter  $G_1$  \\ $F'_1,x_3x_4x_6$}
\end{minipage}
\centering
\hspace{1cm}
\begin{minipage}[h]{0.26 \linewidth}
\centering
\includegraphics[height=.8in]{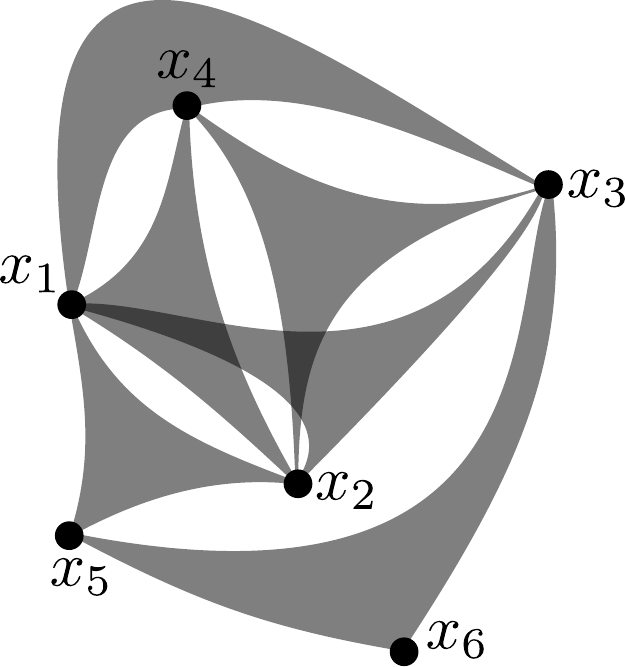}\\
\caption{Clutter $G_2$ \\ $F'_1,x_3x_5x_6$}
\end{minipage}

\end{figure} 
\medskip


By Lemma \ref{isomorfas}: 
$$\mathcal{O}_{\{F'_2,x_3x_4x_6\}}=\mathcal{O}_{\{F'_2,\beta*(x_3x_4x_6)\}}. $$
We have also two possibilities for $m_2$: $x_3x_4$ or $x_4x_5$. The associated clutters are:

\medskip

\begin{figure}[!htbp]
\captionsetup{justification=centering}
\centering
\hspace{-1cm}
\begin{minipage}[h]{0.26 \linewidth}
\centering
\includegraphics[height=.8in]{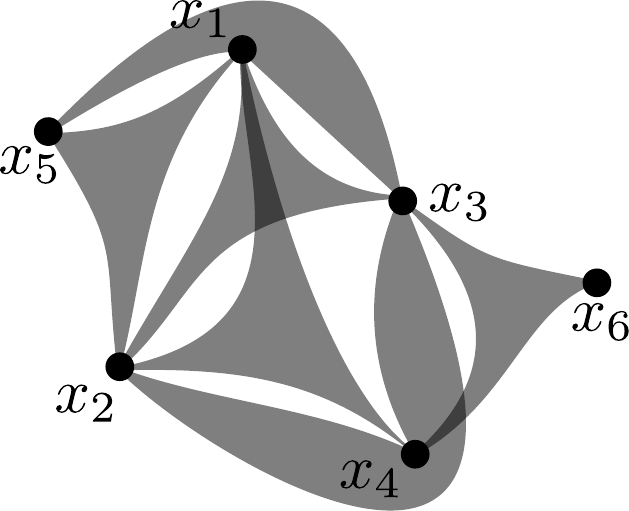}\\
\caption{Clutter $G_3$ \\ $F'_2,x_3x_4x_6$}
\end{minipage}
\hspace{2cm}
\begin{minipage}[h]{0.26 \linewidth}
\centering
\includegraphics[height=.8in]{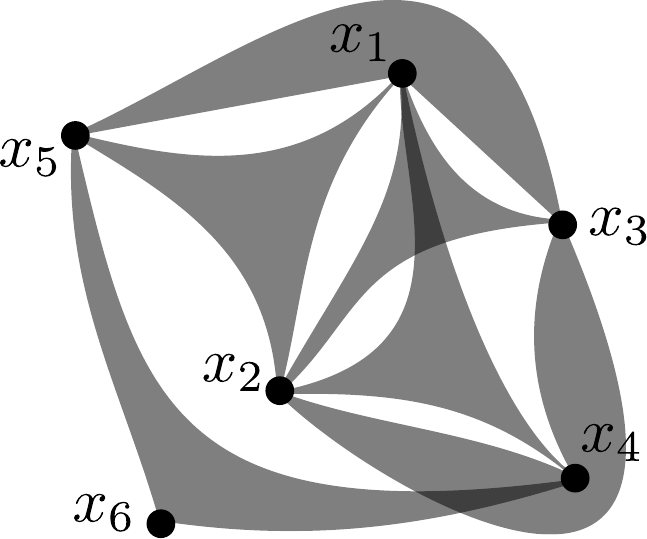}\\
\caption{Clutter $G_4$ \\ $F'_2,x_4x_5x_6$}
\end{minipage}
\end{figure} 

\medskip

In the same way, the incidence degree of $x_1$ in $F'_3$ is $4$, so $m_3 \in \K[x_2,x_3,x_4,x_5]$.

The stabilizer of $F'_3$ is generated by $\beta= (3,4)$.  By the Lemma \ref{isomorfas} we have
$$\mathcal{O}_{\{F'_3,x_2x_3x_6\}}=\mathcal{O}_{\{F'_3,\beta*(x_2x_3x_6)\}}\ \mbox{and}\ \mathcal{O}_{\{F'_3,x_3x_5x_6\}}=\mathcal{O}_{\{F'_3,\beta*(x_3x_5x_6)\}}. $$
Taking one representative for each orbit, the last monomial can be: $x_2x_3x_6$, $x_2x_5x_6$, $x_3x_4x_6$ or $x_3x_5x_6$. The associated clutters are:

\begin{figure}[h!] 
\centering
\hspace{-0.3cm}
\begin{minipage}[h]{0.26 \linewidth}
\centering
\includegraphics[height=.7in]{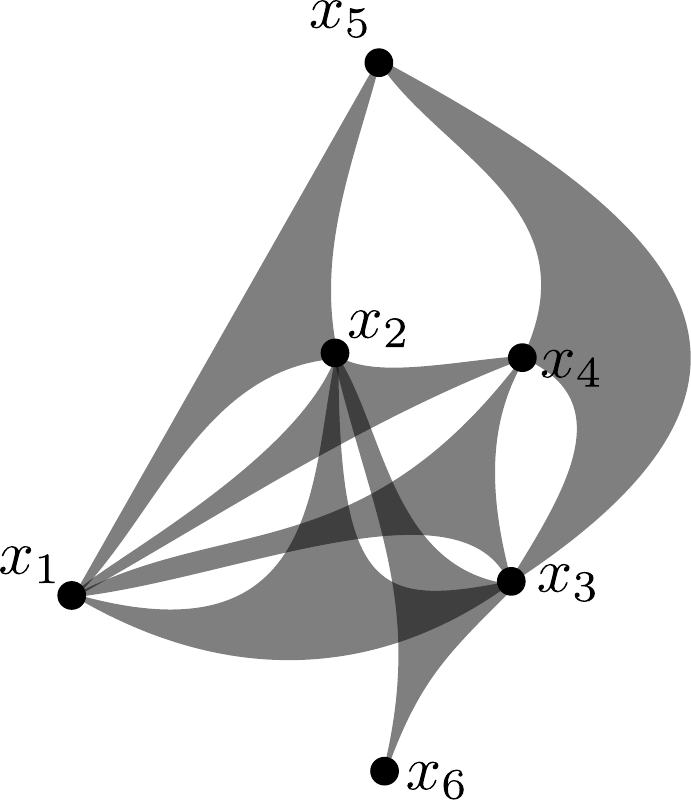}\\
\caption{\\ Clutter  $F'_1$}
\end{minipage}
\hspace{-0.5cm}
\begin{minipage}[h]{0.26 \linewidth}
\centering
\includegraphics[height=.7in]{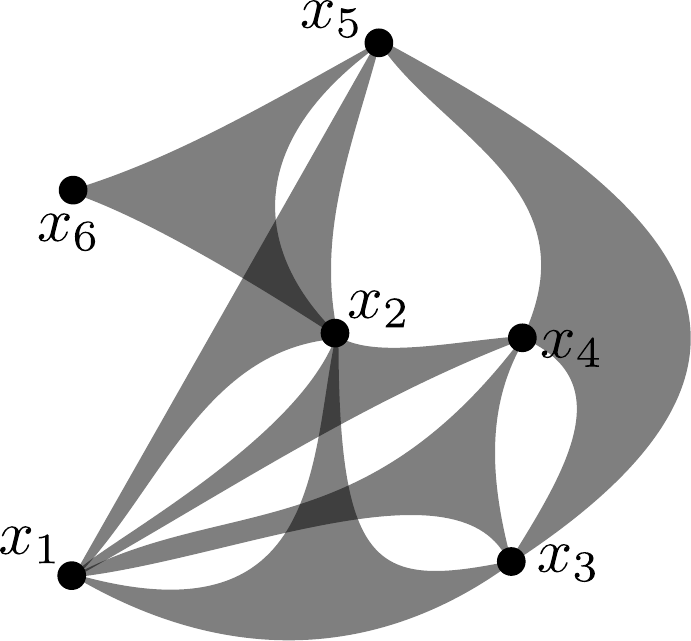}\\
\caption{\\ Clutter  $F'_2$}
\end{minipage}
\hspace{-0.4cm}
\begin{minipage}[h]{0.26\linewidth}
\centering
\includegraphics[height=.7in]{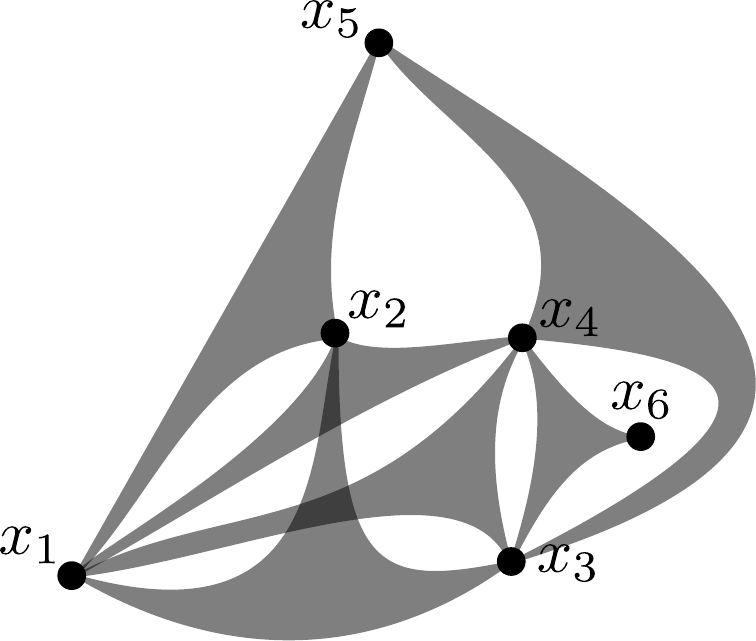}
\caption{\\ Clutter  $F'_3$}
\end{minipage}
\hspace{-0.5cm}
\begin{minipage}[h]{0.26 \linewidth}
\centering
\includegraphics[height=.7in]{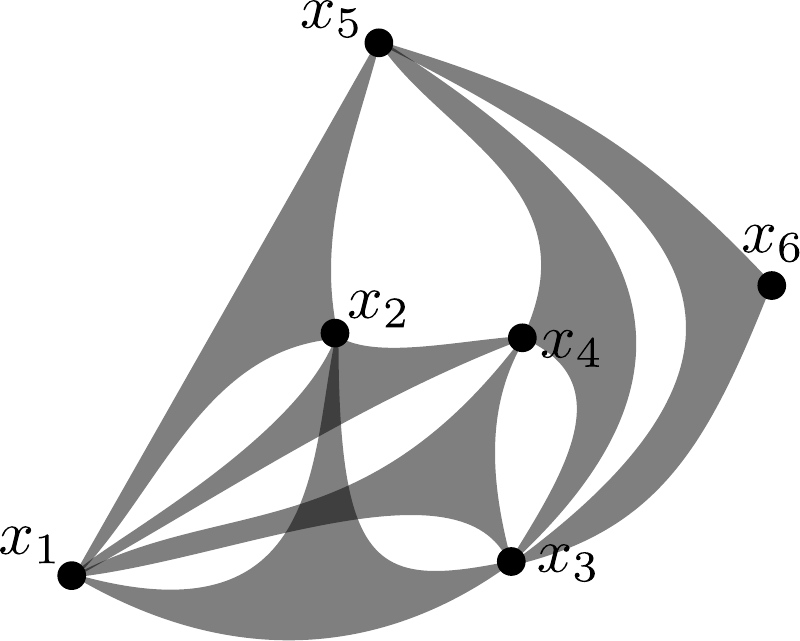}
\caption{\\ Clutter $F'_4$}
\end{minipage}
\end{figure}

\medskip
The stabilizer of $F'_4$ is generated by $\beta = (1,2,3,4,5)$.  By the Lemma \ref{isomorfas}
$$\mathcal{O}_{\{F'_4,x_1x_2x_6\}}=\mathcal{O}_{\{F'_4,\beta^{i}*(x_1x_2x_6)\}} \mbox{ and} \quad \quad \qquad$$ $$\mathcal{O}_{\{F'_4,x_1x_3x_6\}}=\mathcal{O}_{\{F'_4,\beta^i*(x_1x_3x_6)\}} \text{ for } i=1,2,3. $$
The possibilities for the last monomial are $x_1x_2x_6$ or $x_1x_3x_6$. The associated clutters are:

\medskip

\begin{figure}[h!] 
\captionsetup{justification=centering}
\centering
\hspace{-1cm}
\begin{minipage}[h]{0.28 \linewidth}
\centering
\includegraphics[height=0.7in]{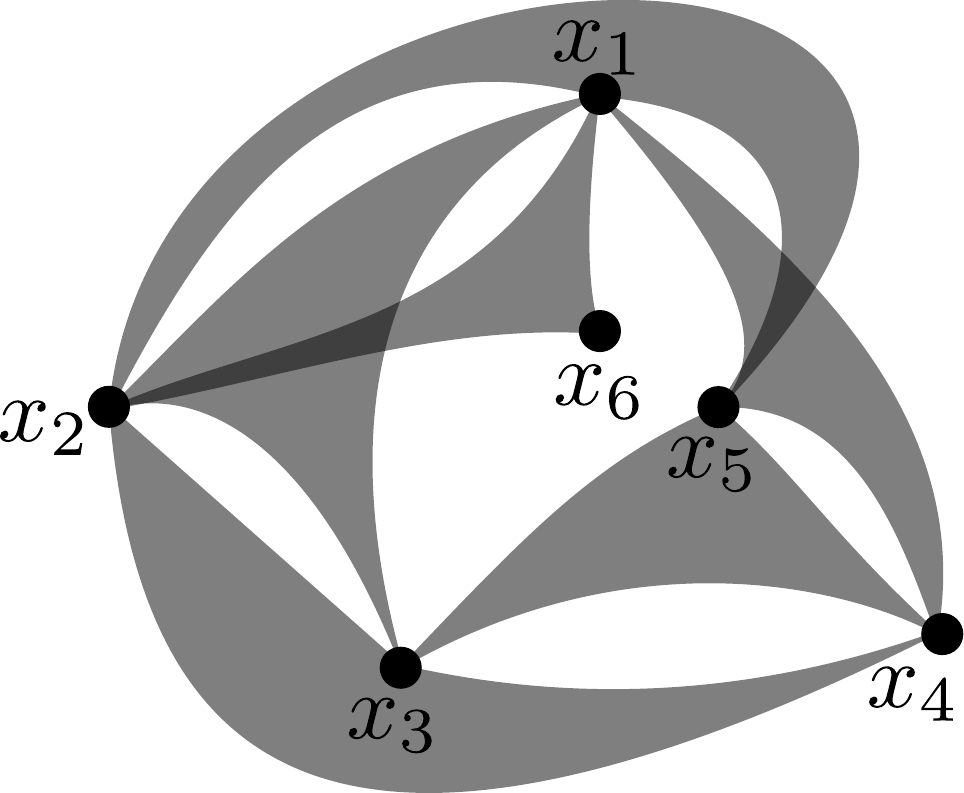}\\
\caption{Clutter $G_9$ \\ $F'_4,x_1x_2x_6$}
\end{minipage}
\hspace{2cm}
\begin{minipage}[h]{0.29 \linewidth}
\centering
\includegraphics[height=0.7in]{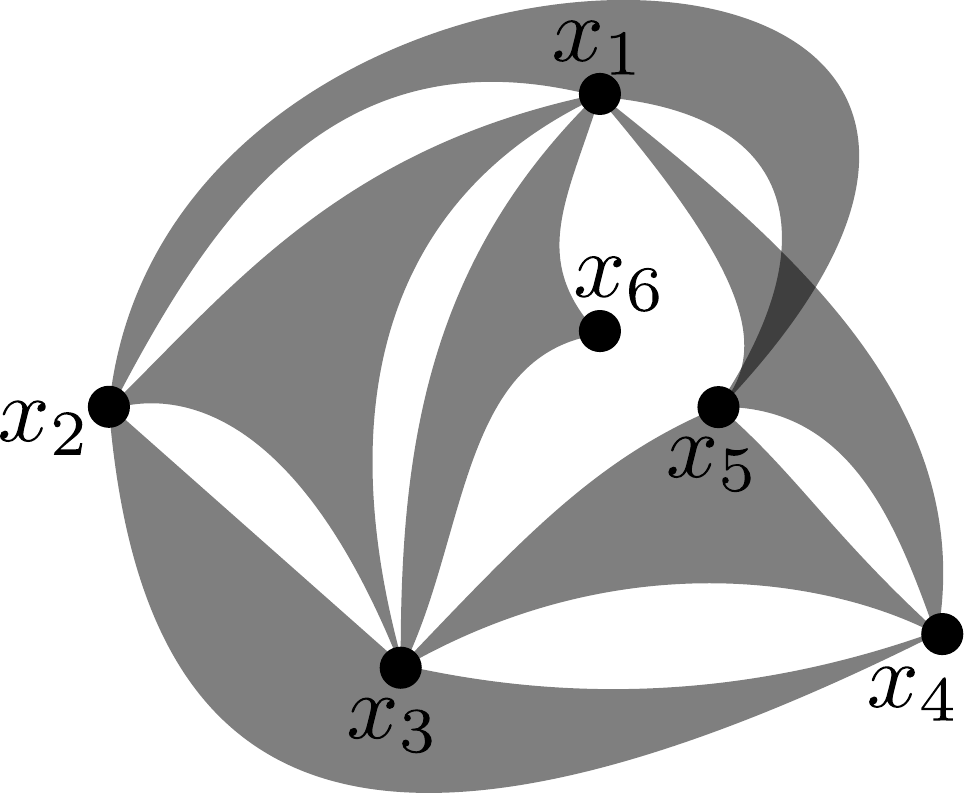}\\
\caption{Clutter $G_{10}$ \\ $F'_4,x_1x_3x_6$}
\end{minipage}
\hspace{-1cm}
\end{figure}

\medskip

Using Lemma \ref{cone} and Lemma \ref{sequencia de grau} it is easy to see that these ten sets represent nonisomorphic monomial Cremona transformations.\\

In fact, $G_1$ can not be equivalent to other by the Lemma \ref{sequencia de grau}.
Notice also that for $i=2,\ldots,10$ the clusters $G_i$ have, up to isomorphism, four distinct types of maximal cones, whose bases can be represented by the following graphs.

\medskip

\begin{figure}[!h]
\captionsetup{justification=centering}
\begin{center}
\begin{minipage}[h]{0.15 \linewidth}

\begin{center}
\hspace{2cm}
\includegraphics[height=.7in]{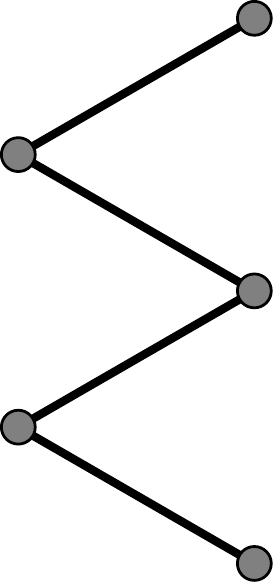}\\
\caption{\\Base $C_1$}
\end{center}
\end{minipage}
\hspace{0.4in}
\begin{minipage}[h]{0.15 \linewidth}
\begin{center}
\includegraphics[height=.7in]{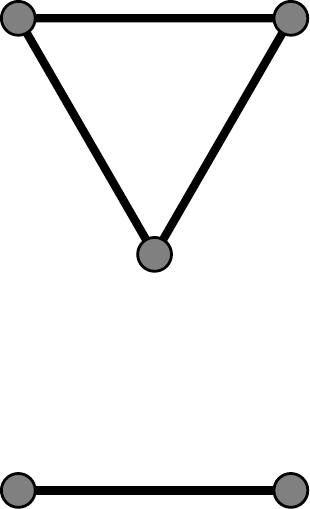}\\
\caption{\\Base $C_2$}
\end{center}
\end{minipage}
\hspace{0.4in}
\begin{minipage}[h]{0.15 \linewidth}
\begin{center}
\includegraphics[height=.7in]{figuras2/fig5.pdf}\\
\caption{\\Base $C_3$}
\end{center}
\end{minipage}
\hspace{0.4in}
\begin{minipage}[h]{0.15 \linewidth}
\begin{center}
\includegraphics[height=.7in]{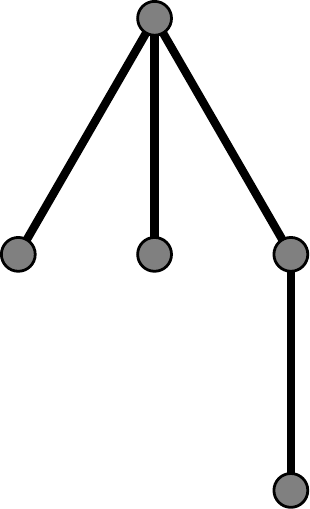}\\
\caption{\\Base $C_4$}
\end{center}
\end{minipage}
\end{center}
\end{figure}
\medskip

The following matrix shows that the ten Cremona sets obtained are nonequivalent, by
having distinct incidence sequence or maximal cones.  
\vspace{0.3cm}
\begin{center}
\begin{tabular}{|c|c|c|c|c|}
  \hline
$(4,4,4,3,2,1)$ & $G_2$ & $G_3$ & $G_5$ & $G_7$    \\
\hline

 MAXIMAL  &  $C_2$ e             &  $C_1$ e              &  $C_1, \, C_3$  & $C_3$ e  \\
CONE         & $C_3 (\times 2)$ & $C_3 (\times 2)$ & e $C_4$            & $C_4 (\times2)$ \\
   \hline
\end{tabular}
\end{center}
\vspace{0.3cm}
 \begin{center}
\begin{tabular}{|c|c|c|c|c|c|}
  \hline
$(4,4,3,3,3,1)$ & $G_4$ & $G_6$ & $G_8$ & $G_9$ & $G_{10}$    \\
   \hline
 MAXIMAL  & $C_3 (\times 2)$  &  $C_3$ e $C_4$   &  $C_1$  e $C_3$  & $C_4 (\times 2)$ & $C_1 (\times 2)$  \\
CONE         &  &  &   &  & \\
\hline
\end{tabular}
\end{center}
\vspace{0.3cm}
\end{proof}

\begin{prop} \label{prop:d3n6t2}
There are $20$ equivalence classes of square free Cremona sets of degree $3$ in $\K[x_1, \ldots, x_6]$
of type $2$.
\end{prop}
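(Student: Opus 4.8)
The plan is to exploit the structure forced by "type 2": by Remark \ref{rmk:3types}, a type-2 log-matrix has a row with five entries equal to $1$, so the associated clutter has a root $v$. By the Plucking Roots Principle, Corollary \ref{cor:PRP}, a set $F$ of type $2$ is a Cremona set if and only if the set $\tilde{F}$ obtained by plucking the root is a Cremona set of $n-1 = 5$ square-free monomials of degree $d-1 = 2$ in $\K[x_1,\ldots,x_5]$, satisfying the canonical restrictions. The degree-$2$ Cremona sets in five variables are classified by Theorem \ref{prop:degreetwo}: their graphs are trees with one loop, or forests with a unique odd cycle, and one checks (cf. the proof of Theorem \ref{cor:p4}, using the fact that $F$ must satisfy the canonical restrictions after deletion of leaves) that there is a short explicit list of such graphs on $5$ vertices and $5$ edges. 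So every type-2 Cremona set in six variables arises as a "cone" $F = \{x_6 g_1,\ldots,x_6 g_5, f_6\}$, where $\{g_1,\ldots,g_5\}$ is one of these degree-$2$ Cremona sets and $f_6 \in \K[x_1,\ldots,x_5]$ is a square-free cubic monomial; the whole problem is to enumerate, for each base $\tilde{F}$, the orbits of admissible choices of $f_6$.

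The key steps, in order, would be: (1) List the equivalence classes of degree-$2$ square-free Cremona sets in $\K[x_1,\ldots,x_5]$ arising as $\tilde{F}$ — equivalently, the admissible base graphs on five vertices — together with the stabilizer in $S_5$ of each (this is the content of Theorem \ref{cor:p4} recast via duality, since such degree-$2$ sets in five variables are dual to degree-$3$ ones). (2) For each such base $\tilde{F}$, the sixth monomial $f_6$ must be chosen so that $F$ satisfies the canonical restrictions: condition (1) of Definition \ref{defin:restricoescanonocas} forces nothing new (each $x_i$, $i\le 5$, already divides some $g_k$, and $x_6$ divides $g_1x_6$), while condition (2) forces $x_6 \nmid f_6$, i.e. $f_6 \in \K[x_1,\ldots,x_5]$, and forces that no $x_i$ with $i\le 5$ has incidence degree $6$ — but since the base graph already has each $x_i$ of incidence degree at most $2\le 4$, adding $f_6$ keeps incidence degrees bounded by $3<5$, so the only genuine restriction is $x_6\nmid f_6$, plus the requirement that $F$ not be type $1$ or $3$ — which is automatic since it has a root. (3) Enumerate the $\binom{5}{3}=10$ square-free cubic monomials $f_6$ in $x_1,\ldots,x_5$ modulo the stabilizer $G_{\tilde{F}}\subset S_5$ of the base, using Lemma \ref{isomorfas}: two choices $f_6, f_6'$ give equivalent $F$ when $f_6' = \gamma * f_6$ for $\gamma \in G_{\tilde{F}}$. (4) Among the representatives obtained, remove remaining coincidences using Lemma \ref{cone} (comparing the multiset of maximal cones) and Lemma \ref{sequencia de grau} (comparing incidence sequences), exactly as in the proof of Proposition \ref{prop:d3n6t1}; then count, arriving at $20$.

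The main obstacle I expect is step (4): after quotienting each base by its own stabilizer one still obtains a redundant list, because two $F$'s built on \emph{different} bases $\tilde{F}_i, \tilde{F}_j$ can turn out equivalent (the root $x_6$ of $F$ need not be canonical — $F$ may have several roots, hence several presentations as a cone), so one cannot simply sum the per-base orbit counts. Handling this requires, for each candidate $F$, identifying \emph{all} roots of its clutter and checking whether plucking different roots lands in the same base-orbit; the bookkeeping device for this is precisely Lemma \ref{cone}, which guarantees that equivalent $F$'s have matching multisets of maximal cones, so the invariants "incidence sequence" together with "list of bases of maximal cones" suffice to separate the genuinely distinct classes. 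I would organize the final count in a table, as in Proposition \ref{prop:d3n6t1}, displaying for each of the $20$ classes its incidence sequence and its maximal-cone data, thereby simultaneously exhibiting representatives and proving pairwise non-equivalence. A secondary, more routine obstacle is simply the volume of the case analysis — five or six bases, up to ten monomials each — but Lemma \ref{mdc} and the canonical restrictions prune many choices immediately (e.g. a base vertex of incidence degree $2$ forces $f_6$ to avoid creating a second root-like configuration incompatible with $\det A_F = \pm 3$).
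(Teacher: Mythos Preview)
Your approach is essentially that of the paper: reduce via PRP to the four degree-$2$ Cremona bases in $\K[x_2,\ldots,x_6]$ (there are exactly four, the graphs of Theorem \ref{cor:p4}, not ``five or six''), enumerate the added cubic modulo each base's stabilizer via Lemma \ref{isomorfas}, and separate the resulting $5+6+7+2=20$ clutters by incidence sequence and maximal-cone data. Two small corrections: base-graph vertices can have incidence degree up to $4$, not $2$ (your conclusion about the canonical restrictions is still fine, since $4+1\le 5$), and the cross-base redundancies you anticipate in step (4) turn out not to occur --- the twenty candidates are already pairwise inequivalent, and Lemma \ref{mdc} is not needed for type $2$.
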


\begin{proof} Let $F $ be such a set, that is, $F=\{x_1F',m\}$, where $F' \subset \K[x_2, \ldots, x_6]$ is a set of five square-free monomials of degree $2$ 
and $m \in \K[x_2, \ldots, x_6]$ is a degree $3$ square free monomial. By PRP, Corollary \ref{cor:PRP}, $F'$ is a Cremona set. 
There are four equivalence classes of such Cremona sets (c.f. \cite{SV2}) and also Corollary \ref{cor:p4}. The associated graphs can be represented as:
\begin{figure}[h!] 
\captionsetup{justification=centering}
\centering
\hspace{-0.3cm}
\begin{minipage}[h]{0.26 \linewidth}
\centering
\includegraphics[height=0.7in]{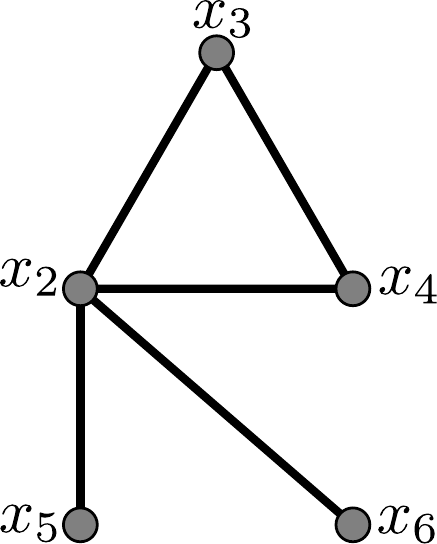}\\
\caption{\\ Graph $F'_1$}
\end{minipage}
\hspace{-0.5cm}
\begin{minipage}[h]{0.26 \linewidth}
\centering
\includegraphics[height=0.7in]{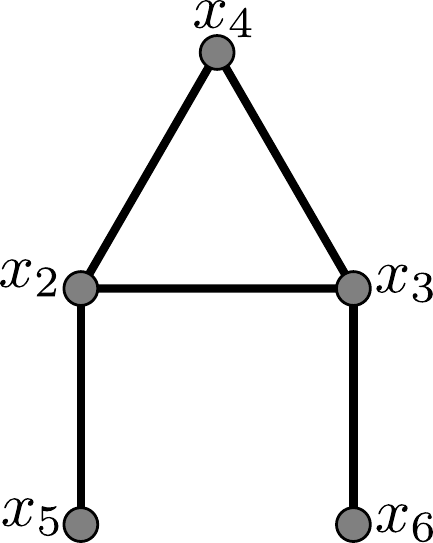}\\
\caption{\\ Graph $F'_2$}
\end{minipage}
\hspace{-0.4cm}
\begin{minipage}[h]{0.26\linewidth}
\centering
\includegraphics[height=.7in]{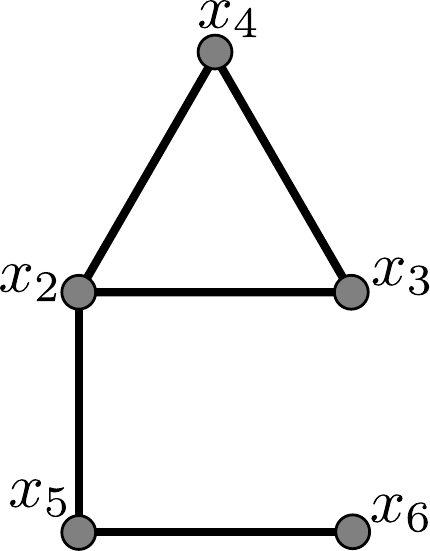}\\
\caption{\\ Graph $F'_3$}
\end{minipage}
\hspace{-0.2cm}
\begin{minipage}[h]{0.26 \linewidth}
\centering
\includegraphics[height=0.7in]{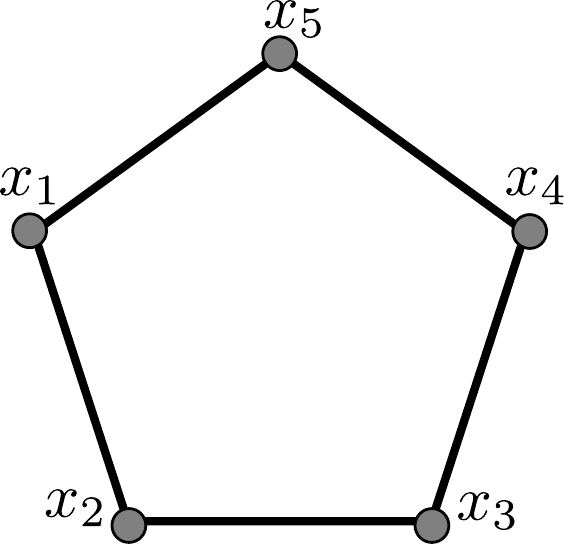}\\
\caption{\\ Graph $F'_4$}
\end{minipage}
\end{figure}


So, $F$ is of the form $\{x_1F'_i,m_i\}$, where $i=1, 2, 3, 4$ and $m \in \K[x_2, \ldots, x_6]$
is a cubic monomial. Let us analyze the possibilities for $m_i$ according to the permutations that stabilize
$x_1F_i$, see Lemma \ref{isomorfas}, in order to exclude transformations in the same orbit.
The symmetries of the graph associated with $F_i$ are helpful.

The stabilizer of $F'_1$ is generated by $\beta=(3,4)$ and $\gamma=(5,6)$.
By the Lemma \ref{isomorfas} we have
$$\mathcal{O}_{\{x_1F'_1,x_2x_3x_5\}}=\mathcal{O}_{\{x_1F'_1,\beta*(x_2x_3x_5)\}}=\mathcal{O}_{\{x_1F'_1,\gamma*(x_2x_3x_5)\}}=\mathcal{O}_{\{x_1F'_1,\beta \gamma*(x_2x_3x_5)\}},$$
$$\mathcal{O}_{\{x_1F'_1,x_3x_4x_5\}}=\mathcal{O}_{\{x_1F'_1,\gamma*(x_3x_4x_5)\}}\ \mbox{ and} \ 
\mathcal{O}_{\{x_1F'_1,x_3x_5x_6\}}=\mathcal{O}_{\{x_1F'_1,\beta*(x_3x_5x_6)\}}. $$
So the possibilities for $m_1$ are $x_2x_3x_4$, $x_2x_3x_5$, $x_2x_5x_6$, $x_3x_4x_5$ and $x_3x_5x_6$. The associated clutters are the following ones:

\medskip

\begin{figure}[h!]
\captionsetup{justification=centering} 
\centering
\hspace{-0.3cm}
\begin{minipage}[h]{0.33 \linewidth}
\centering
\includegraphics[height=.7in]{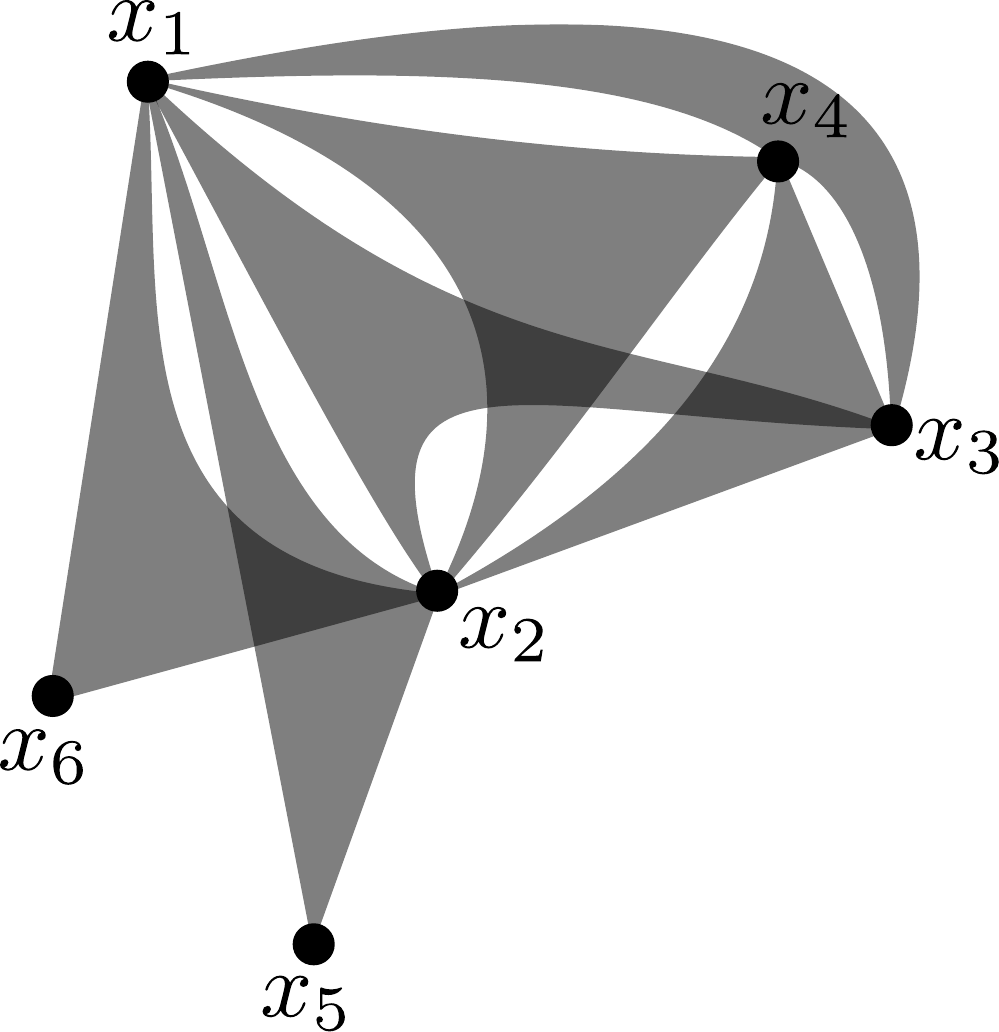}
\caption{Clutter $F_1$ \\ $x_1F'_1,x_2x_3x_4$}
\end{minipage}
\hspace{-0.5cm}
\begin{minipage}[h]{0.33 \linewidth}
\centering
\includegraphics[height=.7in]{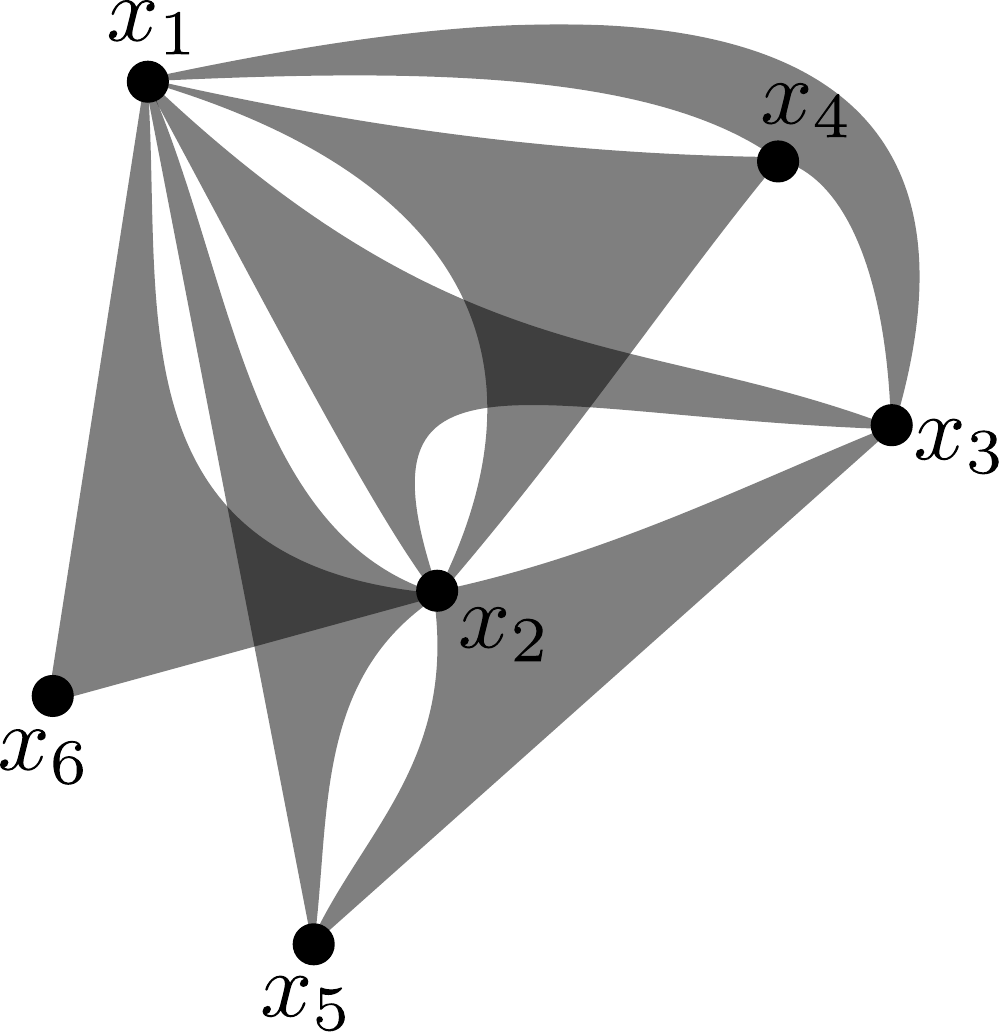}
\caption{Clutter $F_2$ \\ $x_1F'_1,x_2x_3x_5$}
\end{minipage}
\hspace{-0.5cm}
\begin{minipage}[h]{0.33 \linewidth}
\centering
\includegraphics[height=.7in]{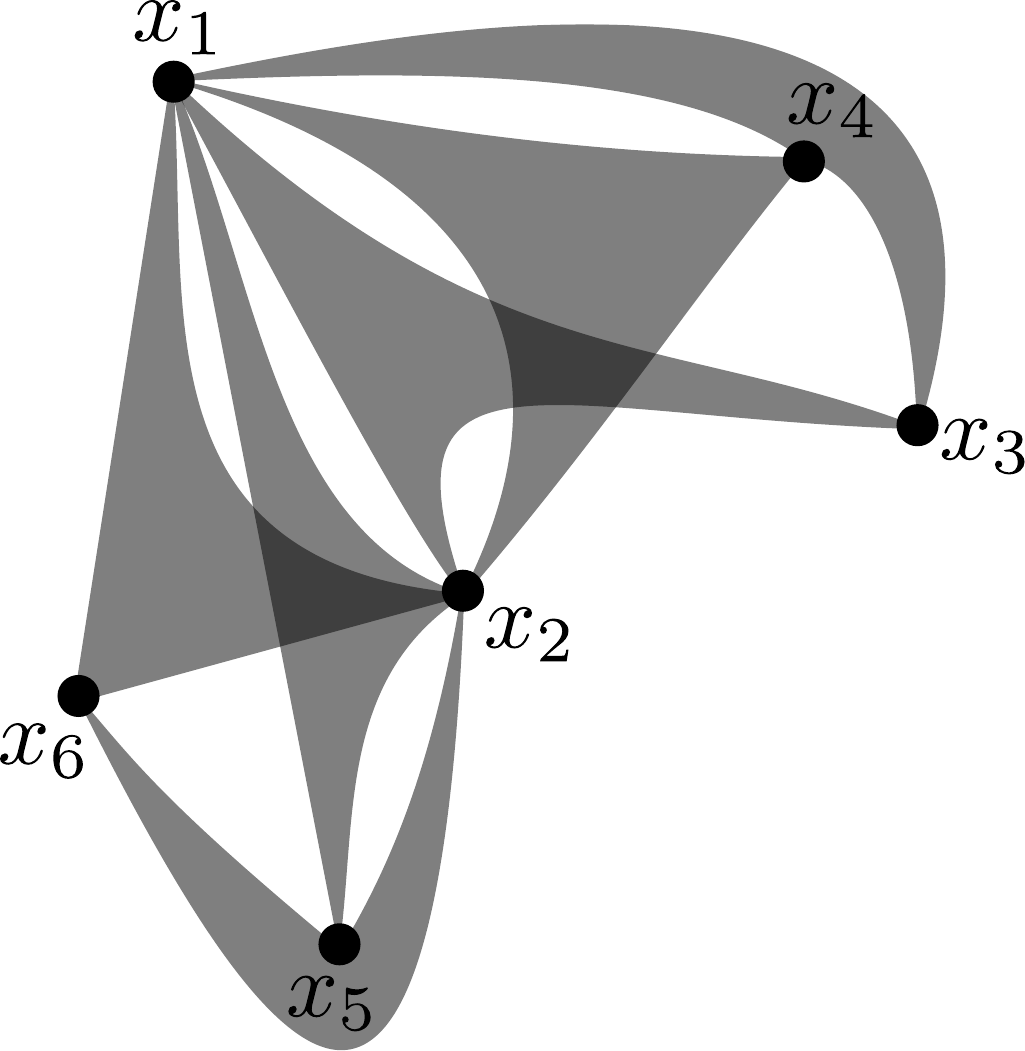}
\caption{Clutter $F_3$ \\ $x_1F'_1,x_2x_5x_6$}
\end{minipage}
\end{figure}


\begin{figure}[h!] 
\captionsetup{justification=centering}
\centering
\hspace{-0.5cm}
\begin{minipage}[h]{0.35 \linewidth}
\centering
\includegraphics[height=.7in]{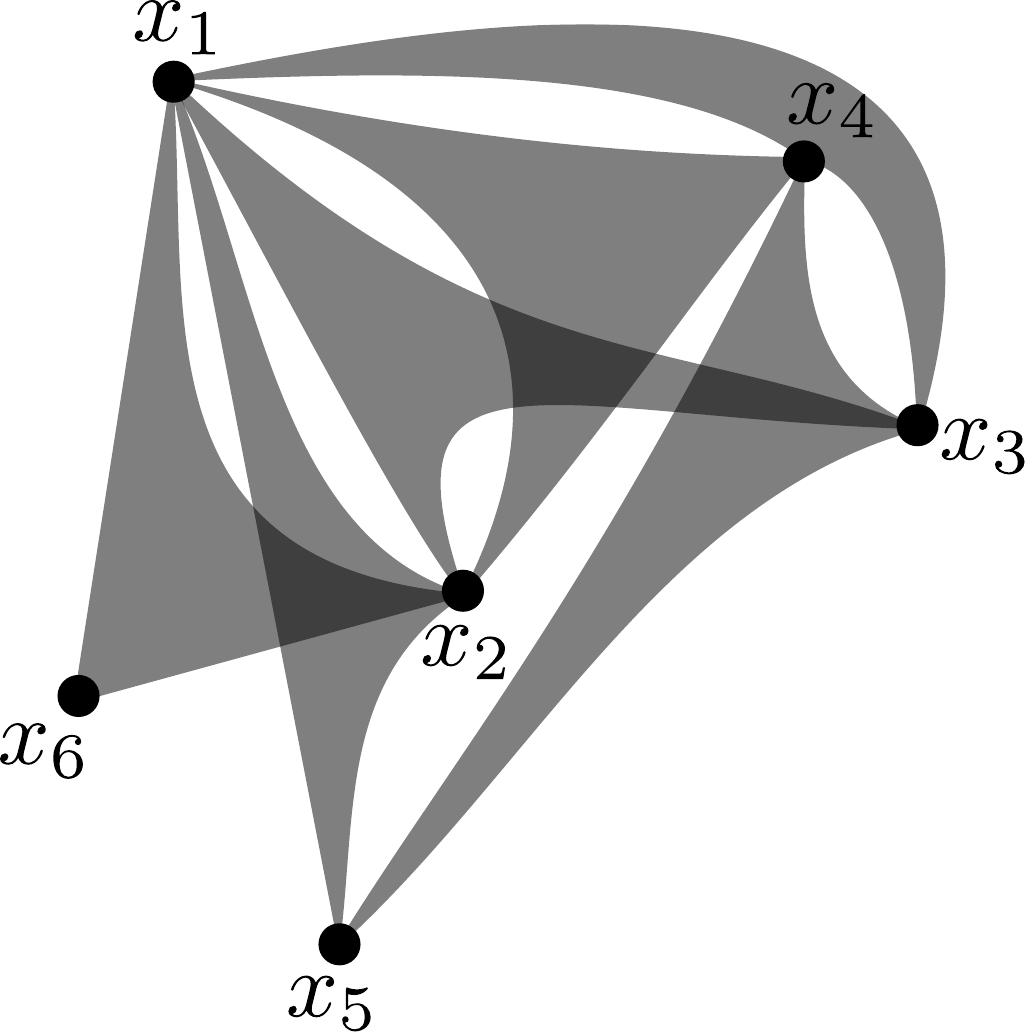}\\
\caption{Clutter $F_4$ \\ $x_1F'_1,x_3x_4x_5$}
\end{minipage}
\hspace{2cm}
\begin{minipage}[h]{0.35 \linewidth}
\centering
\includegraphics[height=.7in]{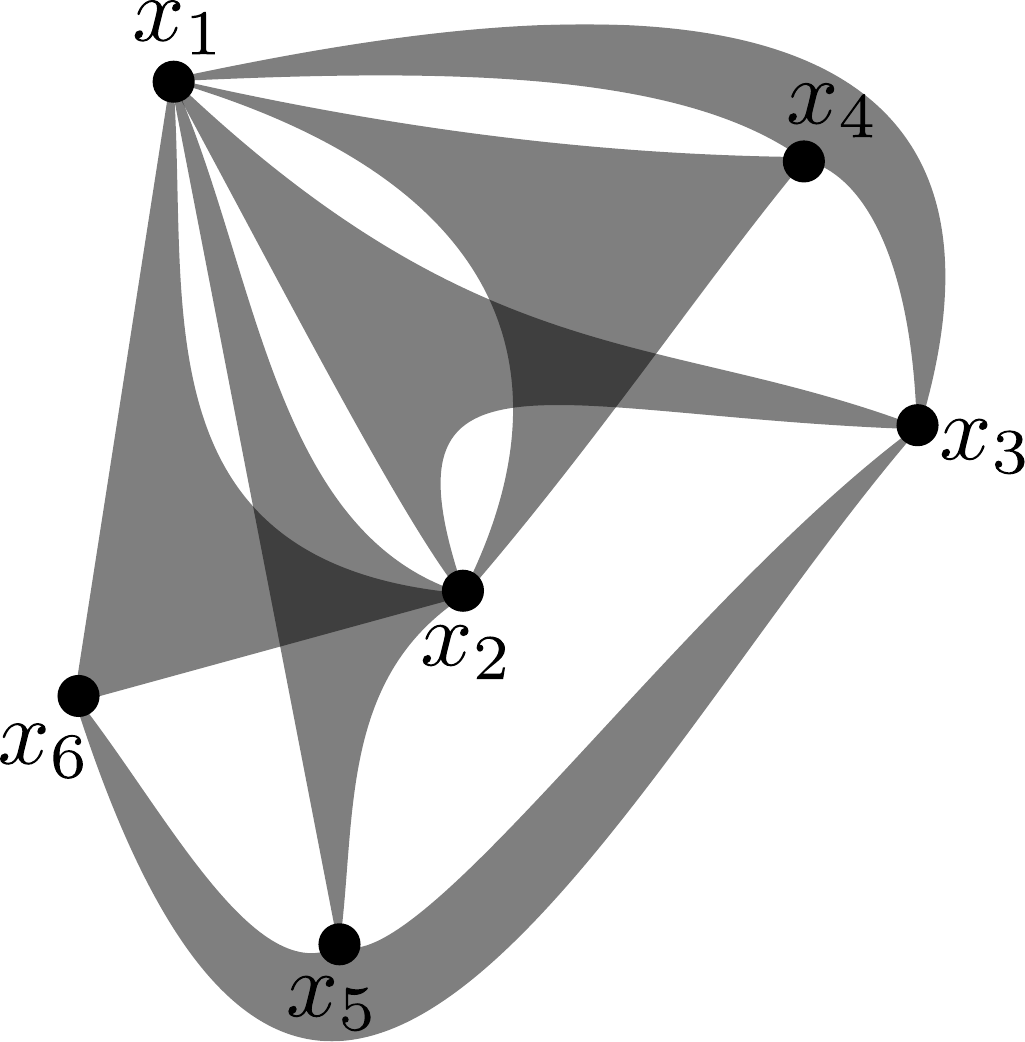}\\
\caption{Clutter $F_5$ \\ $x_1F'_1,x_3x_5x_6$}
\end{minipage}
\end{figure} 


\medskip
The stabilizer of $F'_2$ is generated by $\beta=(2,3)(5,6)$. By the Lemma \ref{isomorfas} we have
$$\mathcal{O}_{\{x_1F'_2,x_2x_3x_5\}}=\mathcal{O}_{\{x_1F'_2,\beta*(x_2x_3x_5)\}}, \, \mathcal{O}_{\{x_1F'_2,x_2x_4x_5\}}=\mathcal{O}_{\{x_1F'_2,\gamma*(x_2x_4x_5)\}},$$
$$\mathcal{O}_{\{x_1F'_2,x_2x_4x_6\}}=\mathcal{O}_{\{x_1F'_2,\beta*(x_2x_4x_6)\}} \mbox{ and } \,
\mathcal{O}_{\{x_1F'_2,x_2x_5x_6\}}=\mathcal{O}_{\{x_1F'_2,\beta*(x_2x_5x_6)\}}. $$
The possibilities for $m_2$ are $x_2x_3x_4$, $x_2x_3x_5$, $x_2x_4x_5$, $x_2x_4x_6$, $x_2x_5x_6$ and $x_4x_5x_6$. The clutters associated to each of them are:

\medskip

\begin{figure}[!htbp]
\captionsetup{justification=centering}
\centering
\hspace{-.5cm}
\begin{minipage}[h]{0.33 \linewidth}
\centering
\includegraphics[height=.7in]{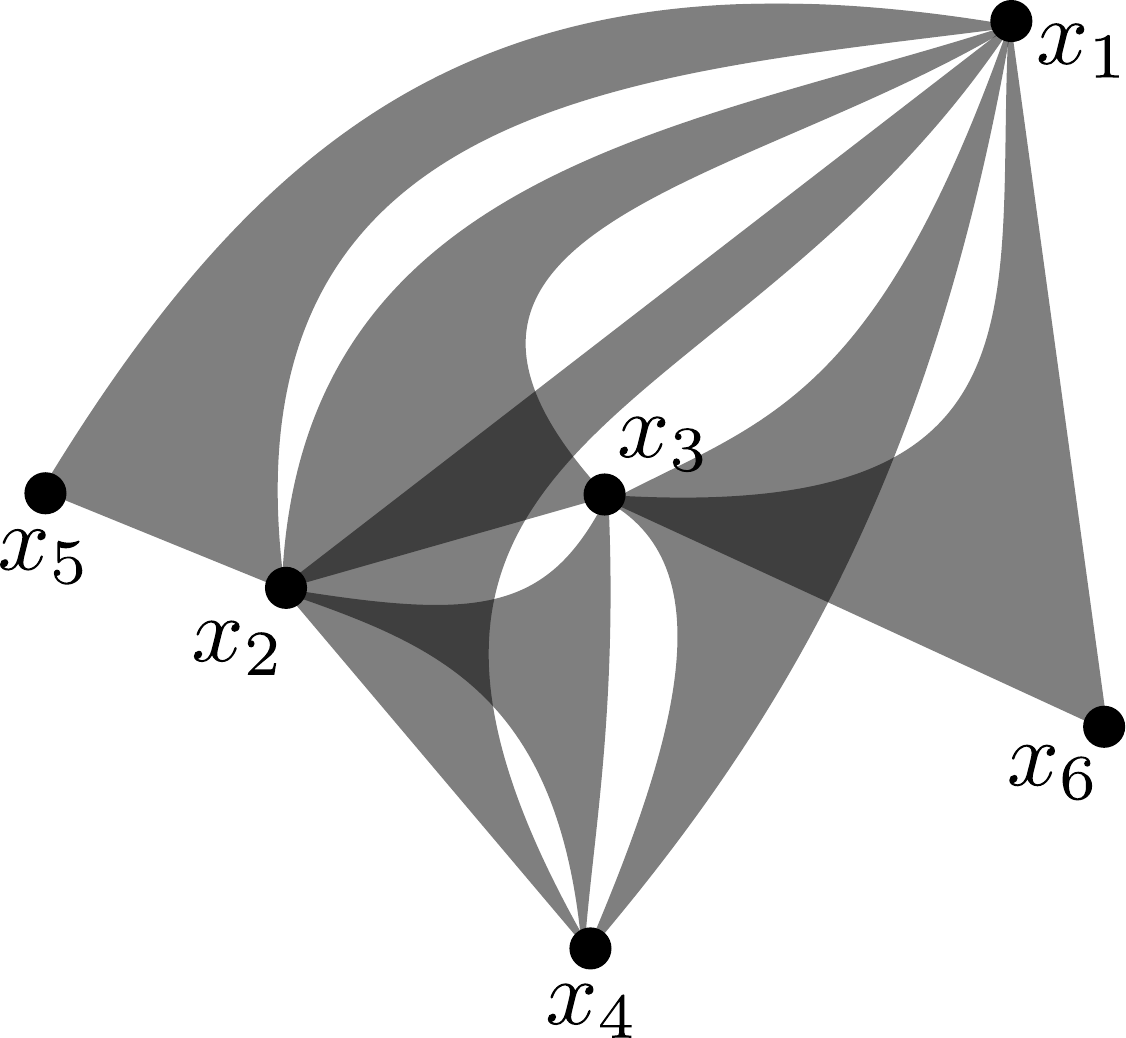}\\
\caption{Clutter $F_6$ \\ $x_1F'_2,x_2x_3x_4$}
\end{minipage}
\hspace{-0.5cm}
\begin{minipage}[h]{0.33 \linewidth}
\centering
\includegraphics[height=.7in]{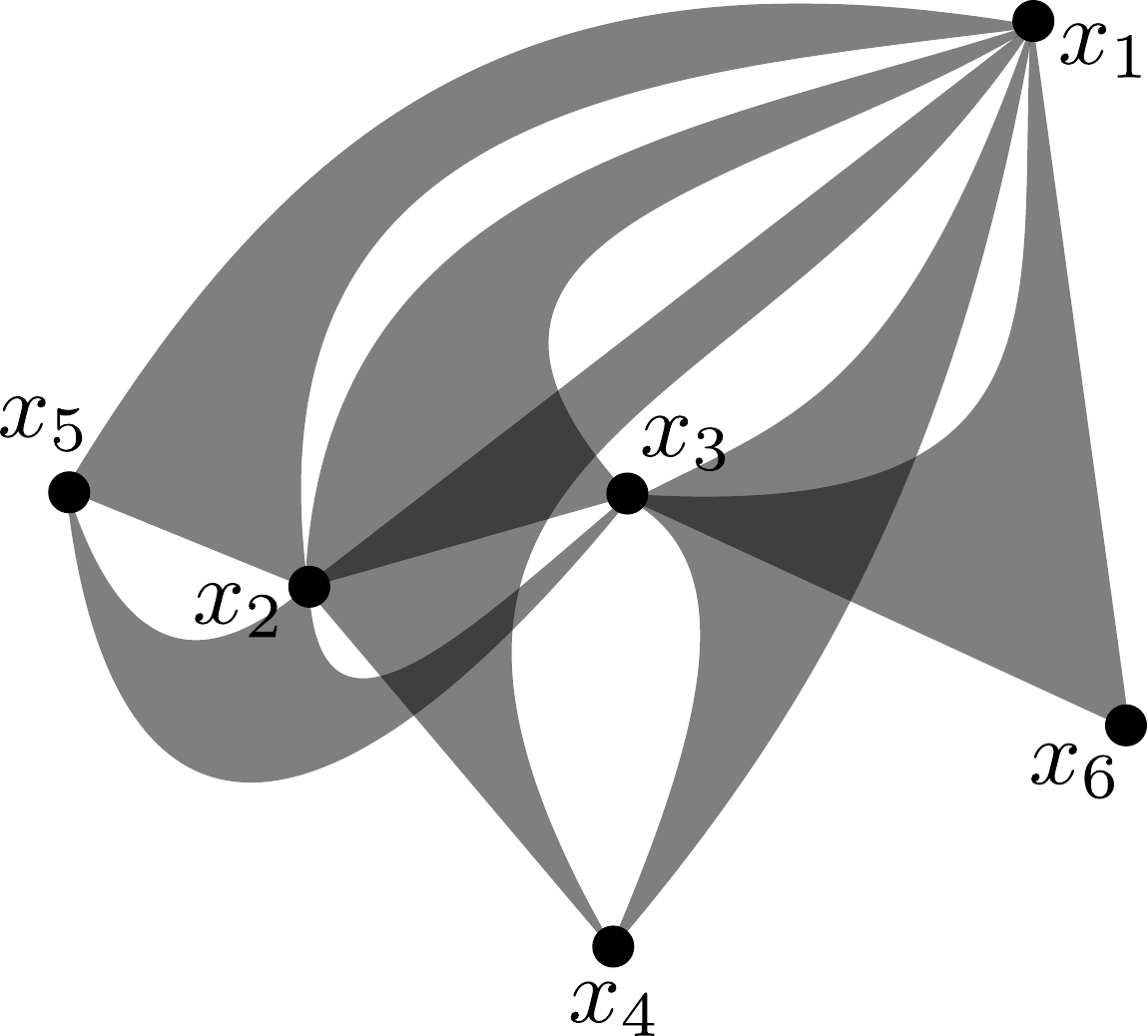}\\
\caption{Clutter $F_7$ \\ $x_1F'_2,x_2x_3x_5$}
\end{minipage}
\hspace{-0.5cm}
\begin{minipage}[h]{0.33 \linewidth}
\centering
\includegraphics[height=.7in]{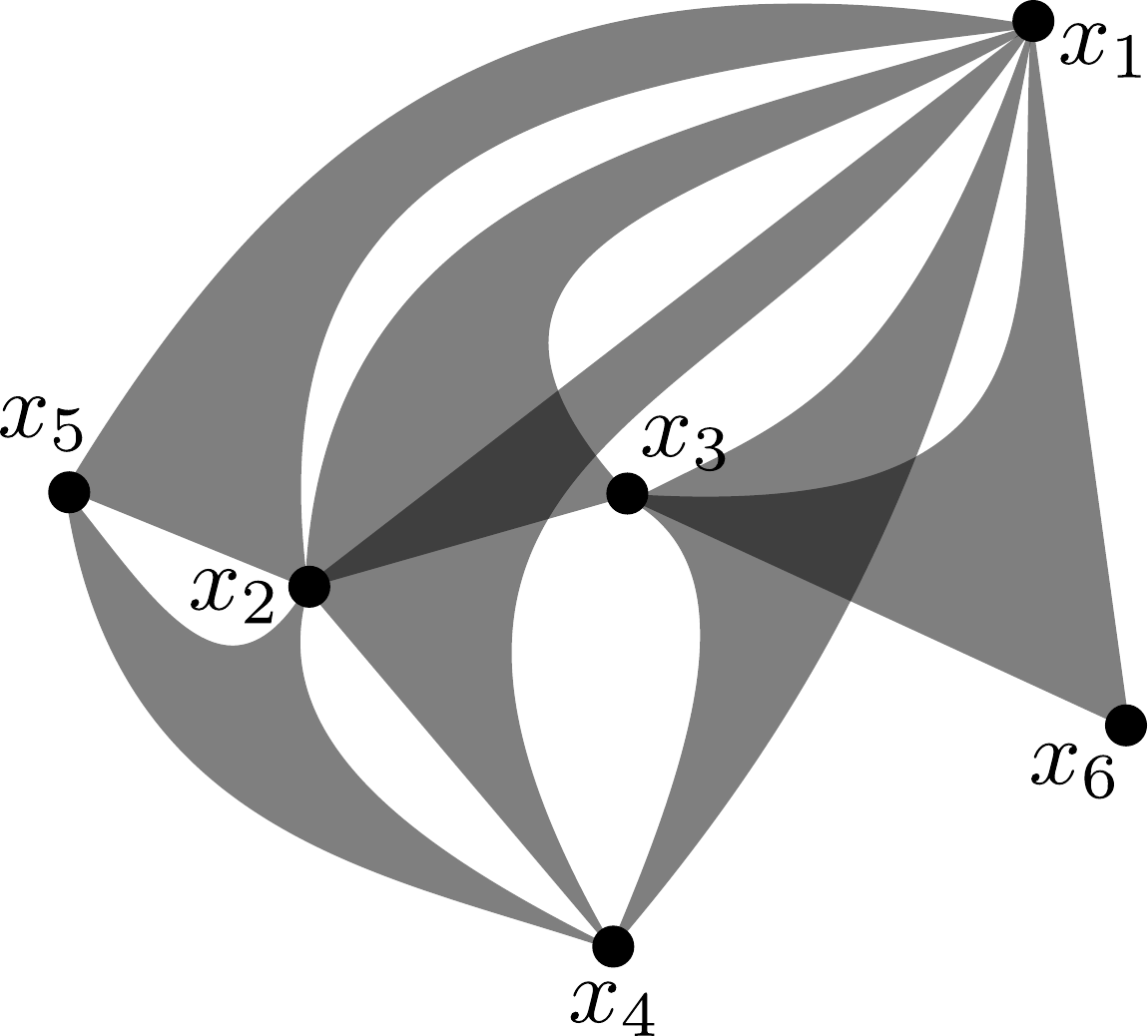}\\
\caption{Clutter $F_8$ \\ $x_1F'_2,x_2x_4x_5$}
\end{minipage}
\end{figure}

%

\medskip

\begin{figure}[h!] 

\captionsetup{justification=centering}
\centering
\hspace{-0.5cm}
\begin{minipage}[h]{0.35 \linewidth}
\centering
\includegraphics[height=.7in]{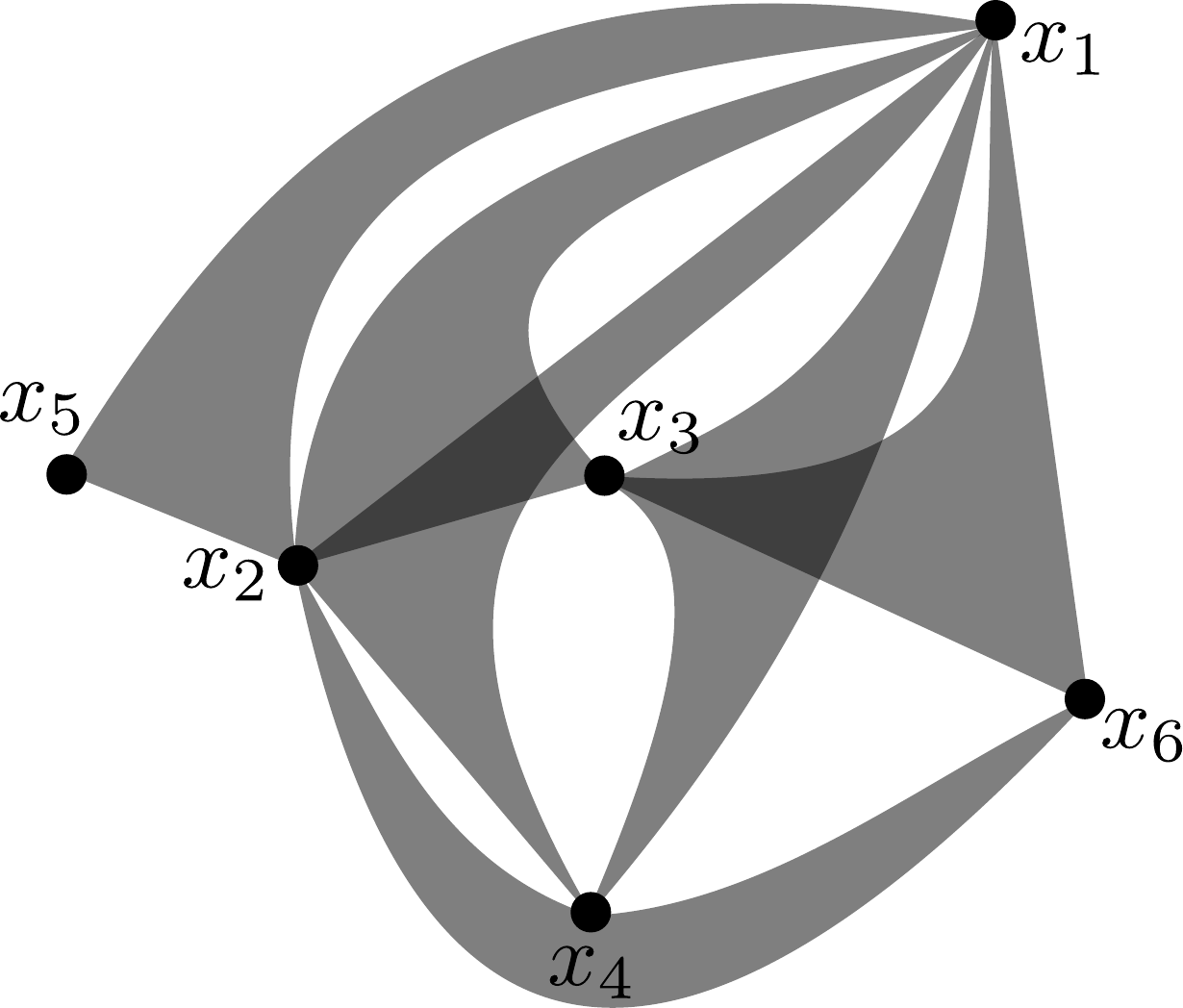}\\
\caption{Clutter $F_9$ \\ $x_1F'_2,x_2x_4x_6$}
\end{minipage}
\hspace{-0.5cm}
\begin{minipage}[h]{0.35 \linewidth}
\centering
\includegraphics[height=.7in]{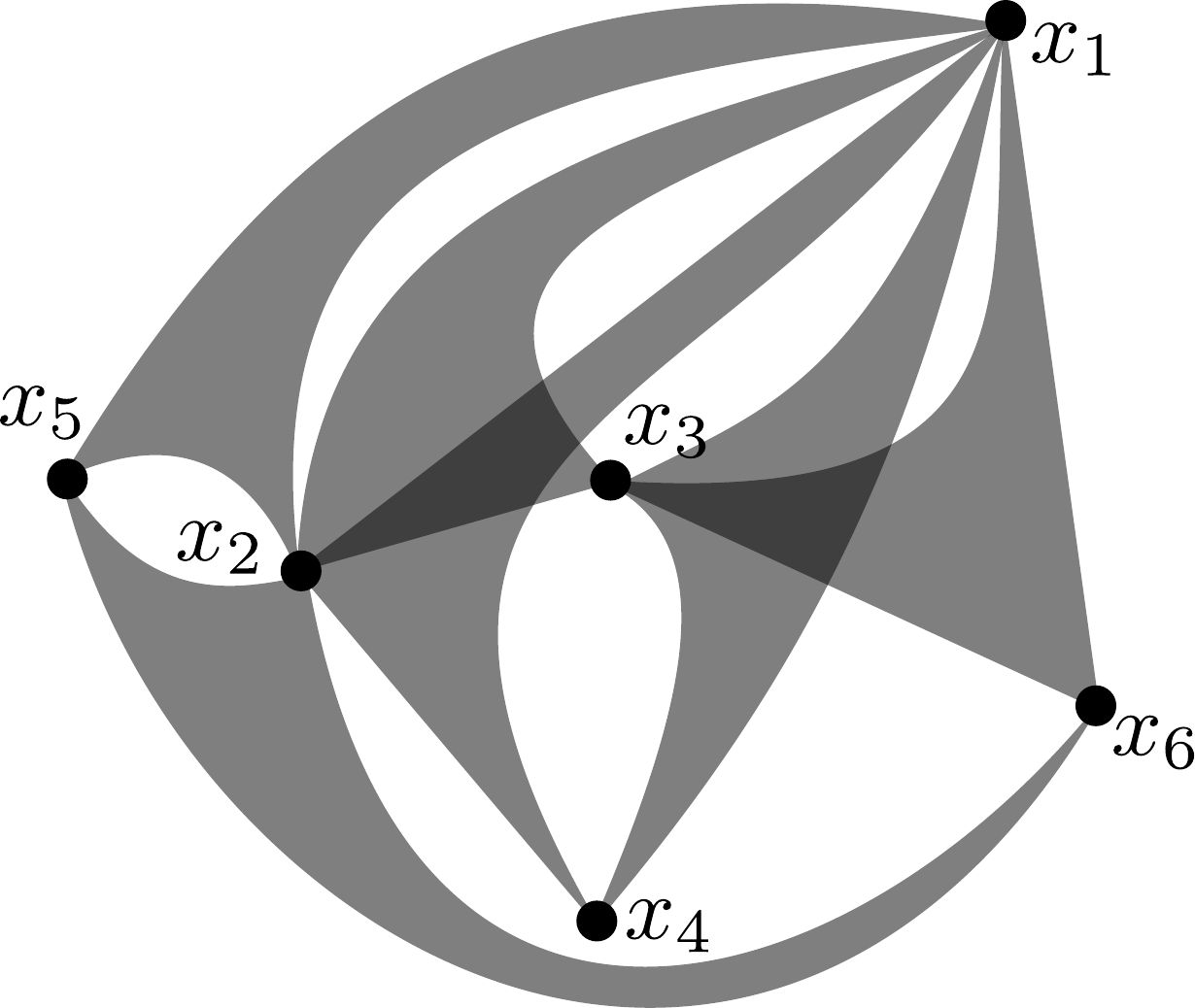}\\
\caption{Clutter $F_{10}$ \\ $x_1F'_2,x_2x_5x_6$}
\end{minipage}
\hspace{-.5cm}
\begin{minipage}[h]{0.35 \linewidth}
\centering
\includegraphics[height=.7in]{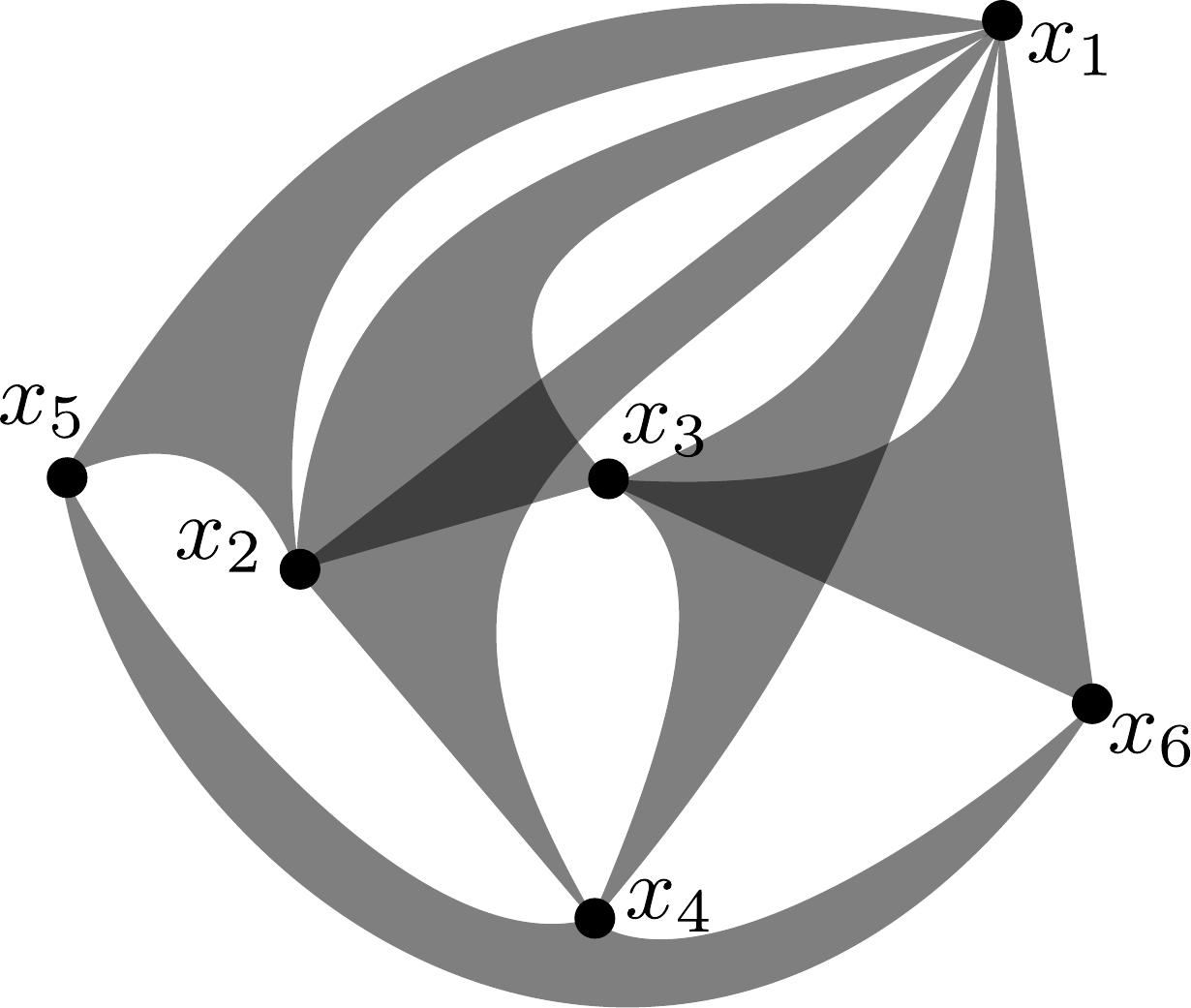}\\
\caption{Clutter $F_{11}$ \\ $x_1F'_2,x_4x_5x_6$}
\end{minipage}
\end{figure}


\medskip

The stabilizer of $F'_3$ is generated by $\beta=(3,4)$. By the Lemma \ref{isomorfas} we have
$$\mathcal{O}_{\{x_1F'_3,x_2x_3x_5\}}=\mathcal{O}_{\{x_1F'_3,\beta*(x_2x_3x_5)\}}, \, \mathcal{O}_{\{x_1F'_3,x_2x_3x_6\}}=\mathcal{O}_{\{x_1F'_3,\gamma*(x_2x_3x_6)\}} \mbox{and}$$
$$\mathcal{O}_{\{x_1F'_3,x_3x_5x_6\}}=\mathcal{O}_{\{x_1F'_3,\beta*(x_3x_5x_6)\}}. $$
The possibilities for $m_3$ are $x_2x_3x_4$, $x_2x_3x_5$, $x_2x_3x_6$, $x_2x_5x_6$, $x_3x_4x_5$, $x_3x_4x_6$ and $x_3x_5x_6$.

\begin{figure}[h!] 
\centering
\hspace{-0.3cm}
\begin{minipage}[h]{0.26 \linewidth}
\centering
\includegraphics[height=.7in]{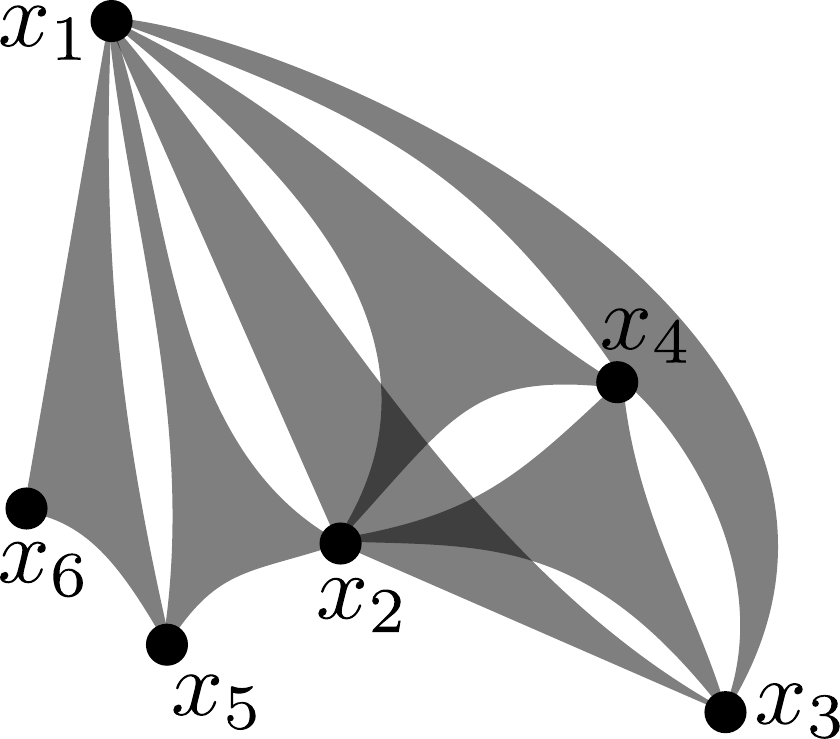}\\
\caption{\\ Clutter  $F'_1$}
\end{minipage}
\hspace{-0.5cm}
\begin{minipage}[h]{0.26 \linewidth}
\centering
\includegraphics[height=.7in]{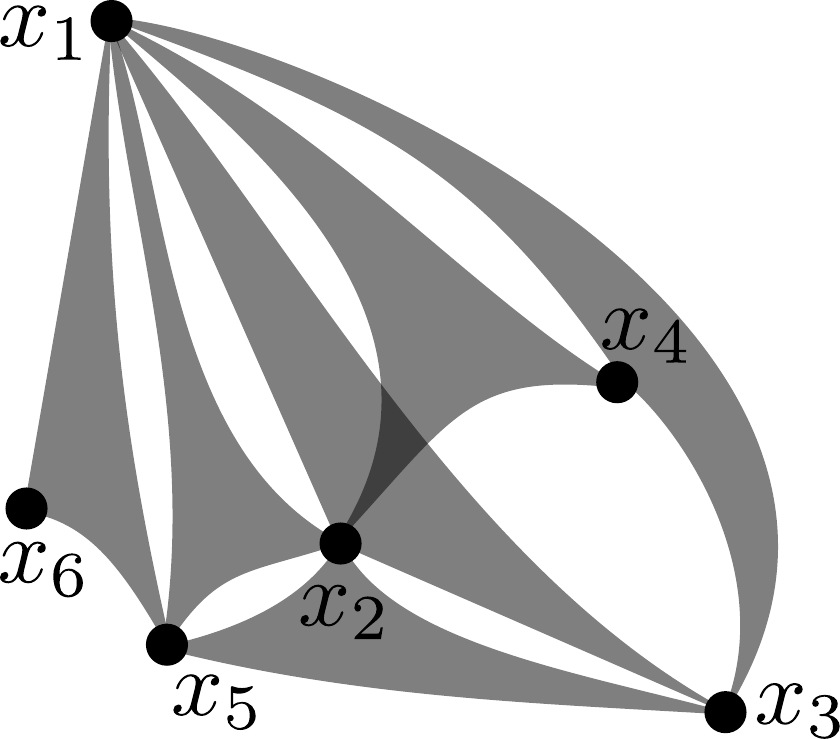}\\
\caption{\\ Clutter  $F'_2$}
\end{minipage}
\hspace{-0.4cm}
\begin{minipage}[h]{0.26\linewidth}
\centering
\includegraphics[height=.7in]{figuras2/tipo1clutter3_3m34.pdf}
\caption{\\ Clutter  $F'_3$}
\end{minipage}
\hspace{-0.5cm}
\begin{minipage}[h]{0.26 \linewidth}
\centering
\includegraphics[height=.7in]{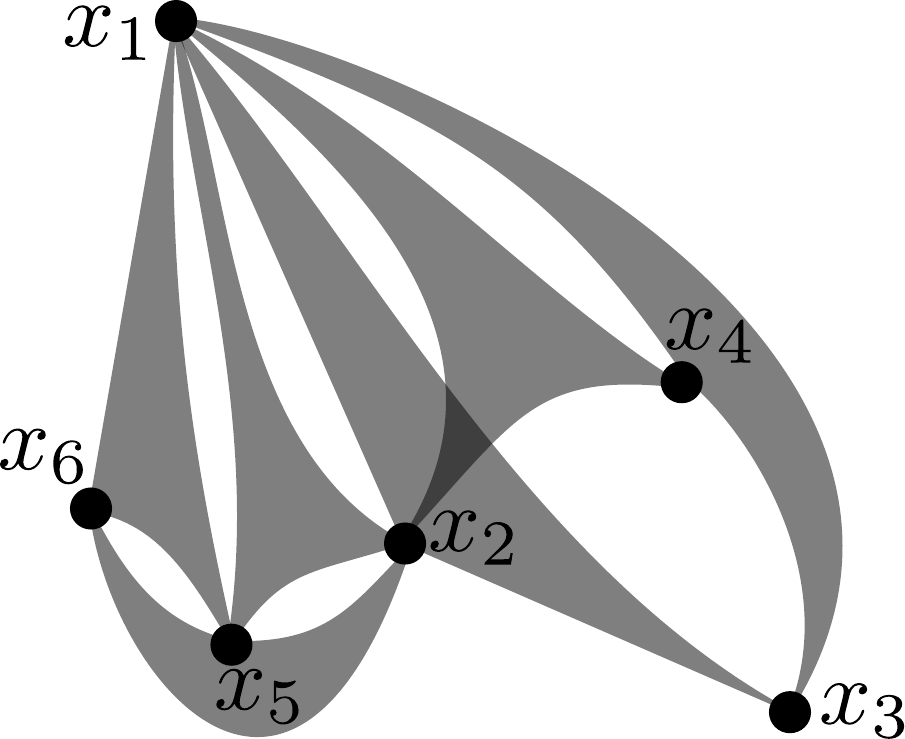}
\caption{\\ Clutter $F'_4$}
\end{minipage}
\end{figure}

\vspace{-.4in}

\medskip

\begin{figure}[h!]
\captionsetup{justification=centering} 
\centering
\hspace{-.3cm}
\begin{minipage}[h]{0.35 \linewidth}
\centering
\includegraphics[height=.7in]{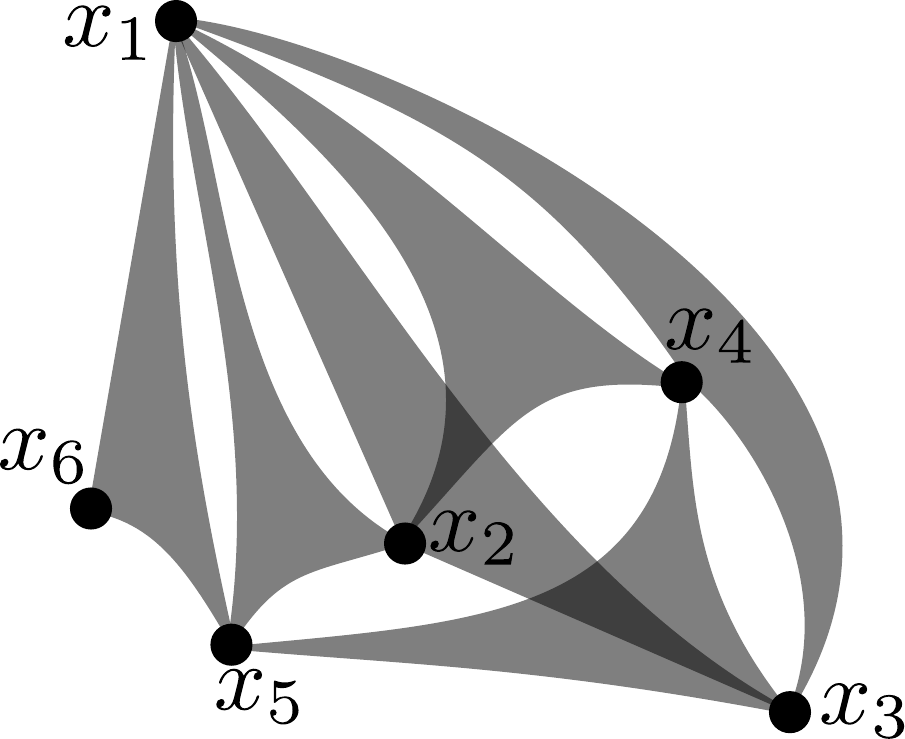}\\
\caption{Clutter $F_{16}$ \\ $x_1F'_3,x_3x_4x_5$}
\end{minipage}
\hspace{-.5cm}
\begin{minipage}[h]{0.35 \linewidth}
\centering
\includegraphics[height=.7in]{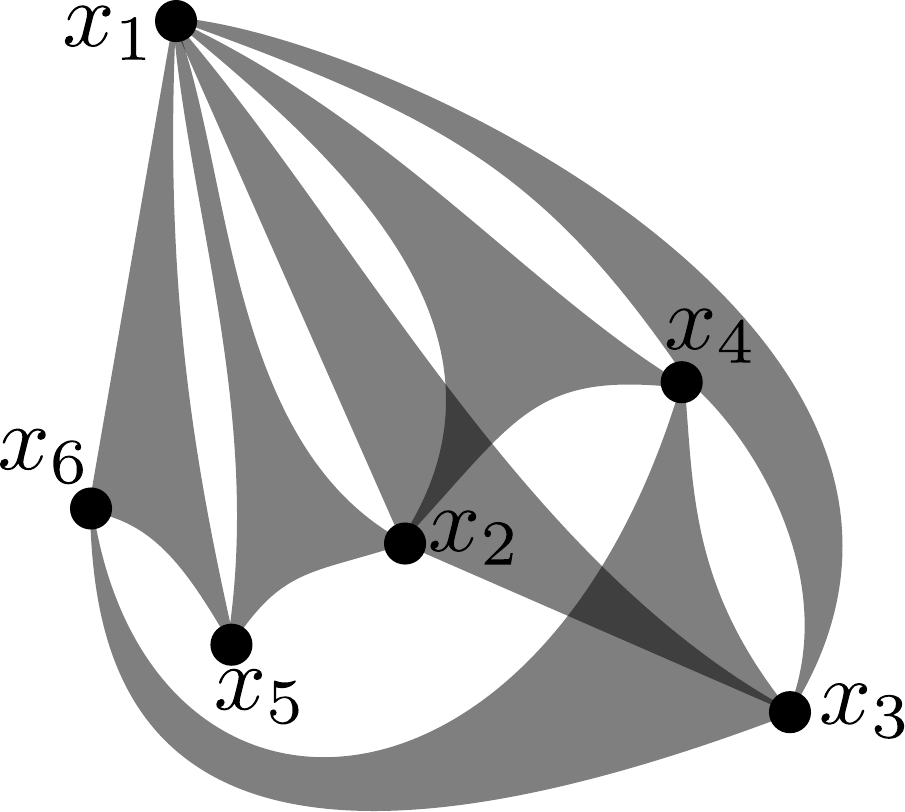}\\
\caption{Clutter $F_{17}$ \\ $x_1F'_3,x_3x_4x_6$}
\end{minipage}
\hspace{-.5cm}
\begin{minipage}[h]{0.35 \linewidth}
\centering
\includegraphics[height=.7in]{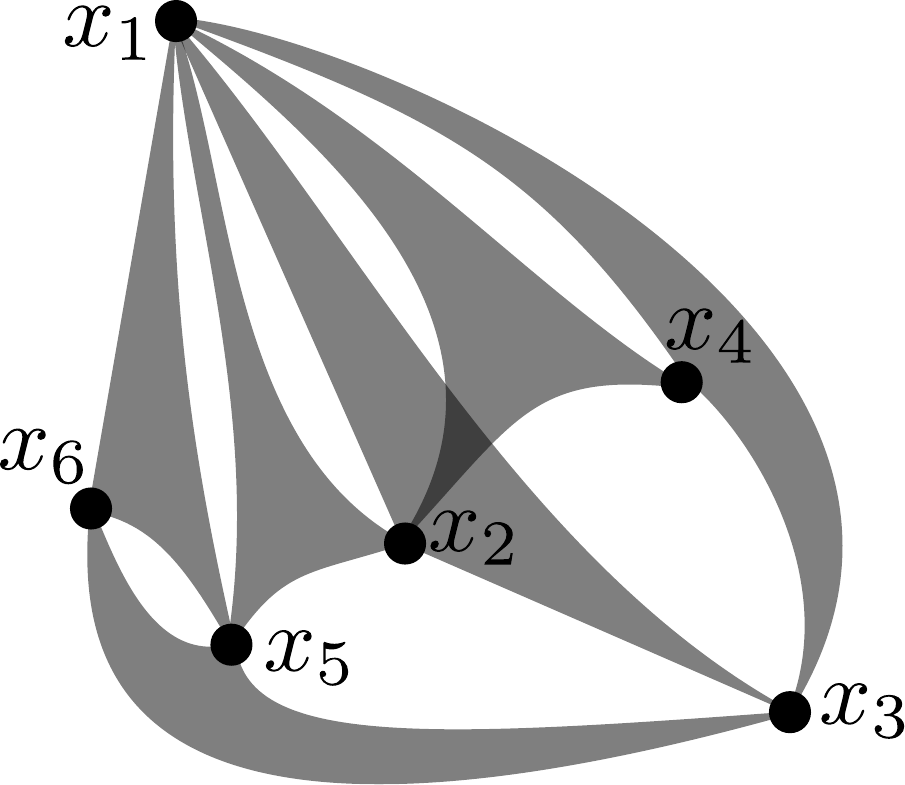}\\
\caption{Clutter $F_{18}$ \\ $x_1F'_3,x_3x_5x_6$}
\end{minipage}
\end{figure} 


\medskip
\vspace{-.5cm}
\newpage
The stabilizer for $F'_4$ is generated by $\beta=(2,3,4,5,6)$.
By the Lemma \ref{isomorfas} we have
\begin{center}
$\begin{array}{c}
\mathcal{O}_{\{x_1F'_4,x_2x_3x_4\}}=\mathcal{O}_{\{x_1F'_4,\beta*(x_2x_3x_4)\}}= \mathcal{O}_{\{x_1F'_4,\beta^2*(x_2x_3x_4)\}}= \\
=\mathcal{O}_{\{x_1F'_4,\beta^3*(x_2x_3x_4)\}}= \mathcal{O}_{\{x_1F'_4,\beta^4*(x_2x_3x_4)\}}, \mbox{ and }\\ \mathcal{O}_{\{x_1F'_4,x_2x_3x_5\}}=\mathcal{O}_{\{x_1F'_4,\beta*(x_2x_3x_5)\}}=\mathcal{O}_{\{x_1F'_4,\beta^2*(x_2x_3x_5)\}}=\\ =\mathcal{O}_{\{x_1F'_4,\beta^3*(x_2x_3x_5)\}}=\mathcal{O}_{\{x_1F'_4,\beta^4*(x_2x_3x_5)\}}.
\end{array}$
\end{center}
The possibilities for $m_4$ are $x_2x_3x_4$ and $x_2x_3x_5$. The associated clutters are:

\medskip

\begin{figure}[!hptb] 
\captionsetup{justification=centering}
\centering
\hspace{-.3cm}
\begin{minipage}[h]{0.29 \linewidth}
\centering
\includegraphics[height=.7in]{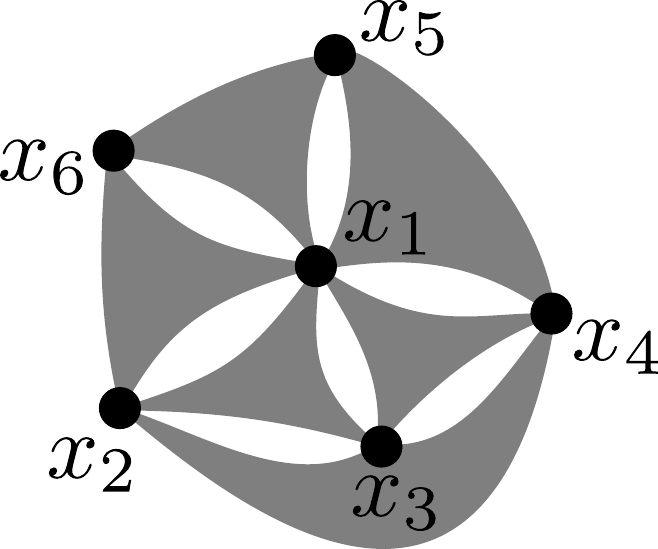}\\
\caption{Clutter $F_{19}$ \\ $x_1F'_4,x_2x_3x_4$}
\end{minipage}
\hspace{1cm}
\begin{minipage}[h]{0.29 \linewidth}
\centering
\includegraphics[height=.7in]{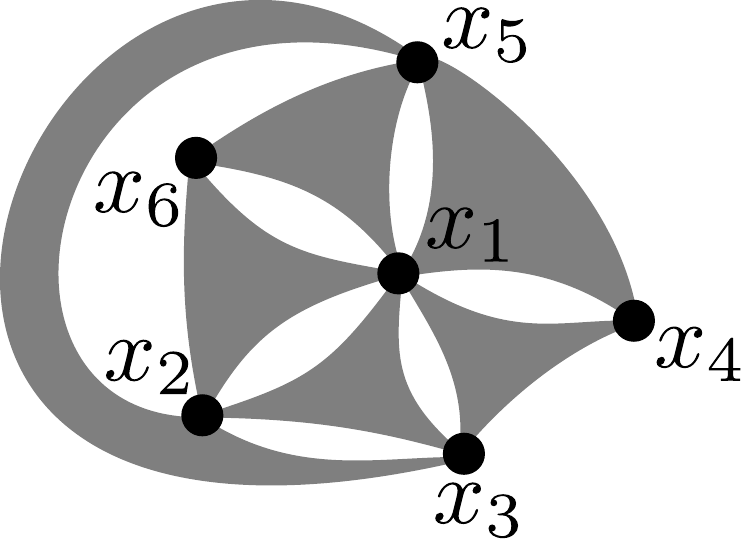}\\
\caption{Clutter $F_{20}$ \\ $x_1F'_4,x_2x_3x_5$}
\end{minipage}
\end{figure} 

\medskip

It is easy to check that these twenty sets define non-equivalent Cremona maps. Indeed, since $F_1, F_2, F_3, F_6, F_7, F_{16}$ are the only with its incidence sequence, they are non-equivalent. Moreover, they are non-equivalent to any other $F_i$ by Lemma \ref{sequencia de grau}.

By Proposition \ref{prop:d3n6t1},  there are four  non equivalent square free Cremona sets of degree $ 3$
with incidence degree $(4,4,4,3,2,1)$.  So, by the Lemma \ref{dual}, four is the number of  Cremona sets  with incidence degree $(5,4,3,2,2,2)$. Since there exists only four  Cremona sets with incidence degree $(5,4,3,2,2,2)$, namely $F_5,F_{10},F_{14}$ and $F_{15}$, they are non-equivalent.  

With the same argument we can prove that  $ F_{11}, F_{17}, F_{18}, F_{19}$ and $F_{20}$ are the only five Cremona sets with incidence degree $(5,3,3,3,2,2)$.

%
The following matrix gives the maximal cones in the remaining cases.
\begin{center}
\begin{tabular}{|c|c|c|c|}
  \hline
  MAXIMAL CONES & \,\, $x_1F'_1$ \,\,  &  \,\, $x_1F'_2$  \,\,& \,\,  $x_1F'_3$ \,\,  \\
  \hline
   $(5,4,3,3,2,1)$  & $F_4$   &  $F_8$, $F_9$   &$F_{12}, \, F_{13}$  \\
\hline
 \end{tabular}
\end{center}
The only possible equivalence could be between $F_8, F_9$ and between $F_{12}, F_{13}$, but it is not the case as one can check directly. 
For instance, its Newton dual has nonequivalent maximal cones. 

\end{proof}

\begin{prop} \label{prop:d3n6t3}
There are $10$ equivalence classes of square free monomial Cremona sets of degree $3$ in
$\K[x_1, \ldots, x_6]$ of type $3$.
\end{prop}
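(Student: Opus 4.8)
The plan is to treat the type-3 case by the same strategy used for types 1 and 2, but now the absence of both a leaf and a root means we cannot reduce to a lower-dimensional Cremona set by DLP or PRP. Instead, the starting point is Remark \ref{rmk:3types}: since $A_F$ cannot be doubly stochastic and is not of type 1 or 2, the incidence sequence must have every entry in $\{2,3,4\}$, with at least one entry equal to $4$ and summing to $nd = 18$. First I would enumerate the admissible incidence sequences under these constraints: with six entries from $\{2,3,4\}$ summing to $18$ and at least one $4$, the possibilities are $(4,4,4,2,2,2)$, $(4,4,3,3,2,2)$, $(4,4,2,2,...)$ — more precisely I would list them carefully, e.g. $(4,4,4,2,2,2)$, $(4,4,3,3,2,2)$, $(4,3,3,3,3,2)$, $(4,4,4,3,2,...)$ is excluded since that forces a $1$ somewhere only if — no, $(4,4,4,3,2,?)$ needs the last to be $1$, excluded; so the feasible sequences with max exactly $4$ and min exactly $2$ are $(4,4,4,2,2,2)$, $(4,4,3,3,2,2)$, $(4,3,3,3,3,2)$.

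For each admissible incidence sequence I would build the clutter combinatorially: a $6$-clutter with six edges of size $3$, no vertex in all edges (no root), no vertex in just one edge (no leaf), realizing the prescribed incidence degrees. The key structural input is Lemma \ref{mdc}: among any four of the six cubic monomials, two share a gcd of degree $2$, i.e. two of the corresponding edges meet in two vertices. This strongly restricts how the edges can be arranged and rules out the ``spread-out'' configuration appearing in the proof of that lemma. I would use it, together with the canonical restrictions and the incidence data, to pin down the finitely many candidate clutters up to the $S_6$-action, applying Lemma \ref{isomorfas} (stabilizer reductions), Lemma \ref{sequencia de grau} (incidence sequence is an invariant), Lemma \ref{cone} (maximal cones are an invariant) and Lemma \ref{dual} to eliminate redundant orbits and to match up duals. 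For each surviving candidate I would then compute $\det A_F$ and keep exactly those with $\det A_F = \pm 3$, invoking the DPB, Proposition \ref{prop:DPB}.

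Finally I would assemble the count. A useful cross-check is the Duality Principle, Proposition \ref{prop:duality}, combined with Lemma \ref{dual}: type 3 is self-dual as a class (the dual of a leafless, rootless $3$-clutter in six variables is again leafless and rootless, since a leaf dualizes to a root and vice versa), so the type-3 Cremona sets come in dual pairs or are self-dual, and the total must be consistent with that parity; moreover the incidence sequence $(a_1,\dots,a_6)$ dualizes to $(6-a_6,\dots,6-a_1)$, which permutes the admissible list above among itself, giving further internal checks. Nonequivalence of the ten resulting clutters would be certified by the combination of incidence sequence, multiset of maximal cones (Lemma \ref{cone}), and, where those coincide, by passing to the Newton dual (Lemma \ref{dual}) and comparing there, exactly as in the proofs of Propositions \ref{prop:d3n6t1} and \ref{prop:d3n6t2}.

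The main obstacle I expect is the combinatorial enumeration of leafless, rootless $3$-clutters: without a leaf or root to strip off there is no inductive reduction, so one must directly classify the edge configurations realizing each incidence sequence, and Lemma \ref{mdc} must be exploited cleverly enough to keep the case analysis finite and manageable rather than exploding; distinguishing genuinely inequivalent clutters with the same incidence sequence (where one is forced onto the maximal-cone and Newton-dual invariants) is the delicate bookkeeping part, and verifying $\det A_F = \pm 3$ for each candidate is the routine but unavoidable final filter.
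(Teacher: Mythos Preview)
Your outline is sound and the tools you invoke are exactly the right ones, but your organizing principle differs from the paper's. You propose to stratify by the three admissible incidence sequences $(4,4,4,2,2,2)$, $(4,4,3,3,2,2)$, $(4,3,3,3,3,2)$ and then enumerate clutters within each stratum. The paper instead fixes a vertex $x_1$ of incidence degree $4$ (which exists in type~3) and looks at the cone $C$ of the four monomials containing $x_1$; Lemma~\ref{mdc} is then used not as a vague global constraint but concretely to normalize two of these four monomials to $x_1x_2x_3$ and $x_1x_2x_4$. The base of $C$ is thus a simple graph with four edges on $\{x_2,\dots,x_6\}$ containing the path $x_3$--$x_2$--$x_4$, and there are only six such graphs up to isomorphism. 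Three of them force $\det A_F=0$ regardless of the remaining two monomials $h_1,h_2$, and for each of the surviving three the determinant condition $\det A_F=\pm 3$ becomes a simple linear constraint on the exponents of $h_1,h_2$, yielding $5+2+8=15$ candidates, which five explicit isomorphisms collapse to $10$.

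What the paper's route buys is a sharp, small branching at the outset (six base graphs, then three) and an explicit determinant formula for each branch, so the Cremona condition is applied \emph{before} the full clutter is enumerated. Your incidence-sequence route would eventually reach the same $10$ classes, but the step you flag as the ``main obstacle'' --- classifying all clutters realizing each sequence --- is genuinely larger without the cone reduction, and you have not said concretely how Lemma~\ref{mdc} would prune it. Your duality cross-check is a nice addition not made explicit in the paper (note, though, that all three of your sequences are self-dual under $a_i\mapsto 6-a_i$, so duality does not separate the strata). If you want to turn your sketch into an actual proof, the missing ingredient is precisely the cone-based normalization the paper uses; once you adopt it, your enumeration collapses to theirs.
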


\begin{proof} After a possible reorder of the monomials and relabel of the variables, using Lemma \ref{mdc}, since $A_F$ does not have a row with five 
entries $1$, $F$ contains a subclutter having a maximal cone of the form $C=\{x_1x_2x_3,x_1x_2x_4,x_1g_1, x_1g_2\},$
where $x_1 \nmid g_1,g_2$ and the base of $C$ is a simple graph having $4$ edges and having at most $5$ vertices belonging to $\{x_2,\ldots,x_6\}$. 
There are six of such graphs up to isomorphism:

\begin{figure}[!h]
\captionsetup{justification=centering}
\hspace{0.6in}
\begin{minipage}[h]{0.15 \linewidth}
\begin{center}
\includegraphics[height=.55in]{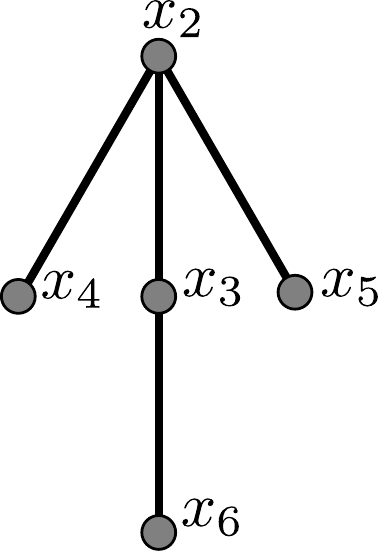}\\
\caption{\\ Base of $C_1$}
\end{center}
\end{minipage}
\hspace{0.6in}
\begin{minipage}[h]{0.18 \linewidth}
\begin{center}

\captionsetup{justification=centering}
\includegraphics[height=0.4in]{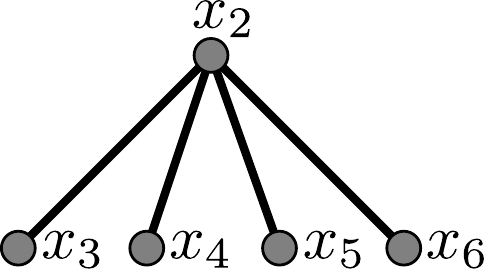}\\
\vspace{0.2in}
\caption{\\\hspace{-.1cm}Base of $C_2$}
\end{center}
\end{minipage}
\hspace{0.15in}
\begin{minipage}[h]{0.15 \linewidth}
\begin{center}
\includegraphics[height=.6in]{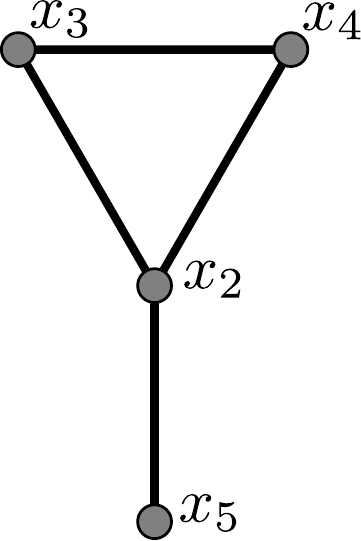}\\
\caption{\\Base of $C_3$}
\end{center}
\end{minipage}
\end{figure}

\vspace{-0.5cm}

\begin{figure}[!h]
\captionsetup{justification=centering}
\hspace{0.6in}
\begin{minipage}[h]{0.15 \linewidth}
\begin{center}
\includegraphics[height=0.55in]{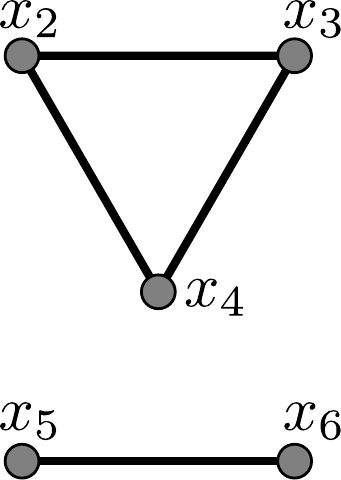}\\
\caption{\\ Base of $C_4$}
\end{center}
\end{minipage}
\hspace{0.6in}
\begin{minipage}[h]{0.15 \linewidth}
\begin{center}
\includegraphics[height=0.55in]{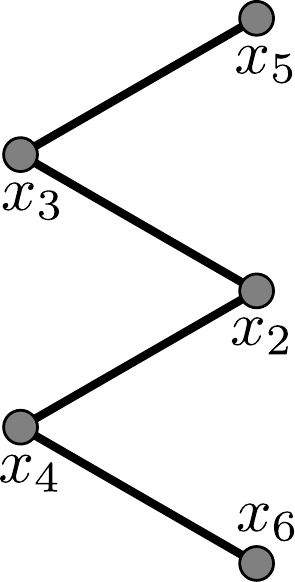}\\
\caption{\\ Base of $C_5$}
\end{center}
\end{minipage}
\hspace{0.6in}
\begin{minipage}[h]{.17 \linewidth}
\begin{center}
\includegraphics[height=0.55in]{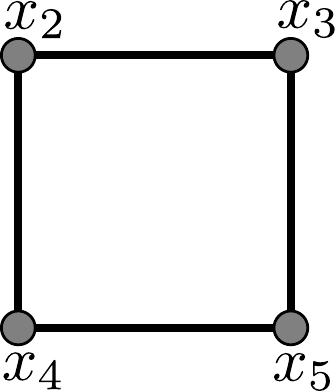}\\
\caption{\\Base of $C_6$}
\end{center}
\end{minipage}
\end{figure}
\medskip

\medskip

Consider $F_i=\{C_i, h_1,h_2\}$ for $i=1 \ldots 6$, where $h_1=x_2^{\alpha_2}x_3^{\alpha_3}x_4^{\alpha_4}x_5^{\alpha_5}x_6^{\alpha_6}$, $h_2=x_2^{\beta_2}x_3^{\beta_3}x_4^{\beta_4}x_5^{\beta_5}x_6^{\beta_6}$, with $\alpha_i,\beta_j\in\{0,1\}.$ One can check that $\det(A_{F_i})=0$ for $i=2,3,6$. 
Therefore, if $F$ is a Cremona set in the hypothesis of the proposition, then either $F=F_1$ or $F=F_4$ or $F=F_5$.

 By elementary operations on the rows of $A_{F_1}$, its possible to show that 
$$\text{det}(A_{F_1})=\pm 3 \Leftrightarrow \alpha_2+\alpha_6=1 \text{ and } \beta_2=\beta_6.$$ 
By using that the stabilizer of $C_1$ is generated by $(4,5)$ and taking account only Cremona sets of type $3$ we determine $5$ square free monomial  sets of degree $3$ having $C_1$ as maximal cone. The associated clutters are:

\begin{figure}[h!] 
\captionsetup{justification=centering}
\centering
\hspace{-.3cm}
\begin{minipage}[h]{0.34 \linewidth}
\centering
\includegraphics[height=.8in]{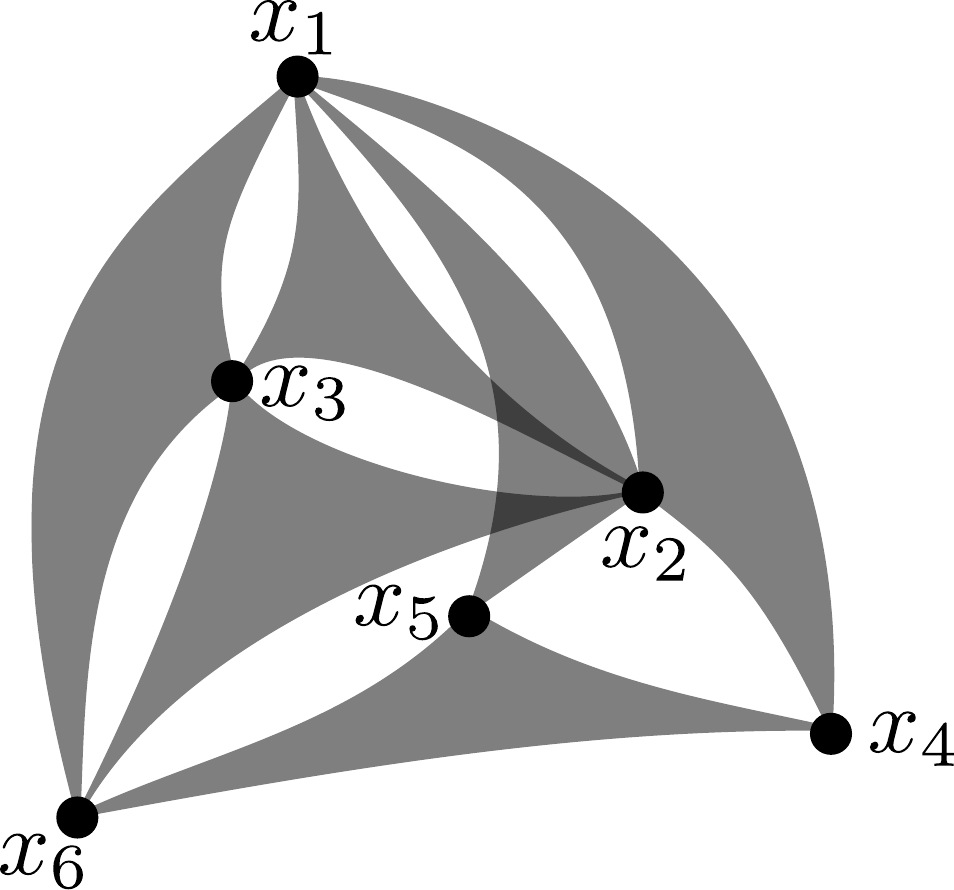}\\
\caption{Clutter $H_1$ \\ $C_1,x_2x_3x_6, x_4x_5x_6$}
\end{minipage}
\hspace{0.1cm}
\begin{minipage}[h]{0.29 \linewidth}
\centering
\includegraphics[height=.8in]{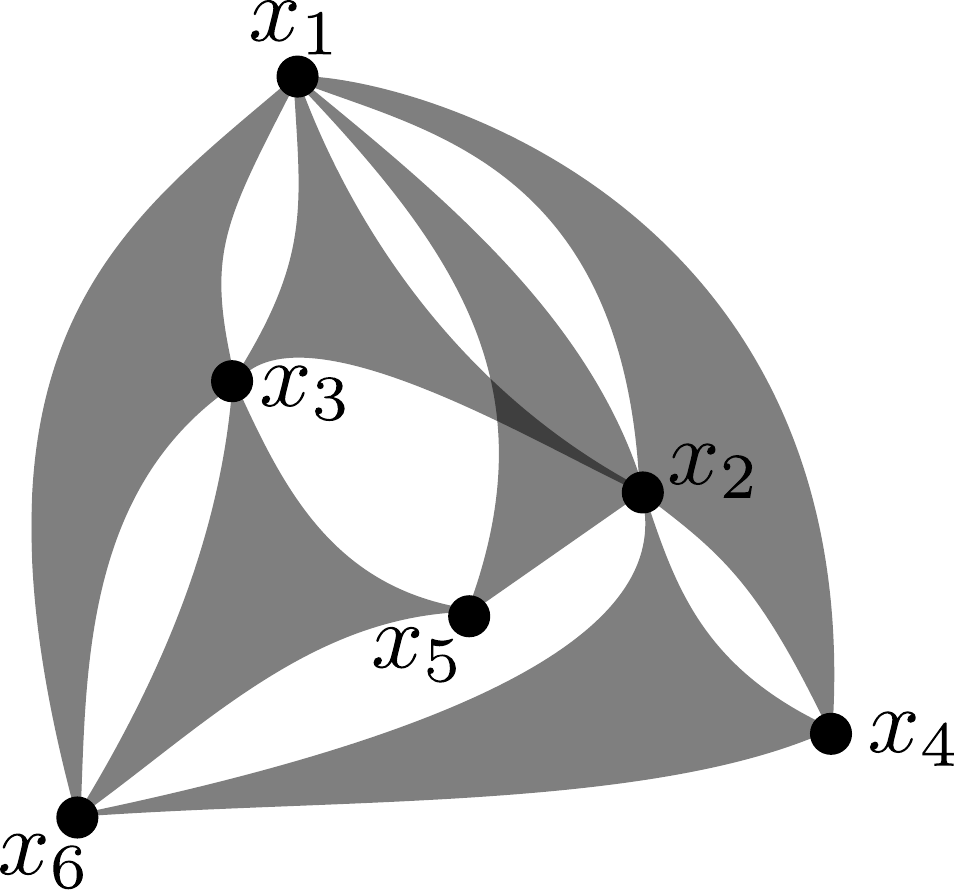}\\
\caption{Clutter $H_2$ \\ $C_1,x_2x_4x_6, x_3x_5x_6$}
\end{minipage}
\hspace{.5cm}
\begin{minipage}[h]{0.29 \linewidth}
\centering
\includegraphics[height=.8in]{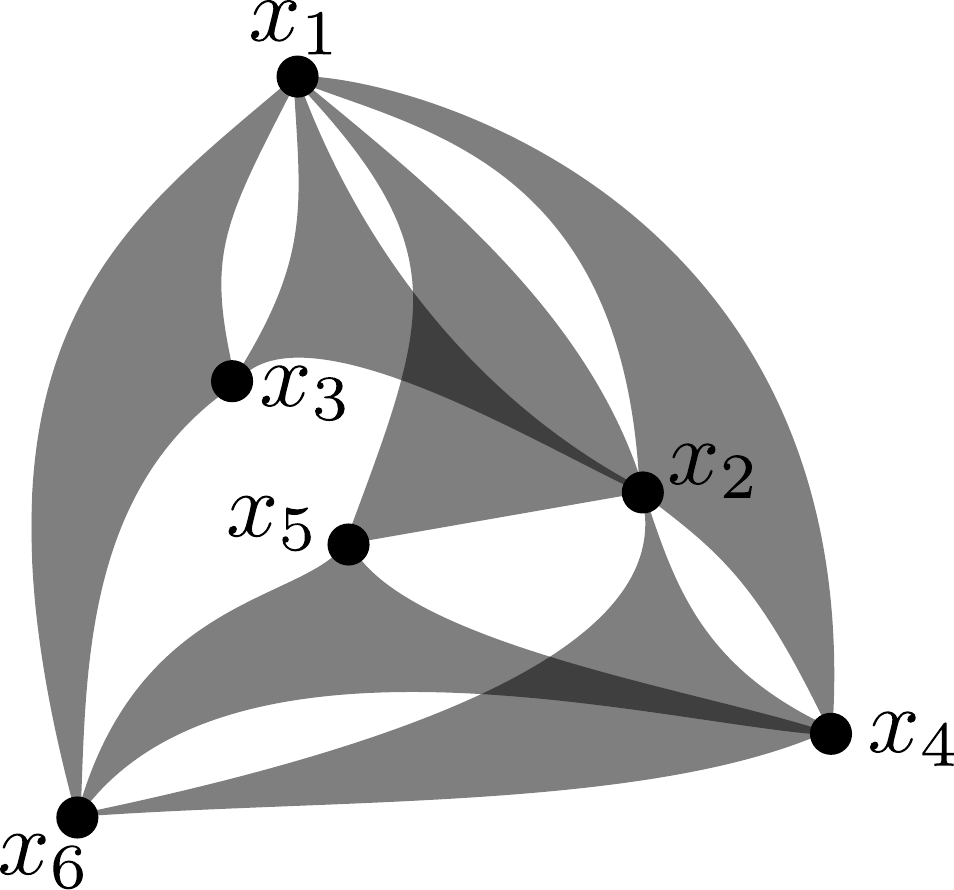}\\
\caption{Clutter $H_3$ \\ $C_1,x_2x_4x_6, x_4x_5x_6$}
\end{minipage}
\end{figure} 


\vspace{-0.5cm}

\begin{figure}[h!] 
\captionsetup{justification=centering}
\centering
\hspace{-.3cm}
\begin{minipage}[h]{0.33 \linewidth}
\centering
\includegraphics[height=.8in]{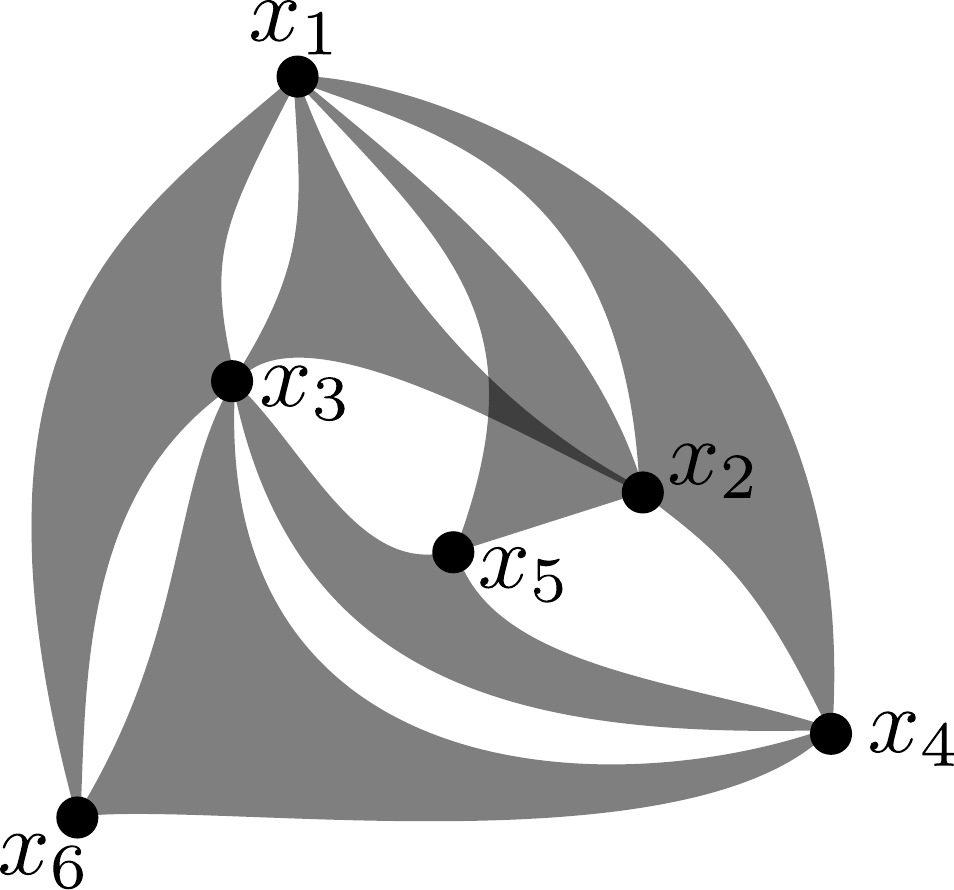}\\
\caption{Clutter $H_4$ \\  $C_1,x_3x_4x_5, x_3x_4x_6$}
\end{minipage}
\hspace{1cm}
\begin{minipage}[h]{0.29 \linewidth}
\centering
\includegraphics[height=.8in]{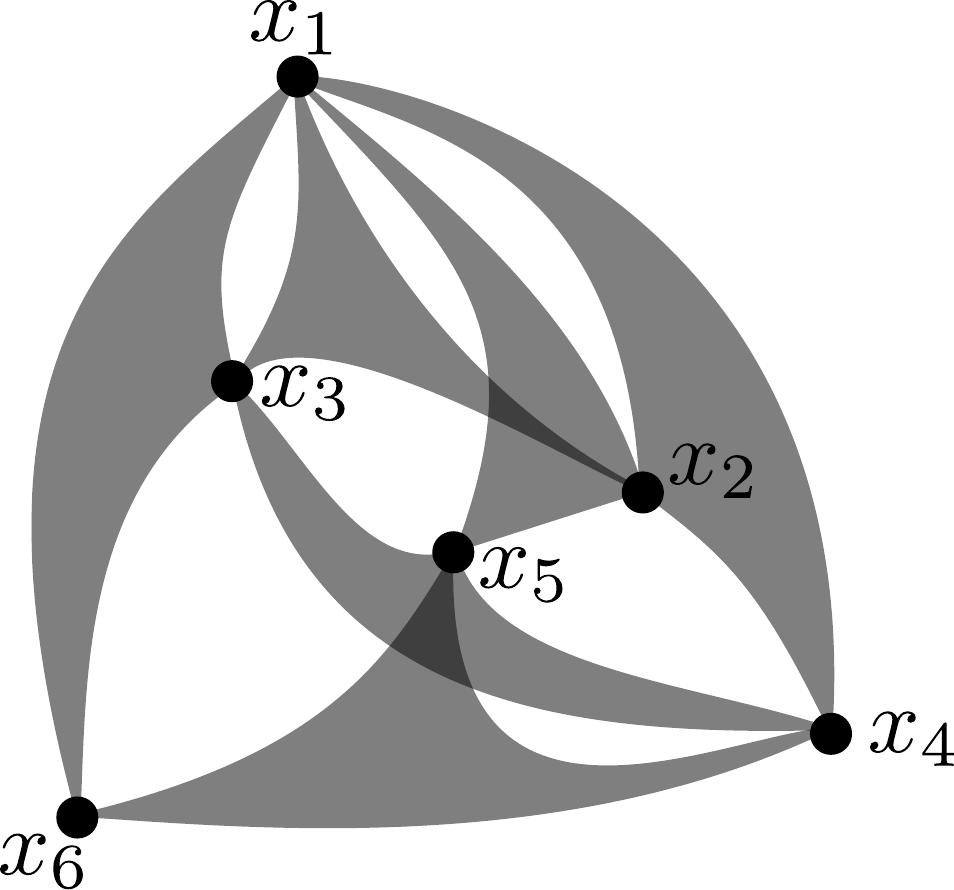}\\
\caption{Clutter $H_5$ \\ $C_1,x_3x_4x_5, x_4x_5x_6$}
\end{minipage}
\end{figure}


Analogously to the previous case, its possible to show that  
$$\text{det}(A_{F_4})=\pm 3 \Leftrightarrow \alpha_5=\alpha_6 \text{ and } \beta_5+\beta_6=1.$$ 
By using that the stabilizer of $C_4$ is generated by $\beta=(2,3)$, $\gamma=(3,4)$, $\delta=(2,4)$ and $\epsilon=(5,6)$,  we have two possible Cremona sets, whose clutters are:

\begin{figure}[h!] 
\captionsetup{justification=centering}
\centering
\hspace{-.3cm}
\begin{minipage}[h]{0.29 \linewidth}
\centering
\includegraphics[height=.7in]{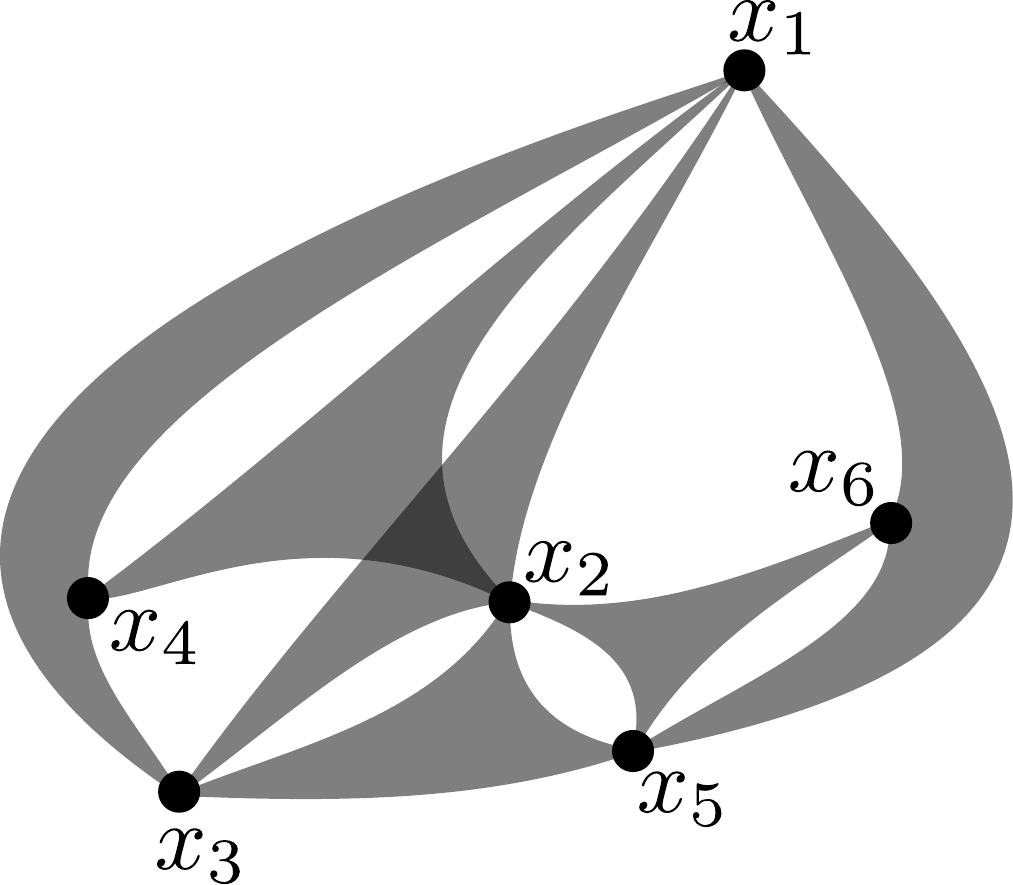}\\
\caption{Clutter $H_6$ \\ $C_4,x_2x_3x_5, x_2x_5x_6$}
\end{minipage}
\hspace{1cm}
\begin{minipage}[h]{0.29 \linewidth}
\centering
\includegraphics[height=.7in]{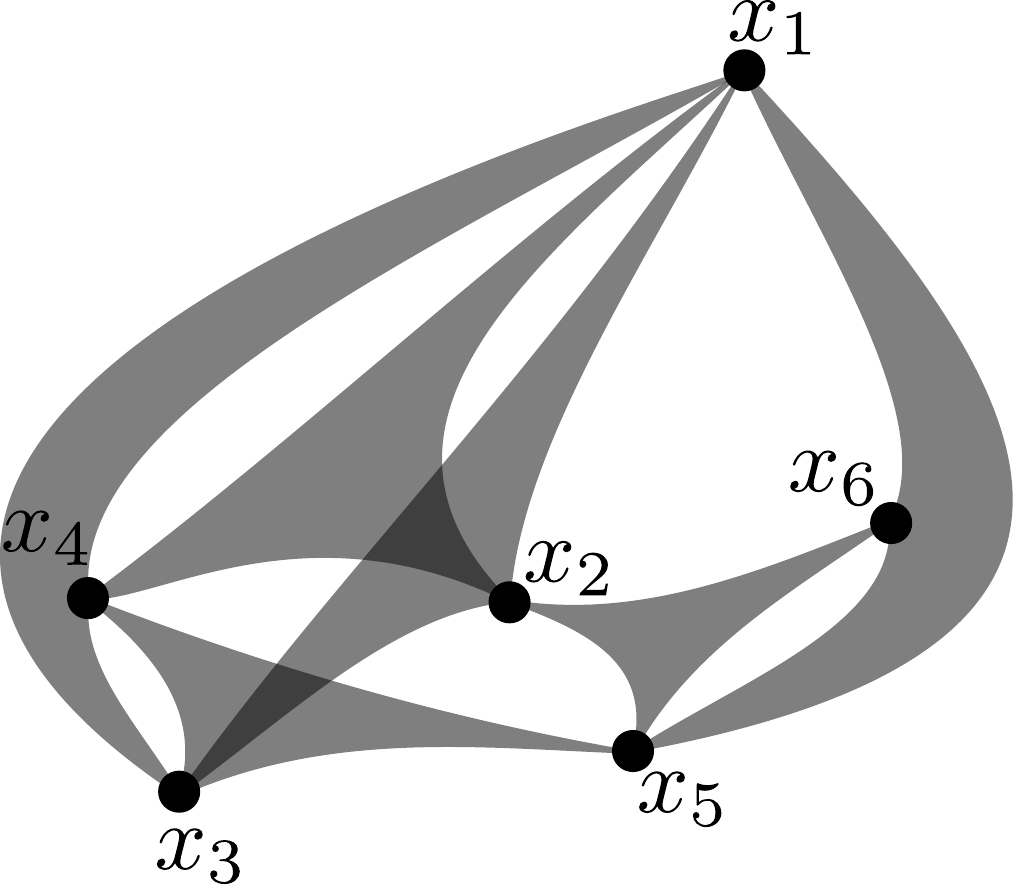}\\
\caption{Clutter $H_7$ \\ $C_4,x_2x_5x_6, x_3x_4x_5$}
\end{minipage}
\end{figure}


\medskip

For the last case, we have that
$$\text{det}(A_{F_5})=\pm 3 \Leftrightarrow \alpha_3=\alpha_4 \text{ and } \beta_3+\beta_4=1.$$ 
By using that the stabilizer of $C_5$ is generated by $\beta=(3,4)(5,6)$, we obtain $6$ Cremona sets. The associated clutters are the following ones:

\begin{figure}[h!] 
\captionsetup{justification=centering}
\centering
\hspace{0cm}
\begin{minipage}[h]{0.29 \linewidth}
\centering
\includegraphics[height=.7in]{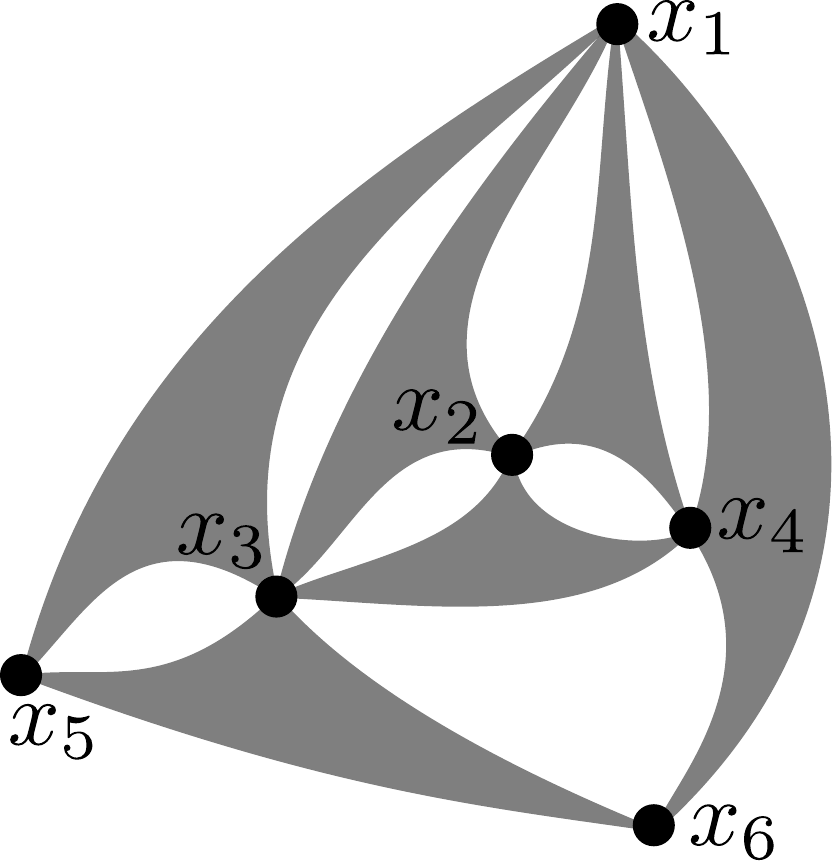}\\
\caption{Clutter $H_8$ \\ $C_5,x_2x_3x_4, x_3x_5x_6$}
\end{minipage}
\hspace{0cm}
\begin{minipage}[h]{0.29 \linewidth}
\centering
\includegraphics[height=.7in]{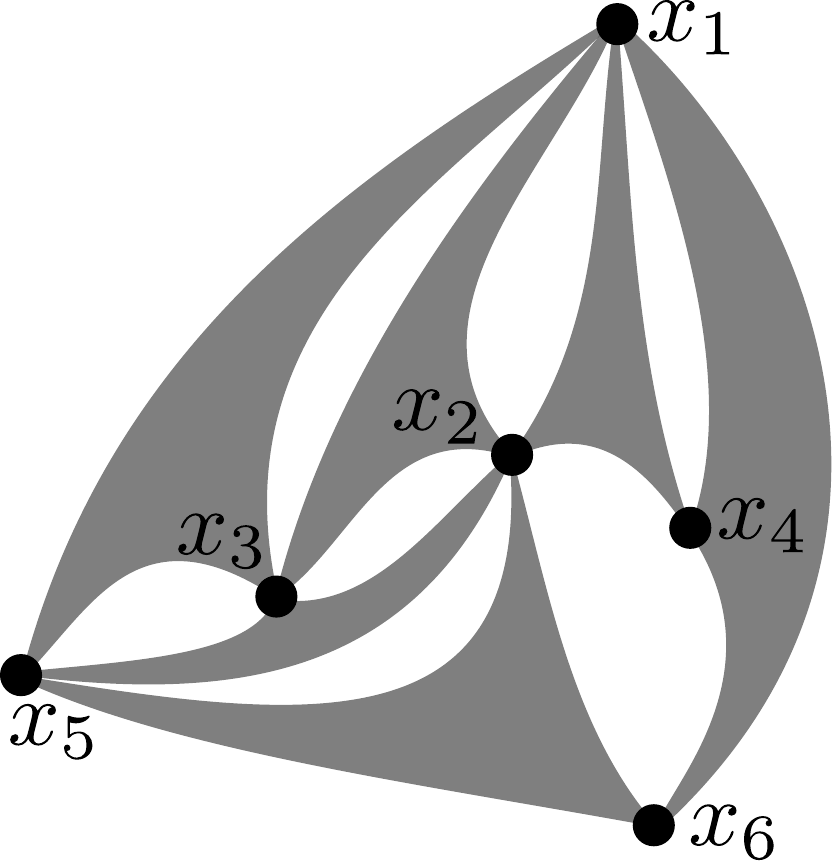}\\
\caption{Clutter $H_9$ \\ $C_5,x_2x_5x_6, x_2x_3x_5$}
\end{minipage}
\hspace{0cm}
\begin{minipage}[h]{0.29 \linewidth}
\centering
\includegraphics[height=.7in]{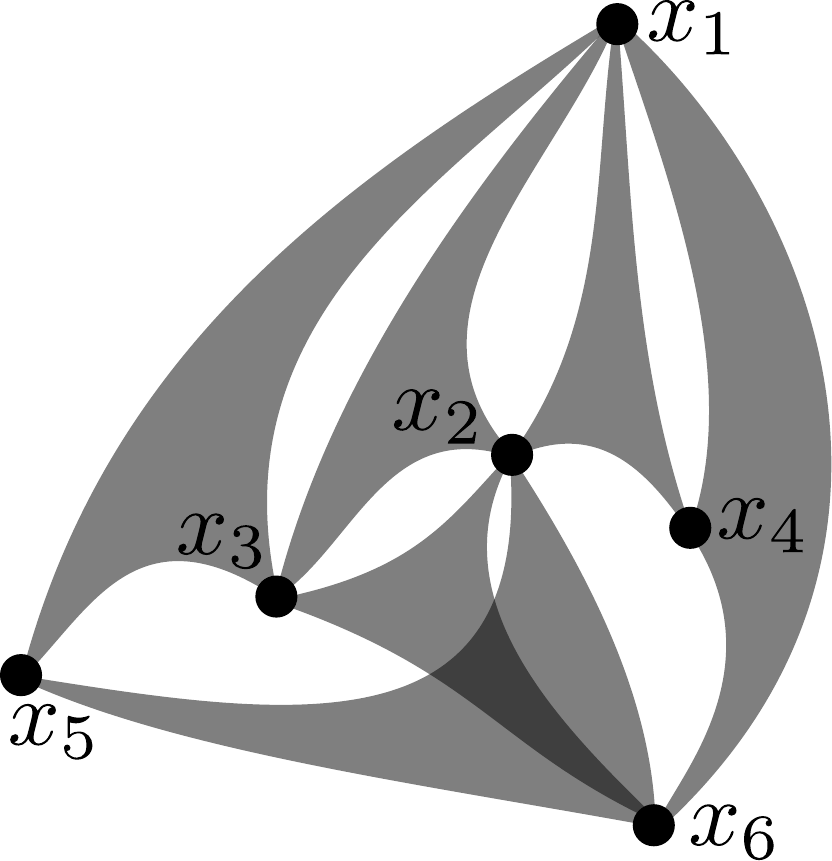}\\
\caption{Clutter $H_{10}$ \\ $C_5,x_2x_5x_6, x_2x_3x_6$}
\end{minipage}
\end{figure} 


\medskip
\begin{figure}[h!] 
\captionsetup{justification=centering}
\centering
\hspace{0cm}
\begin{minipage}[h]{0.29 \linewidth}
\centering
\includegraphics[height=.7in]{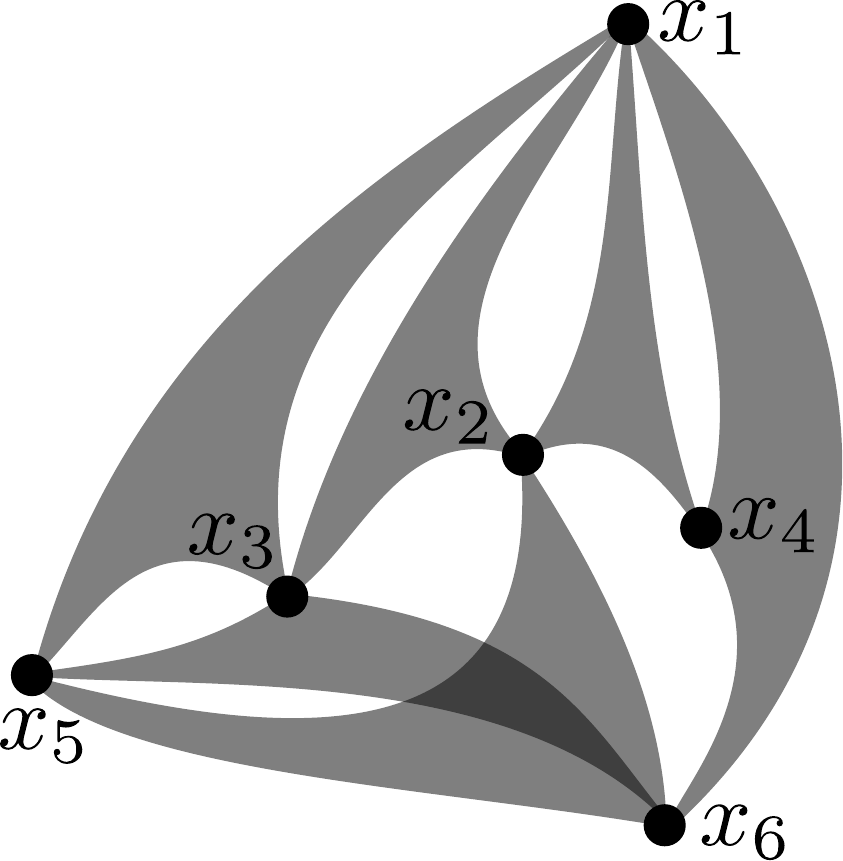}\\
\caption{Clutter $H_{11}$ \\ $C_5,x_2x_5x_6, x_3x_5x_6$}
\end{minipage}
\hspace{0cm}
\begin{minipage}[h]{0.29 \linewidth}
\centering
\includegraphics[height=.7in]{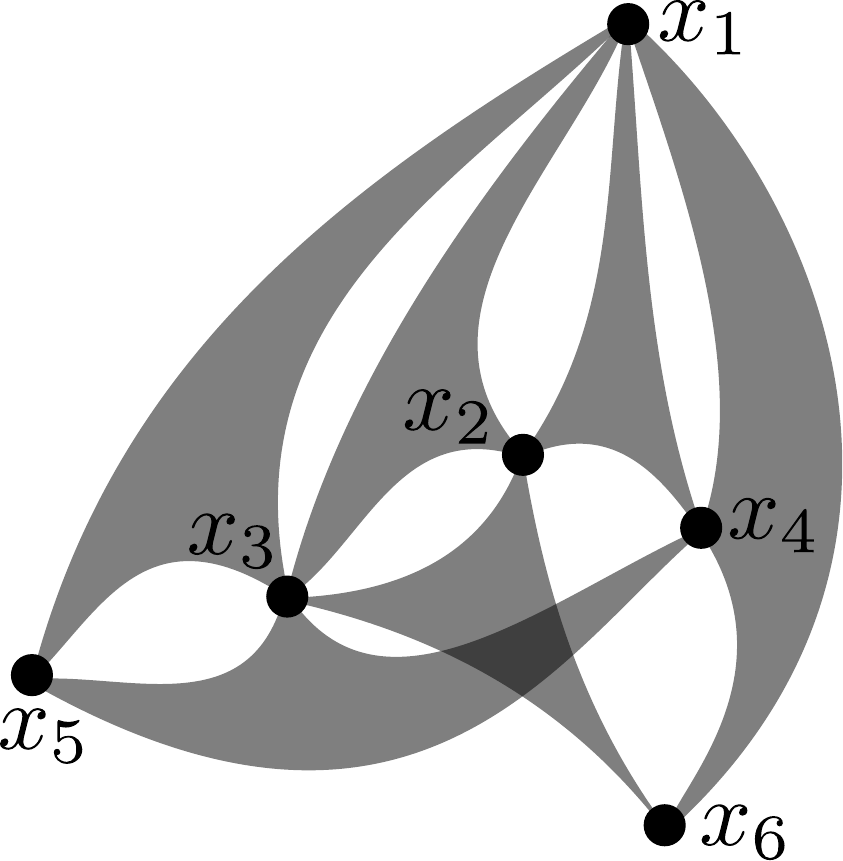}\\
\caption{Clutter $H_{12}$ \\ $C_5,x_3x_4x_5, x_2x_3x_6$}
\end{minipage}
\hspace{0cm}
\begin{minipage}[h]{0.29 \linewidth}
\centering
\includegraphics[height=.7in]{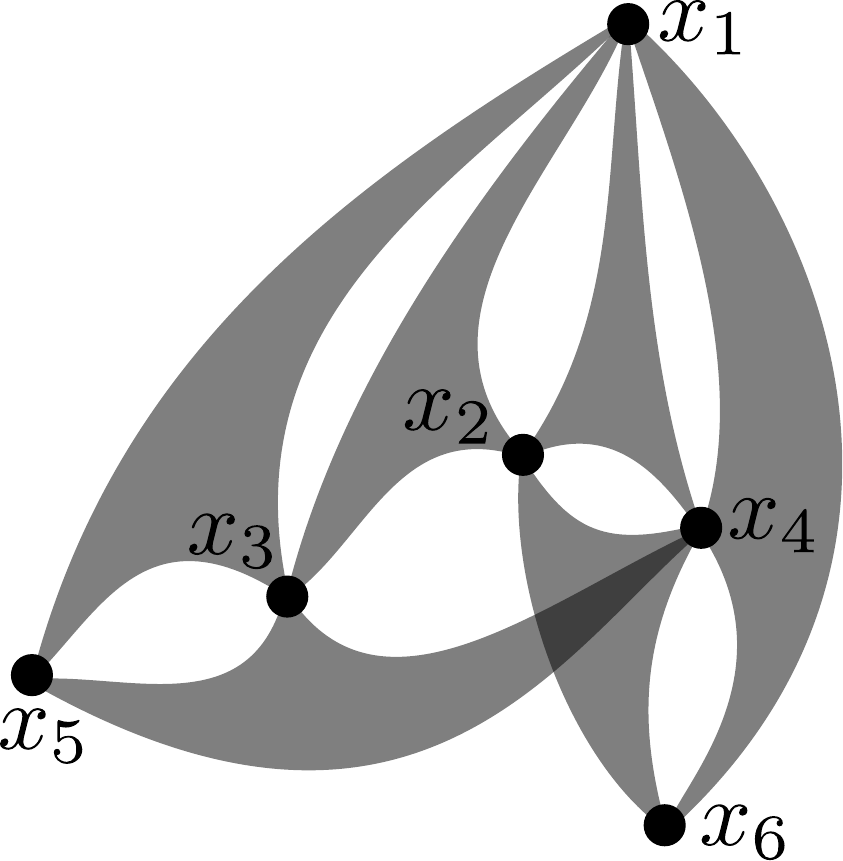}\\
\caption{Clutter $H_{13}$ \\ $C_5,x_3x_4x_5, x_2x_4x_6$}
\end{minipage}
\end{figure} 


\medskip

\begin{figure}[h!] 
\captionsetup{justification=centering}
\centering
\hspace{-.3cm}
\begin{minipage}[h]{0.29 \linewidth}
\centering
\includegraphics[height=.7in]{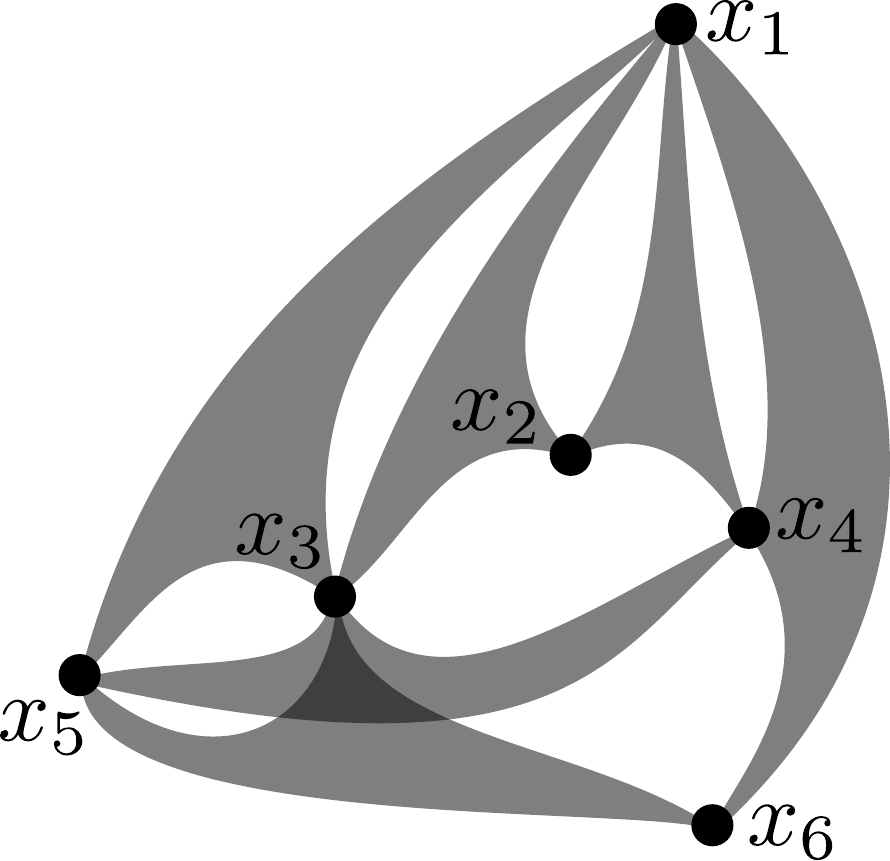}\\
\caption{Clutter $H_{14}$ \\ $C_5,x_3x_4x_5, x_3x_5x_6$}
\end{minipage}
\hspace{3cm}
\begin{minipage}[h]{0.29 \linewidth}
\centering
\includegraphics[height=.7in]{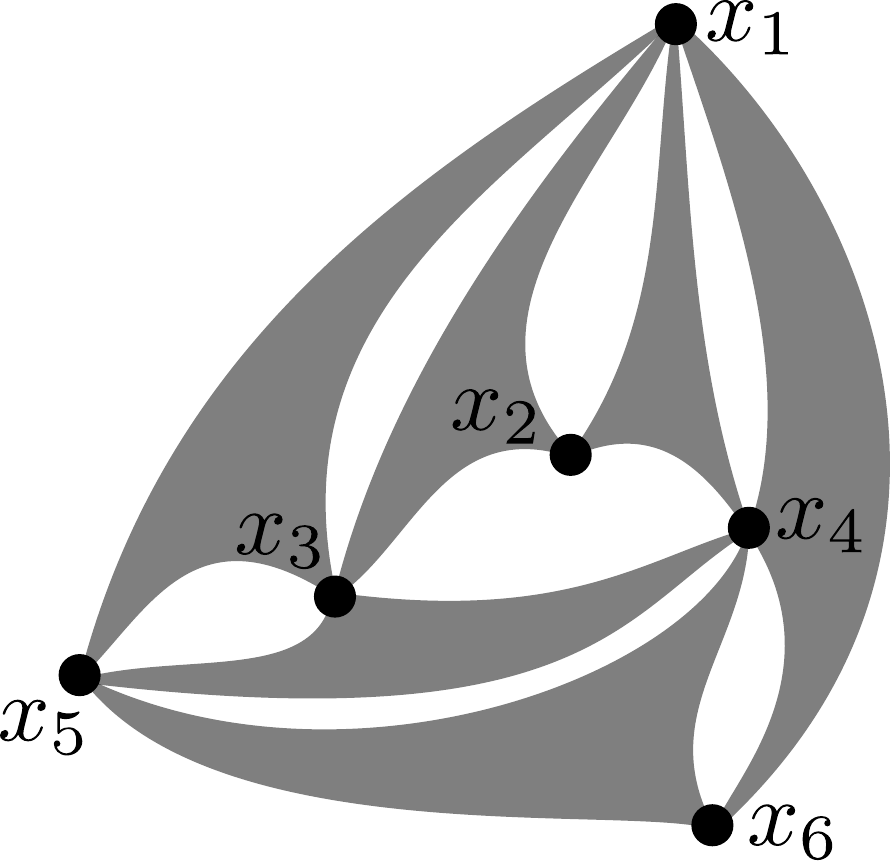}\\
\caption{Clutter $H_{15}$ \\ $C_5,x_3x_4x_5, x_4x_5x_6$}
\end{minipage}
\end{figure} 
\medskip

\vspace{0.5cm}
We have the following isomorphisms:
\begin{enumerate}
\item $(1,2)(3,4)*H_2=H_3$
\item $(1,3)(2,5,6)*H_4=H_{14}$
\item $(1,2,3)(4,6,5)*H_{10}=H_{12}$
\item $(1,2,3,5,6,4)*H_6=H_{13}$
\item $(1,3,2)(4,5)*H_8=H_9$
\end{enumerate}

Using the Lemma \ref{dual}, \ref{cone} and \ref{sequencia de grau}
one can easily check that $H_1$, $H_2$, $H_4$, $H_5$, $H_6$, $H_7$, $H_8$, $H_{10}$, $H_{11}$ and $H_{15}$ are non equivalent. The result follows. 
\end{proof}

\section{An algorithm}
In this section, we describe two pseudo-codes that allow us to determine the number of monomial square-free Cremona maps of degree $d$, $\Phi:\P^{n-1} \dashrightarrow \P^{n-1}$. We work with the exponent vectors instead of the monomials.

\subsection{The routine  build-next $\mathcal{M}^{i}_{n,d}$}
We describe the routine \verb|build_next_i_M_n_d| that construct $\mathcal{M}^{i+1}_{n,d}$ from $\mathcal{M}^{i}_{n,d}$.
\begin{itemize}
 \item Input $\verb|(n,d,i,i_M_n_d)|$
 $\verb|i_M_n_d|$ is a list with the representatives of maximal rank of the orbits of the 
 natural action $\ast:  S_n \times \mathcal{M}^i_{n,d}   \rightarrow  \mathcal{M}^i_{n,d}$ defined in \ref{def_action}.
  \item If $i=1$\\
 return $\mathcal{M}_{n,d}$. 
  \item If $i> 1$\\
  Create a list $L$ indexed by $\mathcal{M}^i_{n,d}$ such that each entry of $L$ is a copy 
 of $\mathcal{M}_{n,d}$.\\
 For each $F \in \mathcal{M}^i_{n,d}$, as soon as $L_F \neq \emptyset$, take the first 
 element $f \in L_F$. By using Lemma \ref{isomorfas} to the pair ${F,f}$, elminate all 
 $g \in L_{F}$ such that $\mathcal{O}(F,f)=\mathcal{O}(F,g)$. In this way $L$ became a list such that all orbits of $\mathcal{M}^{i+1}_{n,d}$ are 
 $\{F,g_{F}\}_{F\in \mathcal{M}^i_{n,d}, g \in L_F}$.  
  \item Use the lists $\mathcal{M}^{i}_{n,d}$ and $L$ to create the smalest list, $temp(\mathcal{M}^{i}_{n,d})$, containing all $\{F,g_{F}\}_{F\in \mathcal{M}^i_{n,d}, g \in L_F}$.
 By Lemma \ref{processo} all representastives of orbits of $\mathcal{M}^{i+1}_{n,d}$ belong to  $temp(\mathcal{M}^{i}_{n,d})$. 
  \item By using Lemma \ref{sequencia de grau}, classify these representatives according to their sequences of incidence degree, putting each class on a list which is the output.   \end{itemize}
 \subsection{ The routine refresh $temp(\mathcal{M}^{i}_{n,d})$} 
 
 We describe \verb|refresh_temp_i_M_n_d|, the routine that removes distinct representatives of the same orbit in order to 
 construct the smalest $temp(\mathcal{M}^{i}_{n,d})$. 
  \begin{itemize}
  \item Input A list of Log matrices $LOG$;
  \item While $LOG$ is not empty, take the first element $LOG[0]$ of $LOG$ compute the orbit $\mathcal{O}(LOG[0])$ and return $LOG \setminus \mathcal{O}(LOG[0])$.
 \end{itemize}

\begin{figure}[h!]
\centering
\includegraphics[width=8cm]{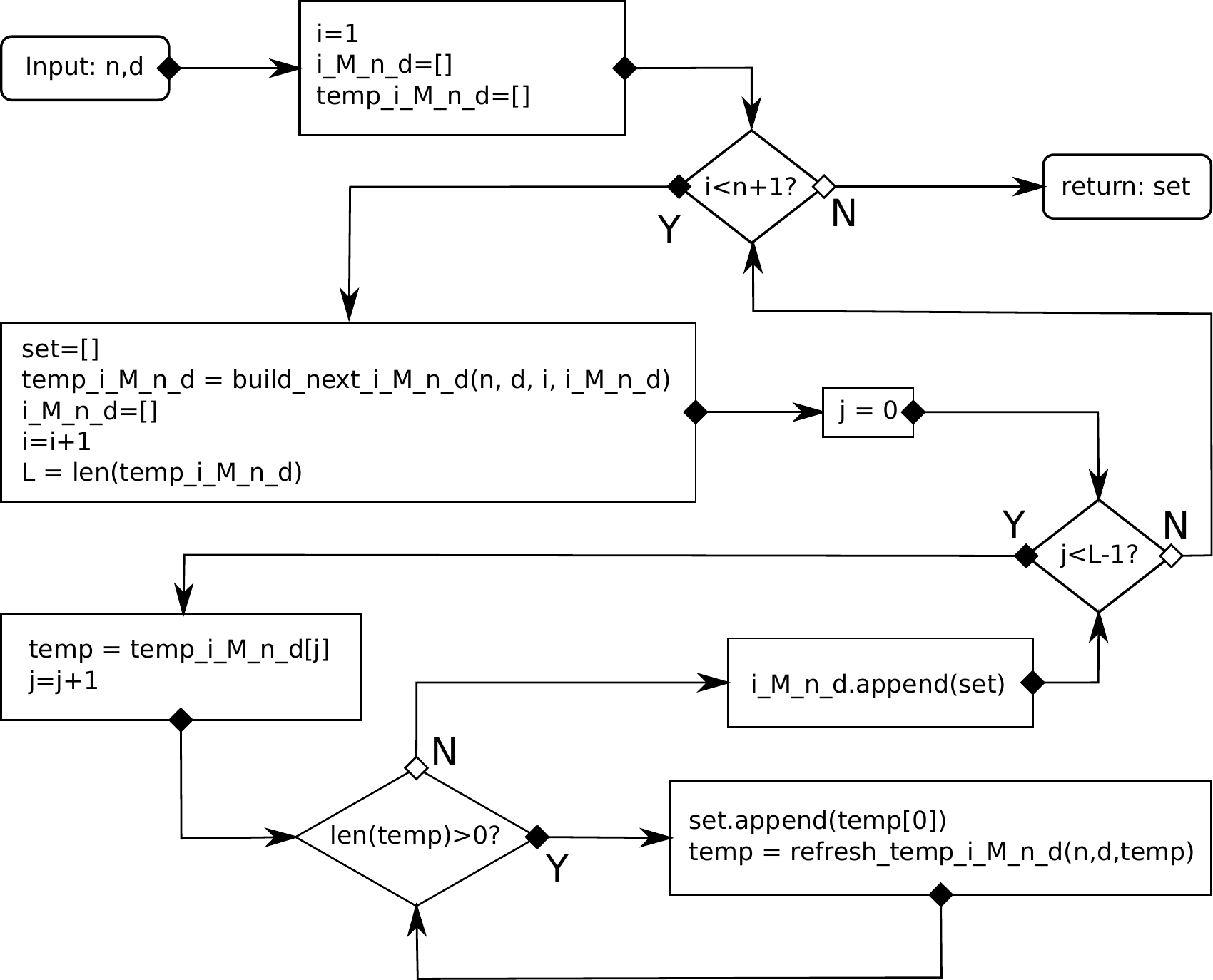}
\caption{The fluxogram}
\end{figure}

The algorithm has been implemented in the Sagemath software. The routine   is available in the website
\url{<https://github.com/ricardonmachado/Cremona>}. The algorithm counts the number of equivalence classes of square free Cremona Monomials of degree $d$ in $\K[x_1,...,x_n]$. We run it for some  choices of $n$ e $d$.  
\begin{center}
\begin{tabular}{|c|c|c|}
\hline
n&d&Number of Cremonas\\
\hline
4 &2& 1\\
\hline
5& 2 &4\\
\hline
6& 2 &8\\
\hline
6 &3 &40\\
\hline
7 &2 &23\\
\hline
7 &3 &674\\
\hline
\end{tabular}
\end{center}

To run the case $n = 7$ and $d = 3$, we used a computer with a 3.4 Ghz I7 processor and 24Gb ram, and it took about $10$ days.

\vspace{0.5cm}


\bibliography{sn-bibliography}


\end{document}